\newtheorem{theorem}{Theorem}
\newtheorem{lemma}[theorem]{Lemma}
\newtheorem{proposition}[theorem]{Proposition}
\theoremstyle{definition}
\newtheorem{definition}[theorem]{Definition}
\newtheorem{notation}[theorem]{Notation}
\newtheorem{remark}[theorem]{Remark}
\numberwithin{theorem}{section}
\newcommand{\bC}{\mathbb{C}}
\newcommand{\bZ}{\mathbb{Z}}
\newcommand{\tC}{{\widetilde{\mathbb{C}}}}
\newcommand{\ttau}{\tilde{\tau}}
\newcommand{\cA}{\mathcal{A}}
\newcommand{\cB}{\mathcal{B}}
\newcommand{\cL}{\mathcal{L}}
\newcommand{\tA}{\tilde{\cA}}
\newcommand{\cP}{\mathcal{P}}
\newcommand{\cU}{\mathcal{U}}
\let\phi=\varphi
\newcommand{\centre}[1]{\hbox{$\mathop{#1}\limits^\circ$}}
\newcommand{\gO}{\Omega}
\newcommand{\gS}{\Sigma}
\newcommand{\sM}{{\scriptscriptstyle M}}
\newcommand{\sN}{{\scriptscriptstyle N}}
\newcommand{\st}[1]{{\scriptscriptstyle#1}}
\newcommand{\sone}{{\scriptscriptstyle 1}}
\newcommand{\sV}{{\scriptscriptstyle V}}
\newcommand{\sW}{{\scriptscriptstyle W}}
\newcommand{\NC}{\mathit{NC}}
\newcommand{\vm}{{\vec{m}}}
\newcommand{\sC}{{{\scriptscriptstyle (c)}}}
\newcommand{\spC}{{{\scriptscriptstyle\prime (c)}}}
\newcommand{\sz}{{\scriptscriptstyle0}}
\newcommand{\sep}{\mathit{\ sep.\ }}
\newcommand*{\vv}[1]{\vec{\mkern0mu#1}}
\makeatletter\newcommand{\hathat}[1]{%
\begingroup%
  \let\macc@kerna\z@%
  \let\macc@kernb\z@%
  \let\macc@nucleus\@empty%
  \skew{3.5}\widehat{\mathchoice%
    {\raisebox{.4ex}{\vphantom{\ensuremath{\displaystyle #1}}}}%
    {\raisebox{.4ex}{\vphantom{\ensuremath{\textstyle #1}}}}%
    {\raisebox{.16ex}{\vphantom{\ensuremath{\scriptstyle #1}}}}%
    {\raisebox{.14ex}{\vphantom{\ensuremath{\scriptscriptstyle #1}}}}%
    \smash{{\widehat{{#1}}}}}%
\endgroup%
} \makeatother
\newcommand{\peq}{{p \mskip 2mu\cdot_\epsilon q}}
\newcommand{\cov}{\mathrm{Cov}}
\newcommand{\rE} {\mathbb{E}}
\newcommand{\id}{\mathit{id}}
\newcommand{\rP}{\mathrm{P}}
\newcommand{\Tr}{\mathrm{Tr}}
\newcommand{\tr}{\mathrm{tr}}
\newcommand{\Wg}{\mathrm{Wg}}
\newcommand{\ab}{\allowbreak}
\newcommand{\ds}{\displaystyle}
\newcommand{\xx}[2]{\relax}
\newcommand{\thebottomline}{\renewcommand{\thefootnote}{}
  \renewcommand{\footnoterule}{}
  \phantom{M}\footnotetext{\tiny\textit{\jobname.tex}\hfill
    \textit{\noindent\romannumeral\day.%
\romannumeral\month.\expandafter\xx\romannumeral\year}}}
\renewcommand{\thefootnote}{(\arabic{footnote})}
\title[real infinitesimal freeness]%
{infinitesimal freeness for orthogonally\\[5pt]
invariant random matrices%
}
\author[cébron]{Guillaume Cébron}\address{Institut de
  Mathématiques de Toulouse; UMR5219; Université de
  Toulouse; CNRS; UPS, F-31062 Toulouse, France}
\email{guillaume.cebron@math.univ-toulouse.fr}
\thanks{G.C. is supported by the Project MESA
  (ANR-18-CE40-006) and by the Project STARS
  (ANR-20-CE40-0008) of the French National Research Agency
  (ANR)}
\author[mingo]{James A. Mingo} \address{Department of
  Mathematics and Statistics, Queen's University, Jeffery
  Hall, Kingston, Ontario, K7L 3N6, Canada}
\email{mingo@mast.queensu.ca} 
\thanks{J.A.M. is supported by a Discovery Grant from the
  Natural Sciences and Engineering Research Council of
  Canada}
\begin{document}

\begin{abstract}
We introduce a new kind of free independence, called real
infinitesimal freeness. We show that independent
orthogonally invariant with infinitesimal laws are
asymptotically real infinitesimally free. We introduce new
cumulants, called real infinitesimal cumulants and show that
real infinitesimal freeness is equivalent to vanishing of
mixed cumulants. We prove the formula for cumulants with
products as entries.
\end{abstract}

\maketitle


\section{Introduction and Statement of Results}

The first result in the asymptotic theory of random matrices
was Wigner's semicircle law which gave us the limit
eigenvalue distribution of a Wigner matrix, and in
particular a GUE random matrix. Later Voiculescu,
\cite{voi}, showed that independent GUE random matrices are
asymptotically free. The scope of Voiculescu's theorem has
now been extended by many authors so as not to require the
entries be Gaussian and weakening the assumptions on the
joint distribution of the entries. This made the results of
free probability more widely applicable, in this paper we
extend this further to the infinitesimal law of orthogonally
invariant matrices.

Let us recall the definition of a non-commutative
distribution. Let $(\cA, \tau)$ be a non-commutative
probability space. This means that $\cA$ is a unital algebra
over $\bC$ and $\tau: \cA \to \bC$ is linear with $\tau(1) =
1$. Given $a_1, \dots, a_s \in \cA$ the set $\{ \tau(a_{i_1}
\cdots a_{i_n}) \mid 1 \leq i_1, \dots, i_n \leq s\}$ is the
non-commutative distribution of the random variables $a_1,
\dots, a_s$. Freeness is a rule for computing the joint
distribution from the distribution of the individual
variables, see \cite{ms2017, ns}.

If, for each $N$, we have a non-commutative probability
space $(\cA_\sN, \tau_\sN)$ and random variables $a_{1,
  \sN}, \dots, a_{s, \sN} \in \cA_\sN$, we get, for each
$N$, a non-commutative distribution. Asymptotic freeness
means that the joint distribution tends to the joint
distribution of free random variables.

Frequently the joint distribution of $a_{1, \sN}, \dots,
a_{s, \sN} $ can be expanded into a series in $1/N$, with
the leading term being the limit distribution. The
subleading terms of this expansion have been given a lot of
attention for their connections to Hurwitz numbers,
unitarily invariant ensembles in quantum gravity,
topological recursion, analysis of spike models and
principal minors. Examples of this phenomenon are presented
in the work of Johansson \cite{kj}, Dumitriu and Edelman
\cite{de}, Ledoux \cite{l}, Enriquez and Ménard \cite{em},
Kerov \cite{k}, and Bufetov, \cite{buf, bz}.

When asymptotic freeness holds up to an error of order
$o(1/N)$ in this expansion, the underlying rule which
appears for the first subleading term is the infinitesimal
freeness of Belinschi and Shlyakhtenko~\cite{bs}, or
equivalently the type $B$ freeness of Biane, Goodman and
Nica~\cite{bgn}, or Popa \cite{p}. For example, the
computation of the mixed moments of GUE random matrices up
to $O(1/N^2)$ in the pioneering paper of Thorbjørnsen
\cite{tho} can be rephrased as asymptotic infinitesimal
freeness of independent GUE matrices.

More generally, this first-order expansion of asymptotic
freeness, which yields infinitesimal freeness, holds
whenever the matrix ensembles are unitarily invariant in the
sense that the joint distributions of the entries of the
ensembles are invariant under conjugation by a unitary
matrix. It has first been proven by Curran and Speicher
\cite[Theorem 5.11]{cs}, where this expansion is done up to
$O(1/N^2)$ for bounded deterministic matrices which are
randomly rotated by unitary matrices, implying asymptotic
infinitesimal freeness.  In the work of Shlyakhtenko
\cite{s} and Collins, Hasebe, and Sakuma \cite{chs}
asymptotic infinitesimal freeness of a family of unitarily
invariant matrices from finite rank matrices was
shown. Similarly, Au \cite{a} proved the asymptotic
infinitesimal freeness of Wigner matrices from finite rank
matrices. Beyond the finite rank case, Dallaporta and
Février \cite{df} proved the asymptotic infinitesimal
freeness of independent GUE matrices from bounded
deterministic matrices. Finally, the asymptotic
infinitesimal freeness of two independent random matrices,
at least one of them being unitarily invariant, is a
consequence of the general theory of surfaced free
probability of Borot, Charbonnier, Garcia-Failde, Leid, and
Shadrin \cite{bcgls}, and also a consequence of the
computation of the matricial cumulants by the first author,
Dahlqvist and Gabriel \cite{cdg}. Indeed, both works contain
an explicit and complete expansion in powers of $1/N^{2}$ of
the mixed moments of independent and unitarily invariant
random matrices from which asymptotic infinitesimal freeness
can be recovered.

The main achievement of this paper is to weaken the
assumption of unitary invariance to invariance under the
smaller group of orthogonal matrices. As has been known
since the work of Goulden and Jackson \cite{gj}, this means
we now have to consider both non-orientable and orientable
surfaces in our analyses. On the matrix side this means
working with the transposes of the matrices in our
ensemble. The resulting infinitesimal freeness we call
\textit{real infinitesimal freeness}.

The need for this investigation was shown in \cite{m}, where
it was shown that independent GOE random matrices were not
asymptotically infinitesimally free in the sense of
\cite{fn}, but that there was a universal rule for computing
joint distributions. This was extended in \cite{mvb} to the
infinitesimal law of real Wishart random matrices. In
\cite{cgth}, Chen, Garza-Vargas, Tropp, and van Handel
demonstrated the existence of limit distributions; however
the method of calculating them was left open. This paper
provides a method for finding the $1/N$ distribution in the
orthogonal case.

In this paper we show that given ensembles $\{ \cA_{1, \sN},
\dots, \cA_{s, \sN} \}_N$, all of which, or all but one of
which, are orthogonally invariant with entries from
different ensembles independent and with appropriate limit
distributions, then the joint infinitesimal distribution
converges to a joint distribution which satisfies our new
rule of real infinitesimal freeness.

In detail, in \textbf{\S
  \ref{section:infinitesimal_cumulants}} we recall the
notions of (complex) infinitesimal freeness from
\cite{fn}. Then in \textbf{ \S \ref{sec:real infinitesimal
    probability spaces}} we introduce in Definition
\ref{def:real infinitesimal freeness}, real infinitesimal
freeness, and present an equivalent formulation in
Proposition \ref{prop:non-tracial case} that we use to make
the connection to random matrix ensembles. In \textbf{\S
  \ref{sec:integration by parts for random matrices}} we
present a formula for integration by parts needed for the
evaluation of orthogonally invariant matrix integrals. This
is needed for our proof, but is quite general and of
independent interest. In \textbf{\S \ref{sec:asymptotic
    freeness of orthogonally invariant matrices}}, we prove
our main result, Theorem \ref{thm:asymptotic real
  infinitesimal freeness}, which shows that orthogonally
invariant ensembles are asymptotically infinitesimally free.
In \textbf{\S \ref{section:real infinitesimal cumulants}} we
introduce real infinitesimal cumulants and prove Theorem
\ref{thm:moment cumulant}, the moment-cumulant formula. In
\textbf{\S \ref{sec:real infinitesimal cumulants and real
    infinitesimal freeness}} we show, in Theorem
\ref{thm:freeness and the vanishing of mixed cumulants},
that real infinitesimal freeness is equivalent to the
vanishing of mixed cumulants. In order to prove Theorem
\ref{thm:freeness and the vanishing of mixed cumulants} we
need the formula for cumulants with products as entries. In
\textbf{\S \ref{section:product formula}} we present the
statement of this formula in Theorem \ref{thm:product
  rule}. In addition to being necessary for the proof of
Theorem \ref{thm:freeness and the vanishing of mixed
  cumulants}, product formulas such as these are a key
computational tool with many applications in free
probability. In \textbf{\S \ref{section:small example}} we
present a small example as to how our formula works using
the square of semi-circular operator.  In \textbf{\S
  \ref{sec:product rule outline}}, we prepare the proof of
Theorem \ref{thm:product rule} by breaking it into the two
steps followed in the subsequent sections. In \textbf{\S
  \ref{sec:product formula first step}}, we present the
first part of the general case, with a focus on the
non-annular terms. The proof is concluded in \textbf{\S
  \ref{sec:product formula second step}} with a discussion
of the annular terms.

This paper is a part (the other parts being \cite{m} and
\cite{mvb}) of a series of papers investigating real
infinitesimal freeness. In \cite{m} it was shown that
independent GOE random matrices are not asymptotically
free. In \cite{mvb} the set of symmetric non-crossing
annular permutations was introduced in the context of the
infinitesimal law of a real Wishart matrix. It was shown
there that independent real Wishart matrices were
asymptotically real infinitesimally free, although at that
time the rules presented here were unavailable. In a
forthcoming fourth part, the connection between real
infinitesimal freeness and the subleading term of the finite
free cumulants of \cite{ap, agp, apv}. In \cite{ap},
Arizmendi and Perales showed that the leading order of a
finite free cumulant converges as $d \to \infty$ to the
corresponding free cumulants. In particular it will be shown
that the subleading term of \cite{apv} is the same as the
real infinitesimal freeness presented here.

\section{Complex Infinitesimal Free Freeness}
\label{section:infinitesimal_cumulants}

We present here a quick review of complex infinitesimal
freeness as this is our point of departure. We don't use any
results of this section in the rest of the paper, but
understanding the complex case is very useful for following
the rest of the paper. We let $\tC$ be the commutative 2
dimensional algebra of upper triangular matrices which are
constant on the diagonal
\[
\tC = \bigg\{\begin{bmatrix} \alpha & \alpha' \\ 0 &
\alpha \end{bmatrix} \ \bigg\vert\ \alpha, \alpha' \in \bC
\bigg\}.
\]

Now we suppose that $(\cA, \tau, \tau')$ is a complex
infinitesimal probability space. This means $\cA$ is a
unital algebra over $\bC$, $\tau, \tau': \cA \to \bC$ with
$\tau(1) = 1$ and $\tau'(1) = 0$. We next let $\tA$ be the
same construction applied to $\cA$:
\[
\tA = \bigg\{\begin{bmatrix} a & a' \\ 0 & a \end{bmatrix}
\ \bigg\vert\ a, a' \in \cA \bigg\}.
\]
Then we define the linear map $\ttau: \tA \rightarrow \tC$
written symbolically as
\[
\ttau = \begin{bmatrix} \tau & \tau' \\ 0 & \tau \end{bmatrix};
\]
by this we mean
\[
\ttau \bigg(\begin{bmatrix} a & a' \\ 0 & a \end{bmatrix}
\bigg) = \begin{bmatrix} \tau & \tau' \\ 0 &
  \tau \end{bmatrix} \begin{bmatrix} a & a' \\ 0 &
  a \end{bmatrix} =
\begin{bmatrix} \tau(a) & \tau'(a) + \tau(a') \\ 0 & \tau(a) \end{bmatrix}.
\]
Note that $\ttau$ is a conditional expectation of $\tA$ onto
$\tC$.  In \cite{t} it was shown that the definition of
complex infinitesimal freeness presented in Remark
\ref{remark:comparison} $(i)$, $(ii)$, and $(v)$ is
equivalent to freeness over $\tC$. Let us recall the
statement. Suppose $(\cA, \tau, \tau')$ is an infinitesimal
probability space and $\cA_1, \dots, \cA_s$ are unital
subalgebras of $\cA$. Let $\tilde{\cA}_1, \tilde{\cA}_1,
\dots, \tilde{\cA}_1$ be the upper triangular subalgebras of
$\tA$ obtained from the construction above. Then $\cA_1,
\dots, \cA_s$ are free with respect to $(\tau, \tau')$ if
and only if $\tilde{\cA}_1, \dots, \tilde{\cA}_s$ are free
over $\tC$ with respect to $\ttau$. As mentioned above $\tC$
is a commutative ring with unit, so nearly all of the
combinatorial theorems of \cite{ns} remain valid over
$\tC$. This has some far reaching consequences which greatly
simplify many proofs (see Equation (\ref{eq:complex product
  formula}) below). Let us recall our notation for the
infinitesimal cumulants. Recall first the moment cumulant
formula. Given $a_1, \dots, a_n \in \cA$ we have
\begin{equation}\label{eq:moment_cumulant}
\tau(a_1 \cdots a_n)
=
\sum_{\pi \in \NC(n)} \kappa_\pi(a_1, \dots, a_n)
\end{equation}
see \cite[Lecture 11]{ns}. 

We do the same thing in the complex infinitesimal case by
setting \[\partial \kappa_n^{\sC}(a_1, \dots, a_n) =
\kappa_n^{{\spC}}(a_1, \dots, a_n)\] and for $\pi \in
\NC(n)$ we set
\[
\partial \kappa_\pi^{\sC} (a_1, \dots, a_n)
=
\sum_{V \in \pi} \kappa_{|V|}^{{\spC}}(a_1, \dots, a_n \mid V)
\prod_{W \not = V} \kappa_{|W|}(a_1, \dots, a_n \mid W). 
\]
Then the complex infinitesimal version of equation
(\ref{eq:moment_cumulant}) is obtained by formal implicit
differentiation
\begin{equation}\label{eq:complex infinitesimal_moment_cumulant}
\tau'(a_1 \cdots a_n)
=
\sum_{\pi \in \NC(n)} \partial \kappa_\pi^{\sC}(a_1, \dots, a_n).
\end{equation}
When $n=1$ we have $\tau'(a_1) = \kappa^{\spC}_1(a_1)$ and
when $n = 2$ we have $\tau'(a_1 a_2) = \kappa^\spC_2(a_1,
a_2) + \kappa^\spC_1(a_1) \kappa_1(a_2) + \kappa_1(a_1)
\kappa^\spC_1(a_2)$. From these two equations we can obtain
formulas for $\kappa^\spC_1$ and $\kappa^\spC_2$ in terms of
$\tau$ and $\tau'$. By using matricial cumulants $\{ \tilde
\kappa_n \}_n$ with values in $\tC$ we can write this quite
simply as:
\[
\tilde\kappa_\pi(A_1, \dots, A_n)
=
\begin{bmatrix} \kappa_\pi(a_1, \dots, a_n ) &
                \partial\kappa^\sC_\pi(a_1, \dots, a_n ) \\
                &  \mbox{} +
                \ds\sum_{k=1}^n \kappa_\pi(a_1, \dots, a'_k, \dots, a_n ) \\
                0 & \kappa_\pi(a_1, \dots, a_n )
\end{bmatrix},                               
\]
with
\[
A_1 = \begin{bmatrix} a_1 & a'_1 \\ 0 & a_1 \end{bmatrix}, \dots, 
A_n = \begin{bmatrix} a_n & a'_n \\ 0 & a_n \end{bmatrix}
\in \tA. 
\]
Then we have the usual moment-cumulant relations:
\[
\tilde\tau(A_1 \cdots A_n) = \sum_{\pi \in \NC(n)} \tilde\kappa_\pi(A_1, \dots, A_n)
\]
and
\[
\tilde\kappa(A_1, \dots, A_n) = \sum_{\pi \in \NC(n)} \mu(\pi, 1_n)\tilde\tau_\pi(A_1, \dots, A_n). 
\]

When we work with upper triangular matrices the formula for
cumulants with products with entries follows from
\cite[Theorem 14.4]{ns} because the algebra $\tC$ is
commutative. Thus, when we examine the $(1, 2)$ entry of the
cumulant matrix we find that when we have $a_1, \dots, a_n
\in \cA$ with $(\cA, \tau, \tau')$ a complex infinitesimal
probability space and we let ${\bm a_1} = a_1 \cdots
a_{n_1}$, \dots, ${\bm a_r} = a_{n_1 + \cdots + n_{r-1} + 1}
\cdots a_{n_1 + \cdots + n_r}$ then
\begin{align}
\label{eq:complex product formula}
\kappa_r'({\bm a_1}, \dots, {\bm a_r})
=
\mathop{\sum_{\pi \in \NC(n)}}_{\pi \vee \rho_r = 1_n}
\partial \kappa_\pi(a_1, \dots, a_n)
\end{align}
where $\rho_r$ is the interval partition with intervals $\{
\{n_1 + \cdots + n_{l-1} + 1, \dots, n_1 + \cdots + n_l\}
\}_{l=1}^r$. In Theorem \ref{thm:product rule} we present
the `real' version of this formula.

In Equation (\ref{eq:infinitesimal moment cumulant}) of \S
\ref{section:real infinitesimal cumulants}, we replace
equation (\ref{eq:complex infinitesimal_moment_cumulant}) by
\begin{equation}
\tau'(a_1 \cdots a_n) = \sum_{\pi \in \NC(n)}
\nabla\kappa_\pi (a_1, \dots, a_n)
\tag{\ref{eq:infinitesimal moment cumulant}}
\end{equation}
where $\nabla = \partial + \delta$ and $\partial$ is as
above and $\delta$ is something new for the real case, which
we call the \textit{spatial derivative}, see Notation
\ref{notation:spatial derivative}. Since the left hand side
of Equation (\ref{eq:infinitesimal moment cumulant}) doesn't
change in passing to the real case, this changes the values
of the real infinitesimal cumulants so that they capture the
properties we seek from the random matrix models. This fixes
the problem with the complex infinitesimal cumulants of the
GOE reported in \cite[Prop. 29]{m}.


\section{Real Infinitesimal Probability Spaces}\label{sec:real infinitesimal probability spaces}

In this section we review some notions of free independence
with the addition of an involution. These have already
appeared in the work of Redelmeier \cite{r} others, but we
repeat them here for clarity.

Let $\cA = \bC\langle x_1, \dots, x_s, x_1^t, \dots,
x_s^t\rangle$ where $\{ x_1, \dots, x_s, x_1^t, \dots, x_s^t
\}$ are $2 s$ non-commuting variables. For this part of the
discussion it is useful to adopt the notation that
$x^{\st{(1)}}_i = x_i$ and $x^{\st{(-1)}}_i = x_i^t$. We
define an involution, $w \to w^t$, on $\cA$ by mapping
$x^{(\epsilon_1)}_{i_1} \cdots x^{(\epsilon_n)}_{i_n}$ to
$x^{(-\epsilon_n)}_{i_n} \cdots x^{(-\epsilon_1)}_{i_1}$,
where $\epsilon_1, \dots, \epsilon_n \in \{-1, 1\}$, and
then extend to all of $\cA$ by linearity. If $a \in \cA$ is
such that $a = a^t$ we say that is \textit{symmetric}. A
linear subspace $\cB \subseteq \cA$ is \textit{symmetric} if
$b^t \in \cB$ whenever $b \in \cB$.  We say that a linear
map $\tau: \cA \to \bC$ is \textit{symmetric} if $\tau(a^t)
= \tau(a)$. A linear map $\tau: \cA \to \bC$ is a
\textit{state} if $\tau$ is symmetric and $\tau(1) = 1$. The
triple $(\cA, t, \tau)$ is just a special case of a
\textit{real non-commutative probability space}: $\cA$ is a
unital algebra over $\bC$, $t$ is an involution, and $\tau:
\cA \to \bC$ is a state.

Another important example of a real probability space comes
from random matrices. Let $(\gO, \gS, \rP)$ be a probability
space , and $\cL^{\infty-}$ be the commutative algebra of
random variables, $X$, on $\gO$ such that $\rE(|X|^n) <
\infty$ for all $n$. For a fixed $N$, we let $\cA_\sN =
M_\sN(\cL^{\infty-})$ be the matrices with entries from
$\cL^{\infty-}$, and for $A \in \cA_\sN$ we let $\tau_\sN(A)
= \frac{\sone}{\sN} \Tr(A)$. The involution, $t$, is the
usual transpose.

Given any real non-commutative probability space $(\cB,
\tau, t)$ and elements $b_1 \dots , b_n \in \cB$ we define a
state, $\tau_{\vec b}$, on $\bC\langle x_1, \dots, x_s,
x_1^t, \dots, x_s^t\rangle$ by $\tau_{\vec b\, }(p) =
\tau(p(b_1, \dots, b_s, b_1^t, \dots, b_s^t))$. We call
$\tau_{\vec b\,}$ the \textit{joint distribution} of the
$n$-tuple $\vec b = (b_1, \dots, b_n)$. The variables $(b_1,
\dots, b_n)$ are free with respect to $\tau$ if and only if
the variables $(x_1, \dots, x_n)$ are free with respect to
$\tau_{\vec b}$, because they have the same joint
distribution. We say that the random variables $(b_1, \dots,
b_n)$ are $t$-free with respect to $\tau$ if the random
variables $(x_1, \dots, x_n, x_1^t, \dots, x_n^t)$ are free
with respect to $\tau_{\vec b}$.

If we have a sequence $\{(\cB_\sN, \tau_\sN, t)\}_\sN$ of
real non-commutative probability spaces and for each $N$ a
$n$-tuple of random variables $b_{1, \sN}, \dots, b_{n, \sN}
\in \cB_\sN$, we say the tuples $\{\vec {b}_\sN\}_\sN$
\textit{converge in distribution} if the sequence of states
$\{ \tau_{\vec{b}_\sN} \}_\sN$ converges point-wise on
$\bC\langle x_1, \dots, x_s, x_1^t, \dots,
x_s^t\rangle$. The state to which $\{ \tau_{\vec{b}_\sN}
\}_\sN$ converges is called the \textit{limit distribution}.

If $(\cA, \tau, t)$ is a real probability space and $\tau':
\cA \to \bC$ is a symmetric linear map with $\tau'(1) = 0$
we call the quadruple $(\cA, \tau, \tau', t)$ a \textit{real
  infinitesimal non-commutative probability space}, or to be
brief, a \textit{real infinitesimal probability space}.

Given a sequence, $\{\vec {b}_\sN\}_\sN$, of random
variables with joint distributions $\{ \tau_{\vec{b}_\sN}
\}_\sN$ converging to the state $\tau_{\vec b}$ we can
create a sequence of linear maps $\{ \tau'_{\vec{b}_\sN}
\}_\sN$ by setting
\[
\tau'_{\vec{b}_\sN}(p) = N( \tau_{\vec{b}_\sN}(p) - \tau_{\vec{b}\,}(p)) 
\]
for $p \in \bC\langle x_1, \dots, x_s, x_1^t, \dots,
x_s^t\rangle$. We have $\tau'_{\vec{b}_\sN}(p^t) =
\tau'_{\vec{b}_\sN}(p)$, but $\tau'_{\vec{b}_\sN}(1) = 0$;
so $\tau'_{\vec{b}_\sN}$ is not a state. If we let $\cB_\sN$
be the unital subalgebra generated by ${b}_{1_\sN}, \dots,
{b}_{_\sN}, {b}^t_{1_\sN}, \dots, {b}^t_{_\sN}$, we have a
real infinitesimal probability space $(\cB_\sN, \tau_{\vec
  b, \sN}, \tau'_{\vec b, \sN}, t)$. If the sequence of
linear functionals $\{\tau'_{\vec{b}_\sN}\}_\sN$ converges
point-wise on $\bC\langle x_1, \dots, x_s, x_1^t, \dots,
x_s^t\rangle$ to $\tau'_{\vec b}$ we say that the sequence
of variables $\{ \vec{b}_\sN\}_\sN$ has a \textit{limit real
  infinitesimal law}. The inclusion of the word `real' is to
signal that we always include transposes in this notation by
requiring convergence on $\bC\langle x_1, \dots, x_s, x_1^t,
\dots, x_s^t\rangle$.

\begin{definition}\label{def:real infinitesimal freeness}
Let $(\cA, \tau, \tau', t)$ be a real infinitesimal
probability space and $\cA_1, \dots, \cA_s \subseteq \cA$
symmetric unital subalgebras. We say that the subalgebras
$\cA_1, \dots, \cA_s$ are \textit{real infinitesimally free}
if: whenever $a_1, \dots, a_n \in \cA$ with $\tau(a_i) = 0$
and $a_i \in \cA_{j_i}$ with $j_1\not = j_2$, \dots,
$j_{n-1} \not= j_n$; we have

\begin{enumerate}
\item\smallskip
$\tau(a_1 \cdots a_n) = 0$, and

\item\smallskip
when $n = 2$, $\tau'(a_1 a_2) = 0$ 

\item\smallskip
when $n = 2k - 1 \geq 3$, we have
\begin{align*}
\tau'(a_1 \cdots a_n) =\mbox{}  & \tau(a_1\tau'(a_2 \cdots a_{n-1})a_n) \\
& \mbox{} +
\tau(a_1 a_k^t a_n) \tau(a_2 a_{k+1}^t) \cdots \tau(a_{k-1}a_{n-1}^t) 
\end{align*}

\item\smallskip
when $n = 2k \geq 4$, we have
\begin{align*}
\tau'(a_1 \cdots a_n) =\mbox{}  & \tau(a_1\tau'(a_2 \cdots a_{n-1})a_n) \\
& \mbox{} +
\tau(a_1 a_{k+1}^t) \tau(a_2 a_{k+2}^t) \cdots \tau(a_{k}a_{n}^t)
\end{align*}
\end{enumerate}
\end{definition}

\begin{remark}\label{remark:comparison}
It is worth comparing the properties $(i)$, $(ii)$, and
$(iii)$ with type $B$ freeness or complex infinitesimal
freeness from \cite{fn}. Note that $(i)$ just says that the
algebras $\cA_1, \dots, \cA_s$ are free with respect to
$\tau$.

Recall that the condition that $a_i \in \cA_{j_i}$ with $j_1
\not= j_2$, \dots, $j_{n-1} \not= j_n$ is called
\textit{alternating} and that if we assume in addition that
$j_n \not = j_1$ this stronger property is called
\textit{cyclically alternating}. These properties are always
relative to a specified set of subalgebras.

For type $B$ or complex infinitesimal freeness we have that
if $a_1, \dots, a_n$ are centred and alternating then
\begin{enumerate}

\item\smallskip
$\tau(a_1 \cdots a_n) = 0$, and

\item[$(ii)$]\smallskip
when $n = 2$, $\tau'(a_1 a_2) = 0$  

\item[$(v)$]\smallskip
for $n \geq 3$, $\tau'(a_1 \cdots a_n) = \tau(a_1\tau'(a_2 \cdots a_{n-1})a_n)$.
\end{enumerate}
So items $(i)$ and $(ii)$ from Definition \ref{def:real
  infinitesimal freeness} are unchanged, and $(iii)$ and
$(iv)$ are replaced by $(v)$. Putting $(ii)$ and $(v)$
together we see that $\tau'(a_1 \cdots a_n) = 0$ for $n$
even, whereas for $n = 2k -1 $ odd, we have
\[
\tau'(a_1 \cdots a_n) = \tau(a_1a_n)\cdots \tau(a_{k-1}a_{k+1}) \tau'(a_k), 
\]
which is the usual expression of the rule for type $B$ or
infinitesimal freeness. In parts $(iii)$ and $(iv)$ of
Definition \ref{def:real infinitesimal freeness} we each
have two terms: one involving $\tau'$ and the other
involving $\tau$ but having a transpose on some of the
arguments. To give these parts a name we refer to term
involving $\tau'$, the \textit{time derivative term} and the
one involving the transpose the \textit{space derivative
  term}. When both $\tau$ and $\tau'$ are tracial and we
assume the arguments are cyclically alternating then the
time derivative term disappears and we only have the space
derivative.

\end{remark}

\begin{remark}
When we start with a random matrix ensemble and use
$\frac{\sone}{\sN}\Tr$ as the state we will get in the limit
a non-commutative probability space with state $\tau$ which
is a trace: $\tau(ab) = \tau(ba)$. In the case of freeness
there is no simplification when $\tau$ is a trace, however
in the next section we shall see a different rule (Equation
(\ref{eq:intermediate limit distribution}) in Lemma
\ref{lemma:inside induction}) for real infinitesimal
freeness arising from our random matrix model.  In the
Proposition below we show that these two rules are
equivalent.
\end{remark}

\begin{proposition}\label{prop:non-tracial case}
Let $(\cA, \tau, \tau', t)$ be a real infinitesimal
probability space with $\tau$ and $\tau'$ tracial. Let
$\cA_1, \dots, \cA_s \subseteq \cA$ be unital symmetric
subalgebras which are free with respect to $\tau$. Then
$\cA_1, \dots, \cA_n$ are real infinitesimal free if and
only if whenever $a_1, \dots, a_n \in \cA_1 \cup \cdots \cup
\cA_s$ are centred and cyclically alternating, we have

\begin{enumerate}

\item
when $n = 2$ or $n$ is odd, $\tau'(a_1  \cdots a_n) = 0$ and, \medskip

\item
when $n = 2 k \geq 4$ is even 
$\tau'(a_1 \cdots a_n) = \tau(a_1a_{k+1}^t) \cdots \tau(a_ka_n^t)$.
\end{enumerate}
\end{proposition}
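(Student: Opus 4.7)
The plan is to prove both directions by exploiting traciality of $\tau$ and $\tau'$ to cycle arguments, bridging the alternating hypothesis of Definition \ref{def:real infinitesimal freeness} with the cyclically alternating hypothesis of the proposition. The central observation is that for alternating centred tuples $a_1,\dots,a_n$, traciality gives $\tau(a_1\,\tau'(a_2\cdots a_{n-1})\,a_n) = \tau'(a_2\cdots a_{n-1})\,\tau(a_1 a_n)$, and $\tau(a_1 a_n)$ vanishes whenever $j_1 \neq j_n$ by freeness; so the time derivative term in Definition \ref{def:real infinitesimal freeness}(iii) and (iv) contributes only when $j_1 = j_n$.

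For the forward direction, assume real infinitesimal freeness and let $a_1,\dots,a_n$ be centred and cyclically alternating, so $j_1 \neq j_n$. The time derivative term vanishes. In the even case $n = 2k$, the remaining space derivative term $\tau(a_1 a_{k+1}^t)\cdots\tau(a_k a_n^t)$ is already the desired formula. In the odd case $n = 2k-1\geq 3$, I would show $\tau(a_1 a_k^t a_n) = 0$ by a brief case analysis: if $j_1 = j_k$ (resp.\ $j_k = j_n$), the product $a_1 a_k^t$ (resp.\ $a_k^t a_n$) lies in the symmetric subalgebra $\cA_{j_1}$ (resp.\ $\cA_{j_k}$), and decomposing it into its $\tau$-centred part plus a scalar reduces the trace to a centred element paired with another in a different subalgebra, which vanishes by freeness using $j_1 \neq j_n$; otherwise $a_1, a_k^t, a_n$ are centred and alternating in $\tau$ (since $\cA_{j_k}$ is symmetric, $a_k^t$ is centred in $\cA_{j_k}$), so freeness gives zero directly. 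The case $n = 2$ is immediate from Definition \ref{def:real infinitesimal freeness}(ii).

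For the converse, assume the simpler rule and let $a_1,\dots,a_n$ be centred and alternating. If $j_1 \neq j_n$, the tuple is cyclically alternating and the computation above shows that Definition \ref{def:real infinitesimal freeness} and the simpler rule produce the same value. The essential case is $j_1 = j_n$: here symmetry of $\cA_{j_1}$ gives $a_n a_1 \in \cA_{j_1}$, and writing $a_n a_1 = b + \beta\cdot 1$ with $\beta = \tau(a_1 a_n)$ and $b \in \cA_{j_1}$ centred, traciality of $\tau'$ yields
\[
\tau'(a_1\cdots a_n) = \tau'(a_n a_1 a_2\cdots a_{n-1}) = \tau'(b\,a_2\cdots a_{n-1}) + \beta\,\tau'(a_2\cdots a_{n-1}).
\]
The tuple $(b, a_2,\dots,a_{n-1})$ of length $n-1$ is cyclically alternating, since $j_{n-1}\neq j_n = j_1$; I would apply the simpler rule to it. When $n$ is odd, $n-1$ is even, and the resulting product of pairings, after using $\tau(a_k^t) = 0$ and traciality to rewrite $\tau(b a_k^t) = \tau(a_1 a_k^t a_n)$, reproduces the space derivative term of Definition \ref{def:real infinitesimal freeness}(iii) (with the boundary case $n=3$ handled by noting that $\tau'(b a_2)=0$ from the simpler rule and $\tau(a_1 a_2^t a_3)=0$ by freeness). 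When $n$ is even, $n-1$ is odd and $\tau'(b\,a_2\cdots a_{n-1}) = 0$, leaving only $\tau(a_1 a_n)\,\tau'(a_2\cdots a_{n-1})$; a separate check shows the space term of Definition \ref{def:real infinitesimal freeness}(iv) vanishes, since the pairing condition $j_i = j_{k+i}$ needed for it to be nonzero, combined with $j_1 = j_n$, would force $j_k = j_{k+1}$, contradicting alternation.

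The main technical obstacle is the bookkeeping that matches the space derivative term of Definition \ref{def:real infinitesimal freeness}(iii) written in terms of $(a_1,\dots,a_n)$ with the output of the simpler rule applied, after cycling, to $(b, a_2,\dots,a_{n-1})$ of length $n-1$; once the re-indexing and the identity $\tau(b a_k^t) = \tau(a_1 a_k^t a_n)$ are in hand, all cases align and the proposition follows.
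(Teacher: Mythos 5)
Your proof is correct. The forward direction and the case $j_1 \neq j_n$ of the converse coincide with the paper's argument: traciality turns the time derivative term into $\tau'(a_2\cdots a_{n-1})\,\tau(a_1 a_n)$, which vanishes for cyclically alternating tuples, and your three-case analysis of $\tau(a_1 a_k^t a_n)$ (distinct indices, $j_1=j_k$, or $j_k=j_n$) is exactly the one used there; likewise the centering trick $a_na_1 = b + \tau(a_na_1)$ is the paper's $\tilde a_1$. The genuine divergence is the converse for $n$ even with $j_1 = j_n$. The paper argues that \emph{both} sides of Definition~\ref{def:real infinitesimal freeness}$(iv)$ vanish, via its Sub-claim~1, which asserts $\tau'(a_1\cdots a_n)=0$ for every centred alternating tuple of even length with $j_1=j_n$. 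You instead show only that $\tau'(a_1\cdots a_n)$ equals the time derivative term $\tau(a_1a_n)\,\tau'(a_2\cdots a_{n-1})$, and separately that the space derivative term vanishes (your parity observation $j_k=j_{k+1}$ is the paper's Sub-claim~2). Your version is the correct one. The inductive step of Sub-claim~1 discards the term $\tau(a_na_1)\,\tau'(a_2\cdots a_{n-1})$ ``by induction,'' but the induction hypothesis applies only when $j_2=j_{n-1}$; if $j_2\neq j_{n-1}$ the inner tuple is cyclically alternating of even length and condition $(ii)$ of the Proposition yields a product of pairings that need not vanish. For instance, with $n=6$ and $j=(1,2,3,2,3,1)$ one gets $\tau'(a_1\cdots a_6)=\tau(a_1a_6)\,\tau(a_2a_4^t)\,\tau(a_3a_5^t)$, which equals $1$ when $a_1=a_6=s_1$, $a_2=a_4=s_2$, $a_3=a_5=s_3$ are free standard semicirculars with the infinitesimal law coming from independent GOE matrices; this is consistent with Definition~\ref{def:real infinitesimal freeness}$(iv)$ (whose space term vanishes because $\tau(a_1a_4^t)=0$) but contradicts Sub-claim~1. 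So your bookkeeping, which retains the possibly nonzero time derivative term rather than arguing it away, closes a gap in the published proof; everything else proceeds as in the paper.
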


\begin{proof}
First, let us assume that $\cA_1, \dots, \cA_s$ are real
infinitesimally free and $a_i \in \cA_{j_i}$ are centred and
cyclically alternating. We shall then prove conditions $(i)$
and $(ii)$ of the Proposition. We have $\tau(a_1\tau'(a_2
\cdots a_{n-1})a_n)\ab = 0$, as $j_n \not= j_1$. Next
suppose $2 \leq k \leq n-1$ and consider the indices $j_1,
j_k$, and $j_n$. We have three cases: $j_1$, $j_k$, $j_n$
distinct, $j_1 = j_k$, or $j_k = j_n$. In all three cases
$\tau(a_1 a_k^t a_n) = 0$ by the freeness of $\cA_{j_1}$ and
$\cA_{j_n}$. Thus when $n$ is odd or equal to $2$ we have
$\tau'(a_1 \cdots a_n) = 0$. Now when $n \geq 4$ is even we
have by $(iv)$ of Definition \ref{def:real infinitesimal
  freeness} that $(ii)$ above holds.

To prove the reverse implication, let us assume that $(i)$
and $(ii)$ above hold whenever $a_1, \dots, a_n$ are centred
and cyclically alternating. Let us show that $\cA_1, \dots,
\cA_s$ are real infinitesimally free. This means that we
have to show that whenever we have $a_1, \dots, a_n$ are
centred and alternating, the conditions $(iii)$ and $(iv)$
of Definition \ref{def:real infinitesimal freeness} hold.

Let us assume that $n = 2k -1$ is odd. If $j_n \not = j_1$
then just by the freeness of $\cA_1, \dots, \cA_n$ we have
that $\tau(a_1 \tau'(a_2 \cdots a_{n-1})a_n) = 0$ and
$\tau(a_1 a_k^t a_n) = 0$. Thus $(i)$ above, implies $(iii)$
of Definition \ref{def:real infinitesimal freeness}. Now
assume that $j_n = j_1$ and let $\tilde{a}_1 = a_na_1 -
\tau(a_n a_1)$. Now $\tilde{a}_1, a_2, \dots, a_{n-1}$ are
centred and cyclically alternating and $n-1$ is even. Thus
by $(ii)$ above we have that
\[
\tau'(\tilde{a}_1 a_2 \cdots a_{n-1}) = \tau(\tilde{a}_1 a_k^t) \cdots \tau(a_{k-1}a_{n-1}^t) = \tau(a_1 a_k^t a_n) \cdots \tau(a_{k-1}a_{n-1}^t)
\]
\noindent
because $\tau(\tilde{a}_1a_k^t) = \tau(a_1 a_k^t a_n)$. Finally we have
\[
\tau'(a_1 \cdots a_n) = \tau'(a_n a_1 \cdots a_{n-1}) 
=
\tau'(\tilde{a}_n a_2 \cdots a_{n-1}) + \tau(a_na_1) \tau'(a_2 \cdots a_{n-1})
\]
\[
= \tau(a_1 \tau'(a_2 \cdots a_{n-1}) a_n) =
\tau(a_1 a_k^t a_n) \cdots \tau(a_{k-1}a_{n-1}^t)
\]
which is exactly condition $(iii)$ of Definition
\ref{def:real infinitesimal freeness}.

Now let us assume $n = 2k \geq 4$. We must prove $(iv)$ of
Definition \ref{def:real infinitesimal freeness}. We do this
in two cases: $j_1 \not = j_n$ and secondly $j_1 = j_n$. In
case $j_i \not = j_n$ the right hand side of $(iv)$ in
Definition \ref{def:real infinitesimal freeness} becomes the
right hand side of $(ii)$ in Proposition
\ref{prop:non-tracial case}, because $j_1 \not = j_n$
implies that
\[
\tau(a_1 \tau'(a_2 \cdots a_{n-1}) a_n)
=
\tau(a_1 a_n) \tau'(a_2 \cdots a_{n-1}) =0 . 
\]
So we are done in the case $j_1 \not = j_n$. Before going
further we need to prove two subclaims.

\medskip\noindent\textit{Sub-claim 1.}\medskip

Suppose $\tau'$ satisfies $(i)$ and $(ii)$ of Proposition
\ref{prop:non-tracial case} and $a_1, \dots, a_n$ are
centred and alternating with $j_1 = j_n$ and $n$ even. Then
$\tau'(a_1 \cdots a_n) = 0$.

\medskip\noindent We prove this by induction on $n$. When $n
= 2$, the claim holds by our assumption: $(i)$ above. For $n
> 2$ we have, letting $\tilde{a}_1 = a_n a_1 - \tau(a_n
a_1)$,
\begin{align*}
\tau'(a_1 \cdots a_n) &= \tau'(a_na_1 a_2 \cdots a_{n-1})
=
\tau'(\tilde{a}_1 a_2 \cdots a_{n-1}) \\
& \qquad\mbox{} + \tau(a_1 \tau'(a_2 \cdots a_{n-1})a_n)
 = 0,
\end{align*}
where the first term vanishes by $(i)$ above, because $n-1$
is odd, and the second term vanishes by our induction
hypothesis. This proves Sub-claim 1.

\medskip\noindent\textit{Sub-claim 2.}\medskip

If $n = 2 k$ is even, $a_1, \dots, a_n$ are centred and alternating, but with $j_1 = j_n$, then 
\[
\tau(a_1 a_{k+1}^t) \cdots \tau(a_k a_n^t) = 0. 
\]

\medskip\noindent In order to have $\tau(a_1 a_{k+1}^t)
\cdots \tau(a_k a_n^t) \not = 0$ we must have $j_1 =
j_{k+1}$ and $j_k = j_n$. But by assumption $j_1 = j_n$,
thus $j_k = j_{k+1}$ contrary to our assumption that $a_1,
\dots, a_n$ are alternating. This proves Sub-claim
2. \medskip

Now let us conclude the proof of the Proposition. We assume
$n = 2 k$ and that $\tau'$ satisfies $(i)$ and $(ii)$ of
Proposition \ref{prop:non-tracial case}, and $a_1, \dots,
a_n$ are centred and alternating, with $j_1 = j_n$. We must
prove that
\begin{equation}\label{eq:even case final step}
\tau'(a_1 \cdots a_n)
=
\tau(a_1\tau'(a_2 \cdots a_{n-1}) a_n)
+
\tau(a_1 a_{k+1}^t) \cdots \tau(a_k a_n^t)
\end{equation}
which is $(iv)$ of Definition \ref{def:real infinitesimal
  freeness}. By Sub-claim 1 we have $\tau'(a_1 \cdots a_n) =
\tau(a_1\tau'(a_2 \cdots a_{n-1}) a_n) = 0$. By Sub-claim 2
we have $\tau(a_1 a_{k+1}^t) \cdots \tau(a_k a_n^t) =
0$. Thus both sides of (\ref{eq:even case final step})
vanish and this concludes the proof of the Proposition.
\end{proof}


\section{Integration by parts for random matrices}\label{sec:integration by parts for random matrices}

We need some general definitions in order to do integration
by parts on the orthogonal group. Let $\mathfrak{so}(N)$ be
the linear spaces of real skew-symmetric matrices of size
$N$, and let $(K_{ab})_{1\leq a< b\leq N}$ be the basis of
$\mathfrak{so}(N)$ given by $K_{ab}=E_{ab}-E_{ba}$ where
$E_{ab}$ is the matrix with $1$ in the $(a, b)$-entry and
$0$ elsewhere. We have
\begin{align*}
\sum_{1 \leq a < b \leq N} K_{a  b}\otimes K_{a b}
& = \kern-0.75em
\sum_{1 \leq a < b \leq N} \kern-0.5em E_{ab}\otimes (E_{ab} - E_{ba})
- \kern-0.75em
\sum_{1 \leq a > b \leq N} \kern-0.5em E_{ab} \otimes (E_{ba} - E_{ab}) \\
&=
\sum_{1 \leq a, b \leq N}  E_{ab}\otimes E_{ab} - \sum_{1\leq a, b\leq N}E_{ab}\otimes E_{ba}\\
&=P-T
\end{align*}
where $P= \sum_{1\leq a, b\leq N}E_{ab}\otimes E_{ab}$ and
$T=\sum_{1\leq a, b\leq N}E_{ab}\otimes E_{ba}$. For a
differentiable function $f: O(N) \to M_N(\mathbb{C})$, we
define the left derivative $\partial f(O): \mathfrak{so}(N)
\to M_N(\bC)$ by
$$
\partial f(O) (K_{ab})=\left.\frac{d}{d t}\right|_{t=0} f(e^{tK_{ab}}O).
$$ For convenience we denote this by $\partial_{K_{ab}}f$
and with this notation we consider it as a map $O(N) \to
M_N(\bC)$. If $f$ has two derivatives, we can differentiate
$\partial_{K_{ab}}f$ to get $\partial^2_{K_{ab}}f$. For such
$f$ we then define the Laplacian operator by
\[
\Delta f=\sum_{1\leq a< b\leq N}\partial_{K_{ab}}^2f.
\]
Denoting by $\id:SO(N)\to M_N(\mathbb{C})$ the map
$\id(O)=O$ and by $\iota:SO(N)\to M_N(\mathbb{C})$ the map
$\iota(O)=O^t=O^{-1}$. We have
\[
\partial_{K_{ab}}id(O)=K_{ab}O\ \text{ and }\ \partial_{K_{ab}}\iota(O)=-O^{-1}K_{ab},
\]
or more concisely, $\partial_{K_{ab}}\id=K_{ab}\id$ and
$\partial_{K_{ab}}\iota = - \iota K_{ab}$. In particular we
have, $\Delta \id =\ab\ds\sum_{1\leq a< b\leq N}K_{ab}
K_{ab}\id$ and using $\ds\sum_{1\leq a< b\leq
  N}K_{ab}\otimes K_{ab}=P-T$ we get
$$\Delta \id=\sum_{1 \leq a , b\leq N} -(E_{ba}E_{ab}+E_{ab} E_{ba}) \id = (1-N)\id.$$
Let us define the \textit{carré du champ} operator $\Gamma(f,g):SO(N)\to M_N(\mathbb{C})\otimes  M_N(\mathbb{C})$ by
\[
\Gamma(f,g)= \sum_{1\leq a< b\leq N}\partial_{K_{ab}}f\otimes \partial_{K_{ab}}g.
\]
If both $f$ and $g$ are twice differentiable functions from
$O(N)$ to $M_N(\bC)$ then we define $f \otimes g: O(N) \to
M_N(\bC) \otimes M_N(\bC)$ by $f\otimes g(O) = f(O) \otimes
g(O)$. Then
\begin{align*} 
\partial^2_{K_{a, b}}(f \otimes g)(O)
\mbox{}\ = \ \mbox{} & 
\partial^2_{K_{a, b}}(f)(O) \otimes g(O)   \\
& \mbox{}+ 
2 \partial_{K_{a, b}}(f)(O) \otimes \partial_{K_{a, b}}(g)(O) + f(O) \otimes \partial^2_{K_{a, b}}(g)(O),
\end{align*}
and so
\[
\Delta(f\otimes g)=\Delta(f)\otimes g+2\Gamma(f,g)+f\otimes \Delta(g).
\]
Because the Haar measure on $O(N)$ is invariant under
multiplication by $e^{tK_{a, b}}$ we have
$\mathbb{E}(\Delta(f \otimes g)) = 0$. In addition we have
by using integration by parts twice that $\mathbb{E}(f
\otimes \Delta(g)) = \mathbb{E}(\Delta(f) \otimes g)$. Thus
we have the following basic integration by parts formula:
\begin{equation}
\mathbb{E}[\Gamma(f,g)(O_N)]
= \mbox{} -
\mathbb{E}[\Delta f(O_N)\otimes g(O_N)].\label{intbypart}
\end{equation}

\begin{proposition}\label{prop:int_by_parts}
Let $O_N$ be a Haar distributed orthogonal random matrix of
size $N$, $n$ even, and $M_1,\ldots,M_n\in
M_N(\mathbb{C})$. We have
\begin{align*}\lefteqn{
(N-1)\cdot \mathbb{E}\big[\Tr(O_NM_1O_N^t\cdot  M_2 \cdot O_NM_3O_N^t\cdots O_NM_{n-1}O_N^t\cdot M_n)\big]  }\\
=&- \kern-0.5em
\mathop{\sum_{k = 1}^{n-1}}_{k\mathrm{\ odd}}
\mathbb{E}\big[\Tr(O_N M_1 O_N^t \cdots O_N M_{k} O_N^t \cdot (M_{k+1}\cdots  O_NM_{n-1}O_N^t\cdot M_n)^t)\big]\\
+&
\mathop{\sum_{k = 1}^{n-1}}_{k\mathrm{\ odd}}
\mathbb{E}\big[\Tr(O_N M_1O_N^t\cdots O_NM_{k}O_N^t)\cdot \Tr(M_{k+1}\cdots  O_NM_{n-1}O_N^t\cdot M_n)\big]\\
+&
\mathop{\sum_{k = 3}^{n-1}}_{k\mathrm{\ odd}}
\mathbb{E}\big[\Tr(O_N M_1O_N^t\cdots M_{k-1}\cdot (O_NM_{k}O_N^t\cdots O_NM_{n-1}O_N^t\cdot  M_n)^t)\big]\\
-&
\mathop{\sum_{k = 3}^{n-1}}_{k\mathrm{\ odd}}
\mathbb{E}\big[\Tr(O_N M_1O_N^t\cdots M_{k-1})\cdot \Tr(O_NM_{k}O_N^t\cdots O_NM_{n-1}O_N^t\cdot  M_n)\big]
\end{align*}
\end{proposition}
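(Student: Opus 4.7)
The plan is to apply the integration by parts formula (\ref{intbypart}) with $f = \id$ and $g(O) = M_1 O^t M_2 O M_3 O^t \cdots O M_{n-1} O^t M_n$, so that $f(O)g(O)$ is the argument of the trace on the left-hand side. Composing both sides of (\ref{intbypart}) with the multiplication map $\mu \colon M_N(\bC) \otimes M_N(\bC) \to M_N(\bC)$ followed by $\Tr$, and using $\Delta \id = (1-N)\id$ together with $\partial_{K_{ab}} \id = K_{ab} \id$, yields
\[
(N-1)\,\mathbb{E}\bigl[\Tr\bigl(O \cdot g(O)\bigr)\bigr] = \mathbb{E}\left[\sum_{a<b} \Tr\bigl(K_{ab} O \cdot \partial_{K_{ab}} g(O)\bigr)\right].
\]

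The next step is to expand $\partial_{K_{ab}} g$ via the Leibniz rule. Labelling the slots in the full product as $O_1 M_1 O_2 M_2 \cdots O_n M_n$ with $O_k = O$ for $k$ odd and $O_k = O^t$ for $k$ even, $\partial_{K_{ab}} g$ is a sum over $k \in \{2, \ldots, n\}$ of terms in which $O_k$ is replaced by $K_{ab} O$ (when $k$ is odd) or by $-O^t K_{ab}$ (when $k$ is even). After multiplying by $K_{ab} O$ on the left, each such term produces a trace of the shape $\epsilon_k \Tr(K_{ab} A_k K_{ab} B_k)$, where $\epsilon_k = (-1)^{k+1}$ and $A_k, B_k$ split the full product at the position of the second $K_{ab}$: for $k$ even, $A_k = O_1 M_1 O_2 \cdots O_{k-1} M_{k-1} O_k$ and $B_k = M_k O_{k+1} \cdots O_n M_n$, while for $k$ odd with $k \geq 3$, $A_k = O_1 M_1 \cdots O_{k-1} M_{k-1}$ and $B_k = O_k M_k \cdots O_n M_n$.

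The decisive step is the trace identity
\[
\sum_{a<b} \Tr(K_{ab} A K_{ab} B) = \Tr(A B^t) - \Tr(A)\Tr(B),
\]
obtained by applying the bilinear form $(X, Y) \mapsto \Tr(A X B Y)$ to the relation $\sum_{a<b} K_{ab} \otimes K_{ab} = P - T$ established earlier, together with the direct computations $\sum_{a,b} \Tr(A E_{ab} B E_{ab}) = \Tr(A B^t)$ (the $P$-contribution) and $\sum_{a,b} \Tr(A E_{ab} B E_{ba}) = \Tr(A) \Tr(B)$ (the $T$-contribution). Substituting this identity, the $k$-th contribution becomes $\epsilon_k \mathbb{E}[\Tr(A_k B_k^t) - \Tr(A_k)\Tr(B_k)]$. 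The even-$k$ terms ($k \in \{2, 4, \ldots, n\}$, sign $-1$), after reindexing $k \mapsto k-1$ to odd values in $\{1, 3, \ldots, n-1\}$, match the first two sums of the proposition; the odd-$k$ terms with $k \geq 3$ (sign $+1$) match the last two.

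The main obstacle is simply the bookkeeping: one must carefully track the parity-dependent splitting positions of the product, the sign arising from differentiating $O^t$, and the reindexing that converts even-position contributions into sums indexed by odd $k$. Once the key trace identity is in place, the remaining work is a routine substitution that aligns each contribution with one of the four sums in the statement.
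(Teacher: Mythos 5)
Your proposal is correct and follows essentially the same route as the paper's proof: apply the integration-by-parts identity \eqref{intbypart} with $f=\id$ and the same $g$, compose with $X\otimes Y\mapsto \Tr(XY)$, expand $\partial_{K_{ab}}g$ by Leibniz, and reduce the double sum via $\sum_{a<b}K_{ab}\otimes K_{ab}=P-T$ to the two trace identities $\sum_{a,b}\Tr(E_{ab}XE_{ab}Y)=\Tr(XY^t)$ and $\sum_{a,b}\Tr(E_{ab}XE_{ba}Y)=\Tr(X)\Tr(Y)$. The signs, splitting positions, and reindexing all check out against the four sums in the statement.
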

Proposition \ref{prop:int_by_parts} can be seen as an
orthogonal version of the Sch\-winger-Dyson (or master loop)
equation for unitary matrices (see \cite[Proof of Theorem
  5.4.10.]{agz} and \cite{gm,gn}).
\begin{proof}
For legibility we shall write $O$ instead of $O_N$. In order
to compute
$$\mathbb{E}\left[\Tr(O M_1 O^t M_2 O M_3 O^t \cdots O M_{n-1}O ^t M_n)\right]=\mathbb{E}\left[\Tr(O g(O))\right],$$
where $g(O)=M_1 O^t M_2 O M_3 O^t \cdots O M_{n-1} O^t M_n$,
we will proceed using integration by parts \eqref{intbypart}. Recall that
\begin{equation*}
\mathbb{E}[\Gamma(f,g)(O)]=\mathbb{E}[-\Delta f(O)\otimes g(O)]
\end{equation*}
where $f=id$, and $g(O)=M_1O^tM_2OM_3O^t\cdots OM_{n-1}O^t M_n$. We have
$$-\Delta f \otimes g=(N-1)id\otimes g$$
and
\[
\Gamma(id,g)=\sum_{1\leq a< b\leq N}K_{ab}id\otimes
\partial_{K_{ab}}g.
\]
Using $\partial_{K_{ab}}id=K_{ab}id$ and
$\partial_{K_{ab}}\iota=-\iota K_{ab}$, we compute more
explicitly
\begin{align*}
\partial_{K_{ab}}g(O)&=\sum_{\substack{1\leq k \leq n\\k\text{ odd}}}M_1O^t\cdots OM_{k}(-O^tK_{ab})M_{k+1}\cdots M_{n-1}O^t M_n\\
&+\sum_{\substack{3\leq k \leq n-1\\k\text{ odd}}}M_1O^t\cdots M_{k-1}K_{ab}OM_{k}O^t\cdots M_{n-1}O^t M_n,
\end{align*}
from which we get
\begin{align*} \lefteqn{
\Gamma(\id,   g)(O) } \\
 = &
\mathop{\sum_{1 \leq k \leq n-1,}}_{k \text{ odd} }
\sum_{1 \leq a < b \leq N} \kern-1em 
-K_{ab}O \otimes M_1O^t \cdots M_{k}O^tK_{ab}M_{k+1} \cdots M_{n-1}O^t M_n \\
+ & 
\mathop{\sum_{3 \leq k \leq n-1,}}_{k \text{ odd} }
\sum_{1\leq a< b\leq N} 
K_{ab}O\otimes M_1O^t\cdots M_{k-1}K_{ab}OM_{k}\cdots M_{n-1}O^t M_n\\
=&\sum_{\substack{1\leq k \leq n-1,\\k\text{ odd}}}\sum_{1\leq a, b\leq N}-E_{ab}O\otimes M_1O^t\cdots M_{k}O^tE_{ab}M_{k+1}\cdots M_{n-1}O^t M_n\\
+&\sum_{\substack{1\leq k \leq n-1,\\k\text{ odd}}}\sum_{1\leq a, b\leq N}E_{ab}O\otimes M_1O^t\cdots M_{k}O^tE_{ba}M_{k+1}\cdots M_{n-1}O^t M_n\\
+&\sum_{\substack{3\leq k \leq n-1,\\k\text{ odd}}}\sum_{1\leq a, b\leq N} E_{ab}O\otimes M_1O^t\cdots M_{k-1}E_{ab}OM_{k}\cdots M_{n-1}O^t M_n\\
-&\sum_{\substack{3\leq k \leq n-1,\\k\text{ odd}}}\sum_{1\leq a, b\leq N} E_{ab}O\otimes M_1O^t\cdots M_{k-1}E_{ba}OM_{k}\cdots M_{n-1}O^t M_n,
\end{align*}
where we used again $$\sum_{1\leq a< b\leq N}K_{ab}\otimes K_{ab}=P-T.$$
Now, we consider the map $F(X\otimes Y)=\Tr(XY)$, and the equation~\eqref{intbypart} reduces to
\begin{equation}
\mathbb{E}[F(-\Delta f(O_N)\otimes g(O_N)]=\mathbb{E}[F(\Gamma(f,g)(O_N))].\label{intbyparttwo}
\end{equation}
One one hand,
\begin{align*}
F(-\Delta f(O_N)\otimes g(O)=&F((N-1)O\otimes g(O))\\
=&(N-1)\Tr(Og(O)).
\end{align*}
On the other hand, using 
\[
\sum_{a,b}\Tr(E_{ab}XE_{ab}Y)=\Tr(XY^t) \text{ and } \sum_{a,b}\Tr(E_{ab}XE_{ba}Y)=\Tr(X)\Tr(Y), 
\]
we have
\begin{align*}\lefteqn{%
F(\Gamma(f,g)(O)) } \\
= & \mathop{\sum_{1 \leq k \leq n-1, }}_{k \text{ odd} }
\sum_{1\leq a, b\leq N}-\Tr(E_{ab}O M_1O^t\cdots OM_{k}O^tE_{ab}M_{k+1}\cdots  OM_{n-1}O^t\cdot M_n)\\
+ &
\sum_{\substack{1\leq k \leq n-1,\\k\text{ odd}}}\sum_{1\leq a, b\leq N}\Tr(E_{ab}O M_1O^t\cdots OM_{k}O^tE_{ba}M_{k+1}\cdots  OM_{n-1}O^t\cdot M_n)\\
+ &
\sum_{\substack{1\leq k \leq n-1,\\k\text{ odd}}}\sum_{1\leq a, b\leq N}\Tr( E_{ab}O M_1O^t\cdots M_{k-1}E_{ab}OM_{k}O^t\cdots OM_{n-1}O^t\cdot  M_n)\\
+ &
\sum_{\substack{1\leq k \leq n-1,\\k\text{ odd}}}\sum_{1\leq a, b\leq N} -\Tr(E_{ab} O M_1O^t\cdots M_{k-1}E_{ba}OM_{k}O^t\cdots OM_{n-1}O^t\cdot  M_n)\\
= &
-\sum_{\substack{1\leq k \leq n-1\\k\text{ odd}}}\Tr(O M_1O^t\cdots OM_{k}O^t\cdot (M_{k+1}\cdots  OM_{n-1}O^t\cdot M_n)^t)\\
+ &
\sum_{\substack{1\leq k \leq n-1\\k\text{ odd}}}\Tr(O M_1O^t\cdots OM_{k}O^t)\cdot \Tr(M_{k+1}\cdots  OM_{n-1}O^t\cdot M_n)\\
+ &
\sum_{\substack{3\leq k \leq n-1\\k\text{ odd}}}\Tr(O M_1O^t\cdots M_{k-1}\cdot (OM_{k}O^t\cdots OM_{n-1}O^t\cdot  M_n)^t)\\
- &
\sum_{\substack{3\leq k \leq n-1\\k\text{ odd}}}\Tr(O M_1O^t\cdots M_{k-1})\cdot \Tr(OM_{k}O^t\cdots OM_{n-1}O^t\cdot  M_n)
\end{align*}
So \eqref{intbyparttwo} can be written
\begin{align*}
&(N-1)\cdot \mathbb{E}\left[\Tr(O_NM_1O_N^t\cdot  M_2 \cdot O_NM_3O_N^t\cdots O_NM_{n-1}O_N^t\cdot M_n)\right]=\\
&-\sum_{\substack{1\leq k \leq n-1\\k\text{ odd}}}\mathbb{E}\left[\Tr(O_N M_1O_N^t\cdots O_NM_{k}O_N^t\cdot (M_{k+1}\cdots  O_NM_{n-1}O_N^t\cdot M_n)^t)\right]\\
&+\sum_{\substack{1\leq k \leq n-1\\k\text{ odd}}}\mathbb{E}\left[\Tr(O_N M_1O_N^t\cdots O_NM_{k}O_N^t)\cdot \Tr(M_{k+1}\cdots  O_NM_{n-1}O_N^t\cdot M_n)\right]\\
&+\sum_{\substack{3\leq k \leq n-1\\k\text{ odd}}}\mathbb{E}\left[\Tr(O_N M_1O_N^t\cdots M_{k-1}\cdot (O_NM_{k}O_N^t\cdots O_NM_{n-1}O_N^t\cdot  M_n)^t)\right]\\
&-\sum_{\substack{3\leq k \leq n-1\\k\text{ odd}}}\mathbb{E}\left[\Tr(O_N M_1O_N^t\cdots M_{k-1})\cdot \Tr(O_NM_{k}O_N^t\cdots O_NM_{n-1}O_N^t\cdot  M_n)\right]
\end{align*}
\end{proof}

\begin{remark}
Let us consider the statement of Proposition
\ref{prop:int_by_parts} when $n = 1$. We have
\[
(N - 1) \rE(\Tr(OM_1 O^t M_2))
=
\rE(\Tr(O M_1 O^t)\Tr(M_2)) - \rE(\Tr(O M_1 O^t M_2^t))
\]
\[
=
\Tr(M_1) \Tr(M_2) - \rE(\Tr(O M_1 O^t M_2^t)), 
\]
and then 
\[
(N - 1) \rE(\Tr(OM_1 O^t M_2^t))
=
\rE(\Tr(O M_1 O^t)\Tr(M_2^t)) - \rE(\Tr(O M_1 O^t M_2))
\]
\[
=
\Tr(M_1) \Tr(M_2) - \rE(\Tr(O M_1 O^t M_2)). 
\]
Putting these together we compute that
\[
\rE(\Tr( O M_1 O^t M_2)) = N^{-1}\Tr( M_1) \Tr(M_2).
\]
When we use Proposition \ref{prop:int_by_parts}, we will be
assuming that the $M_i$'s are random but independent from
$O$. We can then write
\[
\mathbb{E}\big(\Tr(O M_1 O^t  M_2 \cdots O M_{n-1}O^t M_n)\big)
\]
as double integral (by Fubini's theorem). Since our
functions $f$ and $g$ are polynomial, the derivative with
respect to $t$ is also continuous, thus justifying
differentiation under the integral sign. This elevates the
conclusion of Proposition \ref{prop:int_by_parts} to the
case where the $M_i$'s are independent from $O$. For
example, when $n=2$ we get
\[
\rE(\Tr( O M_1 O^t M_2)) = N^{-1}\rE( \Tr( M_1) \Tr(M_2) ).
\]
\end{remark}


\section{Asymptotic Freeness of Orthogonally Invariant Ensembles}\label{sec:asymptotic freeness of orthogonally invariant matrices}

Suppose $\cA_{1, \sN}, \dots, \cA_{s, \sN} \subseteq
M_N(\cL^{\infty-})$ are symmetric subalgebras of $N \times
N$ random matrices, such that the entries of the ensembles
form independent sets of random variables. The notations
$o(N^{-k})$ and $O(N^{-k})$ mean as $N \to \infty$.

We assume that each subalgebra $\cA_{j, \sN}$ is finitely
generated and the number of generators is independent of
$N$. We also assume that we have $n$ polynomials, which do
not depend on $N$, and that $P_1, \dots, P_n \in \cA_{j,
  \sN}$ are the result of applying the polynomials to the
generators.

We assume that the elements of each $\cA_{i, \sN}$ have a
limit real second order distribution and a limit real
infinitesimal distribution. The form of the second order
distribution will not be important, but the part we need is
the existence of limits for cumulants of traces and the
infinitesimal law. This means that if, for some $j$, $P_1,
\dots, P_n \in \cA_{j, \sN}$ are our polynomials, then we
have that for each $i$ we have
\begin{equation}\label{eq:assumption one}
\rE(\tr(P_i)) = \tau(p_i) + N^{-1} \tau'(p_i) + o(N^{-1}),
\end{equation}
and that
\begin{equation}\label{eq:assumption two}
k_2(\Tr(P_1), \Tr(P_2)) \to \tau_2(p_1, p_2) \mbox{\ as\ } N \to \infty
\end{equation}
and that
\begin{equation}\label{eq:assumption three}
k_r(\Tr(P_1), \dots, \Tr(P_r)) = o(1) \quad\mbox{for\ } r \geq 3
\end{equation}

We call property (\ref{eq:assumption one}) the existence of
a \textit{limit real infinitesimal law}, and properties
(\ref{eq:assumption two}), and (\ref{eq:assumption three})
the existence of a \textit{limit real second order law}.

\begin{remark}
A careful examination of the proof of Theorem 41 of
\cite{mp} shows that one only needs to assume $(iii)$ with
$o(1)$ replaced by $O(1)$. As this is the only place where
(\ref{eq:assumption one}) is used, we could just as well
assume that $k_r(\Tr(P_1), \dots, \Tr(P_r)) = o(1)$ for $r
\geq 3$. See remark 31 of \cite{mp}.

In \cite[Theorem 54]{mp} it was shown that if $\{\cA_{1,
  \sN}\}_\sN$ and $\{\cA_{2, sN}\}_\sN$ satisfy
(\ref{eq:assumption two}) and (\ref{eq:assumption three}),
then the algebra they generate also satisfies
(\ref{eq:assumption two}) and (\ref{eq:assumption three})
provided that at least one is orthogonally invariant. By
induction on $s$ and the associative law
(\cite[Prop. 29]{mp}) we get the same conclusion for $s \geq
2$ provided all or all but one of the algebras $\cA_i$ are
orthogonally invariant. In addition an explicit rule was
given for computing the limit in (\ref{eq:assumption two})
from the individual limit distributions of the $\cA_{i,
  \sN}$'s. This is the rule given by Emily Redelmeier
\cite{r} and called real second order freeness.
\end{remark}

In this section we will use the results of \cite{mp} to show
that if each of the subalgebras satisfies
(\ref{eq:assumption one}), (\ref{eq:assumption two}), and
(\ref{eq:assumption three}), then the subalgebra they
generate also satisfies (\ref{eq:assumption one}),
(\ref{eq:assumption two}), and (\ref{eq:assumption three}),
provided, again, that all or all but one are orthogonally
invariant.

We do this by a double induction. The first, or outer
induction, is on $s$ the number of subalgebras. The second,
or inner, induction is on the number of occurrences of a
fixed subalgebra in a word.

\begin{lemma}\label{lemma:outside induction}
Let $\cA_{1, \sN}, \dots, \cA_{s, \sN} \subseteq
M_N(\cL^{\infty-})$ be unital subalgebras such that the
entries of matrices from different subalgebras form
independent sets. Suppose that all, or all but one, of the
subalgebras are orthogonally invariant, and suppose that
each of the subalgebras satisfies $(\ref{eq:assumption
  one})$, $(\ref{eq:assumption two})$ and
$(\ref{eq:assumption three})$. Then the subalgebra generated
by $\cA_{1, \sN}, \dots, \cA_{s, \sN}$ satisfies
$(\ref{eq:assumption one})$, $(\ref{eq:assumption two})$ and
$(\ref{eq:assumption three})$.
\end{lemma}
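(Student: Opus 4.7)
The plan is to execute the double induction outlined just before the lemma. The outer induction reduces the problem to $s = 2$: for $s > 1$, gather the orthogonally invariant subalgebras first. By the outer induction hypothesis applied to $\cA_{2, \sN}, \dots, \cA_{s, \sN}$, the subalgebra $\cB_\sN$ they generate satisfies $(\ref{eq:assumption one})$--$(\ref{eq:assumption three})$, and $\cB_\sN$ is jointly orthogonally invariant because its generators are independent and each is orthogonally invariant (so one may conjugate all of them by the same Haar $O_\sN$ without changing the joint distribution). The lemma then follows from its $s = 2$ case applied to $\cA_{1, \sN}$ and $\cB_\sN$.

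For $s = 2$ with $\cA_{1, \sN}$ orthogonally invariant, properties $(\ref{eq:assumption two})$ and $(\ref{eq:assumption three})$ transfer to the generated subalgebra by \cite[Theorem 54]{mp}; only $(\ref{eq:assumption one})$ remains. By multilinearity it is enough to prove convergence of $N\bigl(\rE[\tr(W_\sN)] - \tau(w)\bigr)$ for each word $W_\sN = P_{1, \sN} \cdots P_{n, \sN}$ with $P_{i, \sN} \in \cA_{1, \sN} \cup \cA_{2, \sN}$. After grouping consecutive same-algebra factors and subtracting their $\tr$-expectations, we may assume $W_\sN$ is a centred alternating word: $P_{i, \sN} \in \cA_{\epsilon_i, \sN}$ with $\epsilon_i \ne \epsilon_{i+1}$ and $\tau(p_i) = 0$.

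The inner induction is on the number of $\cA_{1, \sN}$-letters in the alternating word. The base case (zero such letters) is the hypothesis on $\cA_{2, \sN}$. For the inductive step, exploit the orthogonal invariance of $\cA_{1, \sN}$ by writing each $\cA_{1, \sN}$-letter as $O_\sN M_{i, \sN} O_\sN^t$, where $O_\sN$ is Haar distributed on $O(N)$, independent of $\cA_{2, \sN}$, and $M_{i, \sN}$ has the original distribution of $P_{i, \sN}$. Up to a cyclic rotation of the trace (and, for odd word length, the harmless insertion of an identity from $\cA_{2, \sN}$), $\rE[\Tr(W_\sN)]$ has the form to which Proposition \ref{prop:int_by_parts} applies. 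Every term on the right-hand side is the expectation of a trace, or of a product of two traces, of a strictly shorter alternating word --- either because an $O_\sN \cdots O_\sN^t$ block has been consumed by a transposition or because the trace has been split into two --- so the inner induction hypothesis applies to each. Expanding
\begin{align*}
\rE[\Tr(U_\sN)] &= N \tau(u) + \tau'(u) + o(1), \\
\rE[\Tr(U_\sN)\Tr(V_\sN)] &= \rE[\Tr(U_\sN)]\rE[\Tr(V_\sN)] + k_2\bigl(\Tr(U_\sN),\Tr(V_\sN)\bigr),
\end{align*}
with $k_2 \to \tau_2(u,v)$ by $(\ref{eq:assumption two})$, yields finite expansions of both sides of the recursion in powers of $N$. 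The $N^2$ coefficients match automatically (they encode the ordinary freeness of $\tau$ established in \cite{mp}), and reading off the $N$-coefficient expresses $\tau'(w)$ as a universal combination of $\tau$ and $\tau'$ values on strictly shorter subwords, all of which converge by the inner induction hypothesis. This produces the required limit and completes the inductive step.

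The principal obstacle is the bookkeeping in the four sums of Proposition \ref{prop:int_by_parts}: the transposed suffixes $(M_{k+1}\cdots M_n)^t$ appearing there must be shown --- after centering and cyclic rearrangement --- to reassemble into precisely the space derivative terms $\tau(a_1 a_{k+1}^t)\cdots\tau(a_k a_n^t)$ of Definition \ref{def:real infinitesimal freeness}(iii), (iv), while the time derivative term $\tau\bigl(a_1\tau'(a_2\cdots a_{n-1})a_n\bigr)$ arises from the inner $\tau'$ produced by applying the induction hypothesis to the remaining block. Checking this matching --- in particular that the odd-indexed positions of Proposition \ref{prop:int_by_parts} line up with the $\cA_{1, \sN}$-letters of the alternating word, and that the two-trace terms consistently feed the pair $\tau(a_i a_j^t)$ --- is what distinguishes real infinitesimal freeness from the complex rule of \cite{fn}, and is precisely what is encapsulated in the non-tracial reformulation of Proposition \ref{prop:non-tracial case}.
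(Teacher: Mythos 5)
Your overall architecture (outer induction on $s$, reduction to the case $s=2$ by grouping subalgebras, then an inner induction driven by Proposition \ref{prop:int_by_parts}) is close to the paper's, and your observation that independent, individually orthogonally invariant ensembles are jointly orthogonally invariant is a legitimate way to organize the reduction. The gap is in the $s=2$ step, in the claim that every term on the right-hand side of Proposition \ref{prop:int_by_parts} is ``the expectation of a trace of a strictly shorter alternating word \dots because an $O_\sN\cdots O_\sN^t$ block has been consumed by a transposition.'' That is false for the first two sums of the proposition. The term $\Tr(O M_1 O^t\cdots O M_k O^t\cdot(M_{k+1}\cdots O M_{n-1}O^t M_n)^t)$ contains exactly as many factors $M_i$ (some now transposed) and exactly as many $O$-conjugated blocks as the original word: the identity $\sum_{a,b}\Tr(E_{ab}XE_{ab}Y)=\Tr(XY^t)$ reverses and transposes the suffix but consumes nothing. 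Only the third and fourth sums genuinely split the word into two shorter pieces. Consequently the inner induction on the number of $\cA_{1,\sN}$-letters is not well-founded as you have set it up: to expand the first two sums to the precision your recursion requires, you would be invoking the induction hypothesis on words of the same size.

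This is exactly why the paper does not prove the existence statement $(\ref{eq:assumption one})$ by integration by parts. It proves it via the orthogonal Weingarten expansion of $\rE(\Tr(OP_1O^tP_2\cdots OP_{r-1}O^tP_r))$ from \cite[Eq.~(33)]{mp}: each of the finitely many summands is a product of a Weingarten weight and two trace factors, each of which has a $1/N$-expansion by hypothesis, so the product does too. Only afterwards, in Lemma \ref{lemma:inside induction}, is Proposition \ref{prop:int_by_parts} used to compute the value of $\tau'$ --- and at that point the existence of the expansion for the transposed words in the first two sums is already available from Lemma \ref{lemma:outside induction}. Your route can in principle be repaired: since the whole identity is divided by $N-1$, the first two sums need only be controlled modulo $o(N)$, i.e.\ one needs only the first-order limit $\lim_\sN\rE[\tr(\cdot)]$ of those transposed words, which is supplied by \cite[Thm.~54]{mp} rather than by your induction; the two-trace terms are the only ones that genuinely require the infinitesimal expansion of strictly shorter words. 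But as written, the justification offered for the key step is incorrect and the induction does not close.
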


\begin{proof}
We prove this by induction on $s$. When $s = 1$, there is
nothing to prove because there is only one subalgebra and it
already satisfies (\ref{eq:assumption one}),
(\ref{eq:assumption two}), and (\ref{eq:assumption
  three}). So let us start the induction with $s = 2$. This
means that we only have two subalgebras. By \cite[Prop. 29
  and Thm. 54]{mp} we have that the algebra generated by
$\cA_{1, \sN}$ and $\cA_{2, \sN}$ satisfies
(\ref{eq:assumption two}), and (\ref{eq:assumption
  three}). So we must show that (\ref{eq:assumption one})
also holds. To this end we let $P_1, \dots, P_r \in
M_N(\cL^{\infty-})$ be such that $P_i \in \cA_{j_i,\sN}$
with $j_1 \not= j_2$, $j_2 \not = j_3$, \dots, $j_{r-1} \not
= j_r$. The existence of the limit real second order
distribution means, in particular, that there is an algebra
$\cA$ with involution $a \to a^t$ and a trace $\tau$ such
that
\[
\rE(\tr(P_1 \cdots P_r)) = \tau(p_1 \cdots p_r) + o(1).
\]
To prove the lemma we must replace the convergence above with the stronger statement 
\[
\rE(\Tr(P_1 \cdots P_r)) = N \tau(p_1 \cdots p_r) + \tau'(p_1 \cdots p_r) +o(1),
\]
where $\tau'(p_1 \cdots p_r)$ is some unknown (for the
moment) function of $p_1, \dots, p_r$. (Of course it is the
goal of the paper to find this function, but first we have
to prove its existence).

As noted in the proof of \cite[Prop. 52]{mp}, by traciality
we may assume that $r$ is even and that $P_1$ is from
$\cA_{1, \sN}$, and $\cA_{1, \sN}$ is orthogonally
invariant. Then as in \cite[Eq. (33)]{mp}
\begin{align}\label{eq:joint genus expansion}\lefteqn{
\rE(\Tr(P_1 \cdots P_r)) = \rE(\Tr(OP_1 O^t P_2 \cdots  O P_{r-1} O^t P_r)) }\notag \\
& =
\sum_{p, q \in \cP_2(r)}
\Wg(p, q) \  \rE(\Tr_{\pi_{p,q}^o}(Q_1, \dots, Q_{r -1}))\ \rE(\Tr_{\pi_{p, q}^e}(Q_2, \dots, Q_{r})) 
\end{align}
where $\Wg$ is the orthogonal Weingarten function, $\pi_{p,
  q}^o$ and $\pi_{p, q}^e$ are the restrictions of
$\pi_\peq$ to the odd and even numbers of $[r]$
respectively, and each $Q_i$ is either $P_i$ or $P_i^t$, all
depending on the pairings $p$ and $q$ (see \cite[Lemmas 5 \&
  13]{mp}) for the precise dependence). How the permutation
$\pi_\peq$ is obtained and the permutation $\epsilon p
\delta q \epsilon$ will be explained in \S \ref{subsec:genus
  calculations}.

Now for each of the three factors in the sum above we have
an $1/N$-expansion of the form:
\begin{align*}\lefteqn{
\rE(\Tr_{\pi_{p,q}^o}(Q_1, \dots, Q_{r -1}))}  \\
& =
N^{\#(\pi_{p,q}^o)} \tau_{\pi_{p,q}^o}(q_1, \dots, q_{r -1}) + 
N^{\#(\pi_{p,q}^o) - 1}\, \tau'_{\pi_{p,q}^o}(q_1, \dots, q_{r -1}) \\
&\mbox{}  + o(N^{\#(\pi_{p,q}^o) - 1}),
\end{align*}
\begin{align*} \lefteqn{
\rE(\Tr_{\pi_{p,q}^e}(Q_2, \dots, Q_{r})) } \\
& =
N^{\#(\pi_{p,q}^e)} \tau_{\pi_{p,q}^e}(q_2, \dots, q_{r}) + 
N^{\#(\pi_{p,q}^e)- 1} \tau'_{\pi_{p,q}^e}(q_2, \dots, q_{r}) + o(N^{\#(\pi_{p,q}^e)- 1} ),
\end{align*}
and the orthogonal Weingarten function has the well known
asymptotic expansion in $1/N$:
\[
\Wg(p, q) = w_1(p, q)N^{-n + \#(p \vee q)} + w'_1(p, q)N^{-n + \#(p \vee q) -1} + O(N^{-n + \#(p \vee q) - 2}).
\]
For a $(p, q) \in \cP_2(r)$ we have the product 
\begin{equation}\label{eq:27 terms}
\Wg(p, q) 
\rE(\Tr_{\pi_{p,q}^o}(Q_1, \dots, Q_{r -1}))
\rE(\Tr_{\pi_{p,q}^e}(Q_2, \dots, Q_{r})) 
\end{equation}
with leading order $N^{-n + \#(p \vee q) + \#(\pi_\peq^o) +
  \#(\pi^e_\peq)}$ $ = N^{-n + \#(p \vee q) +
  \#(\pi_\peq)}$. According to Propositions
\ref{prop:disconnected genus} and \ref{prop:connected genus}
(\textit{infra}), there is an integer, $g_{p,q} \geq 0$ such
that
\[
-n + \#(p \vee q) + \#(\pi_\peq)
=
\begin{cases} 1 - 2 g_{p,q} & \epsilon p \delta q \epsilon \mbox{\ leaves\ } [n]\mbox{\ invariant}\\
- g_{p,q}& \epsilon p \delta q \epsilon \mbox{\ does not leave\ } [n]\mbox{\ invariant}.
\end{cases}
\]
The only terms in (\ref{eq:27 terms}) that we are interested
in are the terms of order $N^1$ and $N^0$; we will also show
that the remaining terms are $o(N^0)$. The only way to get a
term of order $N^1$ is for $\epsilon p \delta q \epsilon$ to
leave $[n]$ invariant and $g_{p,q} = 0$. When $g_{p,q} = 0$
we are in the non-crossing case: $\epsilon p \delta q
\epsilon|_{[n]} \in \NC(r)$, and by asymptotic freeness the
coefficient of $N^1$ is $\tau(p_1 \cdots p_r)$ as claimed in
(\ref{eq:assumption one}). The term of next lower order will
be $N^{-1}$ which is smaller than $o(1)$.

Now let us consider the case when $\epsilon p \delta q
\epsilon$ does not leave $[n]$ invariant. In this case the
highest degree term we can get is $N^0$, and this only when
$g_{p,q} = 0$ and $\epsilon p \delta q \epsilon \in
S_\NC^\delta(r, -r)$. This coefficient of $N^0$ is
$\tau'(p_1 \cdots p_r)$. The terms of next lower order are
of order $N^{-n + \#(p \vee q)} o(N^{\pi_\peq})$, which when
$g_{p,q} = 0$ is $o(1)$.

These two possibilities mean that
\[
\rE(\Tr(P_1 \cdots P_r))
=
N \tau(p_1, \dots, p_r) + \tau'(p_1, \dots, p_r) + o(1).
\]
This proves the lemma when $s = 2$. 

Now suppose $s > 2$. Let $P_1, \dots P_r$ be such that $P_i
\in \cA_{j_i, \sN}$ and $j_1 \not = j_2$ \dots, $j_{r-1}
\not = j_r$. We want to show that (\ref{eq:assumption one})
holds i.e.
\[
\rE(\Tr(P_1 \cdots P_r)) = N \tau(p_1 \cdots p_r) + \tau'(p_1 \cdots p_r) +o(1).
\]
By traciality, we may assume that $j_r \not = j_1$ and that
$\cA_{j_1, \sN}$ is orthogonally invariant. Then we write
$P_1 \cdots P_r$ as $M_1 M_2 \cdots M_n$ with $n$ even,
$M_{2l-1} \in \cA_{j_1, \sN}$, and each of $M_{2l}$ in the
algebra generated by the $\cA_{j_i,\sN}$'s where $j_i \not =
j_1$, which we denote by $\tilde{\cA}_{j_2, \sN}$, just for
the duration of this proof. By our induction hypothesis we
know $\cA_{j_1, \sN}$ and $\tilde{\cA}_{j_2,\sN}$ satisfy
(\ref{eq:assumption one}), (\ref{eq:assumption two}), and
(\ref{eq:assumption three}), so by the the first part of the
proof we get that the algebra generated by $\cA_{j_1, \sN}$
and $\tilde{\cA}_{j_2, \sN}$ satisfies (\ref{eq:assumption
  one}), (\ref{eq:assumption two}), and (\ref{eq:assumption
  three}). However this last algebra is just the algebra
generated by $\cA_{1, \sN}, \dots, \cA_{s, \sN}$.
\end{proof}

In the next lemma we assume that we have subalgebras $\cA_1,
\dots, \cA_s$ satisfying the hypotheses of Lemma
\ref{lemma:outside induction}. For notational convenience we
shall make the dependence on $N$ implicit.

The next lemma is an immediate corollary of Lemma \ref{lemma:outside induction}. 

\begin{lemma}\label{lemma:first inside induction}
Let $P_1, \dots, P_r \in M_N(\cL^{\infty-})$ be such that
$P_i \in \cA_{j_i}$ with $j_1 \not= j_2$, $j_2 \not = j_3$,
\dots, $j_{r-1} \not = j_r$, and for each $i$,
$\rE(\Tr(P_i)) = \tau'(p_i) + o(1)$.

Then for $r \geq 1$
\begin{equation}\label{eq:first intermediate limit distribution}
\rE(\Tr(P_1 \cdots P_r))
= O(1)
\end{equation} 
\end{lemma}

\begin{lemma}\label{lemma:inside induction}
Let $P_1, \dots, P_r \in M_N(\cL^{\infty-})$ be such that
$P_i \in \cA_{j_i}$ with $j_1 \not= j_2$, $j_2 \not = j_3$,
\dots, $j_{r-1} \not = j_r$, $j_r \not = j_1$, and for each
$i$, $\rE(\Tr(P_i)) = \tau'(p_i) + o(1)$. Let $V \in
\ker(j)$, with $V = \{l_1, \dots, l_n\}$ be the block
containing $1$ with $1 = l_1$, $l_{m-1} + 1 < l_m$, and $l_n
< r$.

Then for $r > 1$
\begin{equation}\label{eq:intermediate limit distribution}
\rE(\Tr(P_1 \cdots P_r))
=
\mathop{\sum_{m=2}^{n}}
\tau(p_1 \cdots p_{l_m-1} p_r^t \cdots p_{l_m}^t )  + O(N^{-1}).
\end{equation}
\end{lemma}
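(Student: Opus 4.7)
The plan is to apply the integration-by-parts identity of Proposition~\ref{prop:int_by_parts} to $\rE[\Tr(P_1\cdots P_r)]$, then identify which of the resulting terms survive at order $O(1)$ and which are absorbed into $O(N^{-1})$. Under the hypothesis inherited from Lemma~\ref{lemma:outside induction}, at most one $\cA_j$ fails to be orthogonally invariant, and since the claimed formula is cyclically invariant under traciality (as can be verified by using freeness to re-express both sides) and since $j_r\neq j_1$, we may cyclically rotate the product to arrange that $\cA_{j_1}$ itself is orthogonally invariant.

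Regroup the product as
\[
P_1 P_2 \cdots P_r = P_1\,Q_2\,P_{l_2}\,Q_3\cdots Q_n\,P_{l_n}\,Q_{n+1},
\]
where each $Q_m = P_{l_{m-1}+1}\cdots P_{l_m-1}$ for $m=2,\ldots,n$ and $Q_{n+1} = P_{l_n+1}\cdots P_r$ lies in the subalgebra $\tilde{\cA}$ generated by the $\cA_j$ with $j\neq j_1$. By orthogonal invariance of $\cA_{j_1}$ and its independence from $\tilde{\cA}$, replace each $P_{l_m}$ by $O_N P_{l_m} O_N^t$ for a Haar orthogonal $O_N$ independent of everything. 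Applying Proposition~\ref{prop:int_by_parts} to the $2n$ matrices $(P_1, Q_2, P_{l_2}, Q_3, \ldots, P_{l_n}, Q_{n+1})$ conditionally on the random matrices yields
\[
(N-1)\,\rE[\Tr(P_1 \cdots P_r)] = S_1 + S_2 + S_3 + S_4,
\]
where $S_1, S_3$ are sums of single-trace terms containing a transposed sub-product, and $S_2, S_4$ are sums of products of two traces, each indexed by odd $k \in \{1, \ldots, 2n-1\}$.

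The single-trace terms in $S_1, S_3$, after $O_N$ is integrated out using orthogonal invariance of $\cA_{j_1}$, become traces of alternating products of $P_i$'s and $P_i^t$'s of total length $r$ whose expectations Lemma~\ref{lemma:outside induction} bounds by $O(1)$; division by $N-1$ puts these in $O(N^{-1})$. For the product-of-traces terms in $S_2, S_4$, write $\rE[\Tr(X)\Tr(Y)] = \rE[\Tr(X)]\rE[\Tr(Y)] + \mathrm{cov}(\Tr(X),\Tr(Y))$: the covariance is $O(1)$ by~(\ref{eq:assumption two}) and is absorbed into the error. For the factored piece, each odd $k$ corresponds to splitting the product at some $l_m \in V$, producing a left trace over a subproduct starting at $P_1$ (with a transposed tail) and a right trace over its complement. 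Using freeness of $\cA_{j_1}$ from $\tilde{\cA}$ together with the centering $\tau(p_i)=0$, the only splits whose two $\tau$-values combine to give a non-zero leading-order $N$ contribution are precisely those pairing $(p_1,\ldots,p_{l_m-1})$ against the transposed tail $(p_{l_m},\ldots,p_r)^t$; collecting these over $m=2,\ldots,n$ with the signs from Proposition~\ref{prop:int_by_parts} and dividing by $N-1$ reproduces $\sum_{m=2}^n \tau(p_1 \cdots p_{l_m-1}\,p_r^t \cdots p_{l_m}^t) + O(N^{-1})$.

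The main obstacle is the bookkeeping: identifying, across the four sums with their different signs and ranges of $k$, precisely which terms survive at order $O(1)$ after division by $N-1$, and matching the surviving scalar pairings to the explicit annular formula. Recognizing that the transposes produced by integration by parts generate exactly the ``annular contractions'' $p_i \leftrightarrow p_j^t$ linking each distinguished position $l_m$ to position $r$---a contraction absent in the unitary case---is the key conceptual point that distinguishes the real orthogonal setting from the complex unitary one.
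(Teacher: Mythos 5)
Your setup matches the paper's: rotate so that $\cA_{j_1}$ is orthogonally invariant, group the word into $2n$ alternating factors $M_1=P_{l_1}, M_2, M_3=P_{l_2},\dots$, conjugate the odd-indexed factors by a Haar orthogonal matrix, and apply Proposition~\ref{prop:int_by_parts}. But your bookkeeping of which of the four resulting sums carries the leading order is exactly backwards, and this is a genuine error, not a presentational one. You claim that \emph{all} single-trace terms ($S_1$ and $S_3$) are $O(N^{-1})$ because ``Lemma~\ref{lemma:outside induction} bounds their expectations by $O(1)$.'' Lemma~\ref{lemma:outside induction} gives $\rE(\Tr(\cdot))=N\tau(\cdot)+O(1)$, so the bound $O(1)$ holds only when the limiting trace $\tau$ of the resulting word vanishes. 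For the first family ($k$ odd, splitting after an odd-indexed $M_k$) the resulting word $p_1\cdots p_{l_k}\,p_r^t\cdots p_{l_k+1}^t$ is centred and \emph{cyclically} alternating (the wrap-around index is $j_{l_k+1}\neq j_1$), so $\tau=0$ and the term is indeed $O(N^{-1})$. But for the second family the resulting word is $p_1\cdots p_{l_m-1}\,p_r^t\cdots p_{l_m}^t$, which is \emph{not} cyclically alternating because $j_{l_m}=j_1$; its $\tau$ does not vanish, the trace is genuinely $O(N)$, and after dividing by $N-1$ it contributes $\tau(p_1\cdots p_{l_m-1}p_r^t\cdots p_{l_m}^t)+O(N^{-1})$. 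Summing over $m=2,\dots,n$ this is precisely the right-hand side of (\ref{eq:intermediate limit distribution}). That is where the main term lives.

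Conversely, the product-of-traces terms $S_2,S_4$, from which you try to extract the main term, contribute nothing at order $1$: after separating off the covariance (which is $O(1)$ by (\ref{eq:assumption two})--(\ref{eq:assumption three})), each remaining factor $\rE(\Tr(P_1\cdots P_{l_m}))$ and $\rE(\Tr(P_{l_m+1}\cdots P_r))$ is the expected trace of a centred alternating word, hence $O(1)$ (the paper establishes this by induction on $n$, which you never set up), so the product divided by $N-1$ is $O(N^{-1})$. Note also that the product-of-traces terms produced by Proposition~\ref{prop:int_by_parts} contain no transposes at all, so your description of them as pairing $(p_1,\dots,p_{l_m-1})$ against a ``transposed tail'' conflates them with the single-trace terms. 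To repair the argument you need to (a) distinguish the two single-trace families by whether the resulting word is cyclically alternating, keeping the second family as the main term, and (b) run the induction on $n$ (or at least invoke freeness plus Lemma~\ref{lemma:outside induction}) to kill the factored two-trace terms.
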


\begin{proof} 
By traciality we may assume that $\cA_{j_1}$ is orthogonally invariant.

As in the hypothesis, $ V \in \ker(i) \in \cP(r)$ is the
block containing $1$ and we write $V = \{ l_1, \dots, l_n\}$
with $1 = l_1$, $l_{k-1} + 1 < l_k$ and $l_n < r$ (because
we have assumed that $j_r \not = j_1$). Let
\[
M_1 = P_1, \quad M_2 = P_2 \cdots P_{l_2 -1},\quad M_3 = P_{l_2},\]
and in general 
\[
M_{2k-1} = P_{l_k} \mbox{\ and\ } M_{2k} = P_{l_k + 1} \cdots P_{l_{k+1} - 1}. 
\]
Then 
\[
P_1 \cdots P_r = M_1 M_2 \cdots M_{2n-1} M_{2n}
\]
with $M_1, M_3, \dots, M_{2k-1}$ all in $\cA_{j_1}$ and
$M_2, M_4, \dots, M_{2n}$ all in $\cA_{j_2} \cup \cdots \cup
\cA_{j_r}$. Since $\cA_{j_1}$ is orthogonally invariant we
have
\[
\rE(\Tr(M_1 M_2 \cdots M_{2n-1} M_{2n}) 
=
\rE(\Tr(OM_1 O^t M_2 \cdots O M_{2n-1} O^t M_{2n})).
\]
In the expression on the right each $M_{2k-1}$ has been
replaced by $OM_{2k-1}O^t$ and each $M_{2k}$ has been left
unchanged. In Proposition \ref{prop:int_by_parts} we assumed
that the $M_k$ matrices were constant and here they are
random but independent of the $O$ matrices, so by using
Fubini's theorem we may apply Proposition
\ref{prop:int_by_parts} to obtain that
\begin{align}\lefteqn{
\rE(\Tr(P_1 \cdots P_r)) = \rE(\Tr(M_1 M_2 \cdots M_{2n - 1} M_{2 n}))} \notag \\
&\mbox{} =
\rE(\Tr(OM_1O^t M_2 \cdots OM_{2n - 1}O^t M_{2 n})) \notag \\
& \mbox{} =
\label{eq:first case}
\frac{-1}{N - 1} \mathop{\sum_{k=1}^{2n - 1}}_{k \mathrm{\ odd}}
\rE( \Tr(O M_1 O^t M_2 \cdots O M_kO^t (M_{k + 1} O \cdots O^t M_{2n})^t )) \\
& \mbox{} +
\label{eq:second case}
\frac{1}{N - 1} \mathop{\sum_{k=3}^{2n - 1}}_{k \mathrm{\ odd}}
\rE( \Tr(O M_1 O^t M_2 \cdots O^t M_{k-1} (O M_{k} O^t \cdots O^t M_{2n})^t )) \\
& \mbox{} +
\label{eq:third case}
\frac{1}{N - 1} \mathop{\sum_{k=1}^{2n - 1}}_{k \mathrm{\ odd}}
\rE( \Tr(O M_1 O^t M_2 \cdots O M_{k}O^t) \Tr(M_{k+1} O \cdots O^t M_{2n}) )) \\
& \mbox{} -
\label{eq:fourth case}
\frac{1}{N - 1} \mathop{\sum_{k=3}^{2n - 1}}_{k \mathrm{\ odd}}
\rE( \Tr(O M_1 O^t M_2 \cdots O^t M_{k-1}) \Tr(OM_{k} O^t \cdots O^t M_{2n}) )).
\end{align}

Now let us consider the limit as $N \to \infty$ of each of
the four terms. Let us start with (\ref{eq:first case}). As
$k$ is odd we have $M_k \in \cA_{j_1}$ and $M_{2n} \in
\cA_{j_r}$ with $j_1 \not = j_r$, thus
\begin{align*} \lefteqn{
\frac{1}{N - 1}\rE( \Tr(O M_1 O^t M_2 \cdots O M_kO^t (M_{k + 1} O \cdots O^t M_{2n})^t ))} \\
& \mbox{} =
\frac{1}{N - 1}\rE( \Tr(O M_1 O^t M_2 \cdots O M_kO^t M_{2n}^t O \cdots O^t M_{k+1})) \\
& \mbox{} =
\frac{1}{N - 1}\rE( \Tr(M_1 M_2 \cdots M_k M_{2n}^t  \cdots  M_{k+1}^t)) \\
& \mbox{} =
\frac{1}{N - 1}\rE( \Tr(P_1 P_2 \cdots P_{l_k} P_{r}^t  \cdots  P_{l_k+1}^t)) = O(N^{-1}),\\
\end{align*}
where the last equality holds by Lemma \ref{lemma:outside induction}.

Next consider (\ref{eq:second case}), with $k = 2m - 1 \geq 3$ and $2 \leq m \leq n$.
\begin{align*} \lefteqn{
\frac{1}{N - 1}\rE( \Tr(O M_1 O^t M_2 \cdots O^t M_{k-1} (O M_{k} O^t \cdots O^t M_{2n})^t ))} \\
& \mbox{} =
\frac{1}{N - 1}\rE( \Tr(O M_1 O^t M_2 \cdots O^t M_{2(m-1)} M_{2n}^t O \cdots O M_{2m-1}^t O^t)) \\
& \mbox{} =
\frac{1}{N - 1}\rE( \Tr(M_1 M_2 \cdots M_{2(m-1)} M_{2n}^t  \cdots  M_{2m-1}^t)) \\
& \mbox{} =
\tau(p_1 \cdots p_{l_m-1} p_r^t \cdots p_{l_m}^t ) + O(N^{-1}),
\end{align*}
where we obtain the last equality because, by Lemma
\ref{lemma:outside induction}, Eq.~(\ref{eq:assumption one})
holds:
\begin{align*}\lefteqn{
\rE(\Tr(P_1 \cdots P_{l_m-1} P_r^t \cdots P_{l_m}^t ) } \\
& =
N \tau(p_1 \cdots p_{l_m-1} p_r^t \cdots p_{l_m}^t )
+ 
\tau'(p_1 \cdots p_{l_m-1} p_r^t \cdots p_{l_m}^t )
+
o(1).
\end{align*}
Next consider (\ref{eq:third case}), with $k = 2m - 1 \geq
3$ and $1 \leq m \leq n$.  According to our notation we have
\[
\rE(\Tr(M_1 M_2 \cdots M_{k}))\ \rE(\Tr(M_{k+1}  \cdots  M_{2n}) ))
\]
\[
=
\rE(\Tr( P_1 \cdots P_{l_m}))\ \rE(\Tr( P_{l_m +1} \cdots P_{r})),
\]
and by induction (on $n$) both of these factors are bounded
functions of $N$ by Lemma \ref{lemma:first inside
  induction}. Hence
\begin{align*}\lefteqn{
\frac{1}{N - 1}
\rE( \Tr(O M_1 O^t M_2 \cdots O M_{k}O^t) \Tr(M_{k+1} O \cdots O^t M_{2n}) ))} \\
& =
\frac{1}{N - 1} \cov(\Tr(O M_1 O^t M_2 \cdots O M_{k}O^t), 
                     \Tr(M_{k+1} O \cdots O^t M_{2n}) )) \\
& \quad+
\frac{1}{N - 1} \rE(\Tr(O M_1 O^t M_2 \cdots O M_{k}O^t)) \
                 \rE(\Tr(M_{k+1} O \cdots O^t M_{2n}) )) \\
& =
\frac{1}{N - 1} \rE(\Tr(M_1 M_2 \cdots M_{k})) \
                 \rE(\Tr(M_{k+1}  \cdots  M_{2n}) )) + O(N^{-1}) \\
&=
                 \frac{1}{N - 1} \rE(\Tr(P_1 \cdots P_{l_m})) \
                 \rE(\Tr(P_{l_m+1}  \cdots  P_{r}) )) + O(N^{-1}) \\
& = O(N^{-1}).
\end{align*}
Finally consider (\ref{eq:fourth case}), $k = 2m - 1$ odd
with $m \geq 2$ we have
\[
\rE(\Tr(M_1 M_2 \cdots M_{k-1}))\ \rE(\Tr(M_{k}  \cdots  M_{2n}) ))
\]
\[
=
\rE(\Tr( P_1 \cdots P_{l_m -1 }))\ \rE(\Tr( P_{l_m} \cdots P_{r})).
\]
Now again by Lemma \ref{lemma:first inside induction} , both
of these factors are bounded functions of $N$, hence
\begin{align*}\lefteqn{
\frac{1}{N - 1}
\rE( \Tr(O M_1 O^t M_2 \cdots O^t M_{k-1}) \Tr(O M_{k} O^t \cdots O^t M_{2n}) ))} \\
& =
\frac{1}{N - 1} \cov(\Tr(O M_1 O^t M_2 \cdots O^t M_{k-1}), 
                     \Tr(OM_{k} O^t \cdots O^t M_{2n}) )) \\
& \quad+
\frac{1}{N - 1} \rE(\Tr(O M_1 O^t M_2 \cdots O^t M_{k-1})) \
                 \rE(\Tr(O M_{k} O^t \cdots O^t M_{2n}) )) \\
& =
\frac{1}{N - 1} \rE(\Tr(M_1 M_2 \cdots M_{k-1})) \
                 \rE(\Tr(M_{k}  \cdots  M_{2n}) )) + O(N^{-1}) \\
&=
                 \frac{1}{N - 1} \rE(\Tr(P_1 \cdots P_{l_m-1})) \
                 \rE(\Tr(P_{l_m}  \cdots  P_{r}) )) + O(N^{-1}) \\
& = O(N^{-1}).
\end{align*}
Thus we have 
\[
\rE(\Tr(P_1 \cdots P_r))
=
\sum_{m=2}^n \tau(p_1 \cdots p_{l_m-1} p_r^t \cdots p_{l_m}^t ) + O(N^{-1})
\]
\end{proof}

\begin{remark}
When $n = 1$ in Equation (\ref{eq:intermediate limit
  distribution}), the Lemma implies that $\rE(\Tr(P_1
\ab\cdots\ab P_r)) = o(1)$.
\end{remark}

The idea of the next lemma is somewhat standard, for the
convenience of the reader we give here a proof.

\begin{lemma}\label{lemma:standard reduction}
Suppose $(\cA, \phi)$ is a non-commutative probability space
and $\cA_1,\ab \dots, \cA_s \subseteq \cA$ are unital
subalgebras which are freely independent. Suppose $a_1,
\dots, a_n \in \cA$ with $a_i \in \cA_{j_i}$ and $\phi(a_i)
= 0$ for $1 \leq i \leq n$. Suppose that there is $1 \leq k
\leq n-1$ such that $j_1 \not = j_2$, \dots, $j_{k-1} \not =
j_k$, $j_{k+1} \not = j_{k+2}$, \dots, $j_{n-1} \not =
j_n$. Then $\phi(a_1 \cdots a_n) = 0$ unless $n$ is even and
$k = n/2$, in which case we have
\begin{equation}\label{eq:reduction by folding}
\phi(a_1 \cdots a_n) = \phi(a_1 a_n) \cdots \phi(a_k a_{k+1}).
\end{equation}
\end{lemma}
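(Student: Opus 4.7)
The plan is to proceed by strong induction on $n$, with a case split according to whether $j_k = j_{k+1}$. The base case $n = 2$ forces $k = 1$, and the stated formula reduces to the tautology $\phi(a_1 a_2) = \phi(a_1 a_2)$.

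For $n \geq 3$, first suppose $j_k \neq j_{k+1}$. Then $a_1, \dots, a_n$ is a centred alternating word of length at least two, so $\phi(a_1 \cdots a_n) = 0$ directly by the defining property of freeness. This is consistent with the conclusion: in the subcase $n = 2k$, the asserted right-hand side contains the factor $\phi(a_k a_{k+1}) = \phi(a_k)\phi(a_{k+1}) = 0$, and otherwise the conclusion simply asserts vanishing.

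Next suppose $j_k = j_{k+1}$. Put $b = a_k a_{k+1} - \phi(a_k a_{k+1}) \cdot 1 \in \cA_{j_k}$, which is centred, and decompose linearly:
$$
\phi(a_1 \cdots a_n) = \phi(a_1 \cdots a_{k-1}\, b\, a_{k+2} \cdots a_n) + \phi(a_k a_{k+1})\, \phi(a_1 \cdots a_{k-1} a_{k+2} \cdots a_n).
$$
The first summand is $\phi$ of a centred alternating word of length $n-1 \geq 2$ (the alternation at the new junction is ensured by $j_{k-1} \neq j_k = j_{k+1} \neq j_{k+2}$), hence vanishes by freeness. For the second summand, apply the induction hypothesis to the length-$(n-2)$ word $a_1 \cdots a_{k-1} a_{k+2} \cdots a_n$ with break at position $k-1$: it vanishes unless $n - 2 = 2(k-1)$, i.e.\ $n = 2k$, in which case it equals $\phi(a_1 a_n)\phi(a_2 a_{n-1}) \cdots \phi(a_{k-1} a_{k+2})$. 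Multiplying by the scalar $\phi(a_k a_{k+1})$ and using commutativity of $\bC$ then gives the claimed identity.

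The main bookkeeping obstacle is the corner cases $k \in \{1, n-1\}$, where the shifted break index $k-1$ falls outside the range required by the induction hypothesis. In each such corner the reduced word $a_1 \cdots a_{k-1} a_{k+2} \cdots a_n$ is either empty, a single centred variable, or a fully alternating centred word of length $\geq 2$, so a direct appeal to freeness (in place of the induction hypothesis) gives $\phi = 0$; and since $n = 2k$ cannot occur in these corners once $n \geq 3$, the conclusion reduces to $\phi(a_1 \cdots a_n) = 0$, matching the claim.
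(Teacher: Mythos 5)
Your proof is correct and follows essentially the same route as the paper: induction on $n$, recentering $a_k a_{k+1}$ as $\tilde a_k = a_ka_{k+1}-\phi(a_ka_{k+1})$ when $j_k=j_{k+1}$, killing the first term by freeness of the resulting centred alternating word, and applying the induction hypothesis to the length-$(n-2)$ word with break at $k-1$. Your explicit handling of the corner cases $k\in\{1,n-1\}$ (where the shifted break index leaves the admissible range) is a point the paper's proof passes over silently, and is a welcome addition.
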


\begin{proof}
Let $\tilde{a}_k = a_k a_{k+1} - \phi(a_k a_{k+1})$. If $j_k
\not = j_{k+1}$ then $\tilde{a}_k = a_k a_{k+1}$ and
$\phi(a_1 \cdots a_n) = 0$. If $j_k = j_{k+1}$ then
\[
\phi(a_1 \cdots a_n) = \phi(a_1 \cdots \tilde{a}_k a_{k+2} \cdots a_n)
+ \phi(a_k a_{k+1}) \phi(a_1 \cdots a_{k-1} a_{k+2} \cdots a_n)
\]
\[
=
\phi(a_k a_{k+1}) \phi(a_1 \cdots a_{k-1} a_{k+2} \cdots a_n).
\]
Then by induction on $n$ we must have $k-1 = (n-2)/2$ and if so, then 
\[
\phi(a_1 \cdots a_{k-1} a_{k+2} \cdots a_n)
=
\phi(a_1a_n) \cdots \phi(a_{k-1} a_{k+2}). 
\]
Hence we have equation (\ref{eq:reduction by folding}).
\end{proof}

\begin{lemma}\label{lemma:intermediate reduction}
Suppose we have a real non-commutative probability space
$(\cA, \tau)$ and symmetric subalgebras $\cA_1, \dots,
\cA_s$ which are free with respect to $\tau$. Suppose that
we have centred elements $p_1, \dots, p_n \in \cA$ with $p_i
\in \cA_{j_i}$ and $j_1 \not = j_2$, \dots, $j_{n-1} \not =
j_n$ and $j_n \not = j_1$. Let $V$ be the block of $\ker(j)$
containing $1$. Write $V = \{l_1, \dots, l_t\}$ with $1= l_1
< \cdots < l_n < \cdots < l_t < n$. Then
\[
\tau(p_1 \cdots p_{l_m - 1} (p_{l_m} \cdots p_n)^t) = 0
\]
unless $l_{m} - 1 = n/2$ in which case we have $($setting $k= n/2)$
\[
\tau(p_1 \cdots p_{l_m -1}(p_{l_m} \cdots p_n)^t)
=
\tau(p_1 p_{k+1}^t) \cdots \tau(p_k p_{n}^t).
\]
\end{lemma}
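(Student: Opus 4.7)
The plan is to rewrite the expression as the trace of a single word of length $n$ with a controlled color pattern, and then invoke Lemma \ref{lemma:standard reduction}. Using the involution, write
\[
(p_{l_m} \cdots p_n)^t = p_n^t\, p_{n-1}^t \cdots p_{l_m}^t,
\]
so that the quantity in question equals $\tau(a_1 a_2 \cdots a_n)$, where
\[
(a_1,\dots,a_n) = (p_1,\,p_2,\,\dots,\,p_{l_m-1},\,p_n^t,\,p_{n-1}^t,\,\dots,\,p_{l_m}^t).
\]
Because each subalgebra $\cA_{j_i}$ is symmetric, $p_i^t \in \cA_{j_i}$, and since $\tau$ is a state, $\tau(p_i^t) = \tau(p_i) = 0$. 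Hence every letter of this length-$n$ word is centred and lies in one of the free subalgebras $\cA_{j_i}$.

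Next, I read off the sequence of subalgebra indices of the letters $a_1,\dots,a_n$:
\[
j_1,\, j_2,\, \dots,\, j_{l_m - 1},\, j_n,\, j_{n-1},\, \dots,\, j_{l_m}.
\]
In the left-hand block, consecutive indices differ by the alternation hypothesis on $p_1,\dots,p_n$; the same is true in the right-hand (reversed) block. The only position where alternation of the new word could fail is the \emph{junction} between position $l_m-1$ (index $j_{l_m-1}$) and position $l_m$ (index $j_n$). Therefore the hypothesis of Lemma \ref{lemma:standard reduction} is satisfied with $k = l_m - 1$.

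Applying Lemma \ref{lemma:standard reduction} immediately yields $\tau(a_1\cdots a_n)=0$ unless $n$ is even and $l_m - 1 = n/2$. In that case, setting $k = n/2 = l_m - 1$, the lemma gives
\[
\tau(a_1 \cdots a_n) = \tau(a_1 a_n)\,\tau(a_2 a_{n-1})\cdots \tau(a_k a_{k+1}).
\]
Matching indices, $a_i = p_i$ for $1 \leq i \leq k$ and $a_{k+j} = p_{n-j+1}^t$ for $1 \leq j \leq k$, so $a_i a_{n-i+1} = p_i\, p_{k+i}^t$, and the product becomes
\[
\tau(p_1 p_{k+1}^t)\,\tau(p_2 p_{k+2}^t)\cdots \tau(p_k p_n^t),
\]
which is exactly the claimed formula.

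The main obstacle is purely bookkeeping: correctly tracking the index of each letter after reversing the transposed block and identifying that the only potential violation of alternation is at the single junction. Once that is in place, Lemma \ref{lemma:standard reduction} does all the work, and the symmetry of $\tau$ together with the symmetry of each $\cA_{j_i}$ guarantees the centredness needed to apply it.
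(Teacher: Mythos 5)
Your proof is correct and takes essentially the same route as the paper: both expand $(p_{l_m}\cdots p_n)^t$ as $p_n^t\cdots p_{l_m}^t$, observe that the resulting length-$n$ word of centred letters is alternating except possibly at the single junction between positions $l_m-1$ and $l_m$, and invoke Lemma \ref{lemma:standard reduction} with $k=l_m-1$. Your write-up is in fact slightly crisper, since you make the choice of $k$ explicit and apply the lemma once, whereas the paper phrases the argument in terms of which factor of the folded product would vanish.
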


\begin{proof}
By Lemma \ref{lemma:standard reduction}, we must have $n$
even, or else we get $0$. When $n$ is even, again by Lemma
\ref{lemma:standard reduction}, $\tau(p_1 \cdots p_{l_m
  -1}(p_{l_m} \cdots p_n)^t)$ factors. If $l_m - 1 > n/2$
then $\tau(p_{n/2} p_{n/2 + 1}) = 0$ is a factor and we get
$0$. If $l_m - 1 < n/2$ then $\tau(p_{l_m+1}^t p_{l_m}^t) =
0$ is a factor and we get $0$. When $l_m - 1 = n/2$, we get,
again by Lemma \ref{lemma:standard reduction}, exactly what
is claimed in the Lemma.
\end{proof}

\begin{lemma}\label{lemma:final reduction}
Let $P_1, \dots, P_n \in M_N(\cL^{\infty-})$, with $n \geq
2$, be such that $P_i \in \cA_{j_i}$ with $j_1 \not= j_2$,
$j_2 \not = j_3$, \dots, $j_{n-1} \not = j_n$, $j_n \not =
j_1$, and for each $i$, $\rE(\Tr(P_i)) = \tau'(p_i) + o(1)$.
Then
\begin{equation}
\rE(\Tr(P_1 \cdots P_n))
=
\begin{cases}
O(N^{-1}) & \mbox{\ for\ } n \mbox{\ odd} \\
\tau(p_1 p_{k+1}^t) \cdots \tau(p_k p_{n}^t) + O(N^{-1}) & \mbox{\ for\ } n = 2k
\end{cases}.
\end{equation}
\end{lemma}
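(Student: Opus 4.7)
The plan is to combine Lemma \ref{lemma:inside induction} with Lemma \ref{lemma:intermediate reduction}; the lemma is essentially a clean bookkeeping exercise tying those two results together. Since $j_1, \dots, j_n$ is cyclically alternating and each $P_i$ satisfies $\rE(\Tr(P_i)) = \tau'(p_i) + o(1)$, the hypotheses of Lemma \ref{lemma:inside induction} are met. Letting $V = \{l_1, \dots, l_t\}$ be the block of $\ker(j)$ containing $1$ (with $1 = l_1 < \cdots < l_t < n$), Lemma \ref{lemma:inside induction} gives
\[
\rE(\Tr(P_1 \cdots P_n)) = \sum_{m=2}^t \tau\bigl(p_1 \cdots p_{l_m - 1}\, p_n^t \cdots p_{l_m}^t\bigr) + O(N^{-1}),
\]
and I would rewrite $p_n^t \cdots p_{l_m}^t = (p_{l_m} \cdots p_n)^t$ to match the form in the next step.

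Next, note that the expansion $\rE(\Tr(P_i)) = \tau'(p_i) + o(1)$ forces $\tau(p_i) = 0$, so the $p_i$ are centered, and Lemma \ref{lemma:outside induction} together with the results of \cite{mp} ensures that the symmetric subalgebras $\cA_{j_1}, \dots, \cA_{j_n}$ are free with respect to the tracial limit state $\tau$. These are exactly the hypotheses of Lemma \ref{lemma:intermediate reduction}, which I would apply to each term of the sum: for each $m$, the summand $\tau\bigl(p_1 \cdots p_{l_m - 1}(p_{l_m} \cdots p_n)^t\bigr)$ vanishes unless $l_m - 1 = n/2$, in which case it equals $\tau(p_1 p_{k+1}^t) \cdots \tau(p_k p_n^t)$ with $k = n/2$.

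Finally I would split on the parity of $n$. If $n$ is odd, $n/2$ is not an integer, so no $l_m - 1$ can equal $n/2$ and every summand vanishes, giving $\rE(\Tr(P_1 \cdots P_n)) = O(N^{-1})$. If $n = 2k$ is even, at most one $m$ satisfies $l_m = k+1$, and such an $m$ exists if and only if $k+1 \in V$, equivalently $j_{k+1} = j_1$. When $j_{k+1} \neq j_1$, the sum is empty modulo $O(N^{-1})$; but then by freeness of $\cA_{j_1}$ and $\cA_{j_{k+1}}$ the factor $\tau(p_1 p_{k+1}^t)$ already vanishes, so the claimed formula $\tau(p_1 p_{k+1}^t) \cdots \tau(p_k p_n^t) + O(N^{-1})$ still holds trivially. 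When $j_{k+1} = j_1$, exactly one term survives and contributes precisely $\tau(p_1 p_{k+1}^t) \cdots \tau(p_k p_n^t)$. In both subcases the conclusion follows.

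There is no real obstacle here; the only thing to be careful about is the bookkeeping in the even case to check that when the block structure fails to line up with the ``folding'' index $k+1$, the right-hand side of the stated formula is also zero, so both sides of the asymptotic equality agree. That observation is what lets a single formula cover all possibilities.
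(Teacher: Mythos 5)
Your proof is correct and follows essentially the same route as the paper's: apply Lemma \ref{lemma:inside induction} to get the sum $\sum_{m=2}^t \tau(p_1 \cdots p_{l_m-1}(p_{l_m}\cdots p_n)^t) + O(N^{-1})$, then dispose of each summand via Lemma \ref{lemma:intermediate reduction} (the paper cites Lemma \ref{lemma:standard reduction} directly in the odd case, but that is the same mechanism). Your extra observation in the even case --- that when no $l_m$ equals $k+1$ the factor $\tau(p_1 p_{k+1}^t)$ vanishes by freeness, so the stated formula still holds --- is a small point the paper leaves implicit, and it is handled correctly.
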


\begin{proof}
Let $V \in \ker(j)$ be the block containing $1$ and write $V
= \{ l_1, \dots, l_t\}$ with $l_1 = 1$, $l_{m-1}+1 < l_m$,
and $l_t < n$. By Lemma \ref{lemma:inside induction} we have
\[
\rE(\Tr(P_1 \cdots P_n))
=
\mathop{\sum_{m=2}^{t}}
\tau(p_1 \cdots p_{l_m-1} p_t^t \cdots p_{l_m}^t )  + O(N^{-1}).
\] 

When $n$ is odd we have by Lemma \ref{lemma:standard
  reduction} that $\tau(p_1 \cdots p_{l_m-1} p_t^t \cdots
p_{l_m}^t ) = 0$. When $n = 2k$ is even, we have by Lemma
\ref{lemma:intermediate reduction} that
\begin{align*}
\rE(\Tr(P_1 \cdots P_n))
& =
\mathop{\sum_{m=2}^{t}}
\tau(p_1 \cdots p_{l_m-1} p_t^t \cdots p_{l_m}^t )  + O(N^{-1})  \\
& \mbox{} = 
\tau(p_1 p_{k+1}^t) \cdots \tau(p_k p_{n}^t) + O(N^{-1}).
\end{align*}
\end{proof}

\begin{theorem}\label{thm:asymptotic real infinitesimal freeness}
Let, for each $N$, $\cA_{1, \sN}, \dots, \cA_{s, \sN}
\subseteq M_N(\cL^{\infty-})$ be unital symmetric
subalgebras such that the entries of matrices from different
subalgebras form independent sets. Suppose that all, or all
but one, of the subalgebras are orthogonally invariant, and
suppose that each of the subalgebras satisfies
$(\ref{eq:assumption one})$, $(\ref{eq:assumption two})$ and
$(\ref{eq:assumption three})$. Then the subalgebras $\cA_{1,
  \sN}, \dots, \cA_{s, \sN}$ are asymptotically real
infinitesimally free.
\end{theorem}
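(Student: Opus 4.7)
The plan is to deduce the theorem directly from Lemma \ref{lemma:final reduction}, interpreted through the tracial reformulation of Proposition \ref{prop:non-tracial case}. First I would invoke Lemma \ref{lemma:outside induction} to guarantee that the algebra generated by $\cA_{1,\sN}, \dots, \cA_{s,\sN}$ satisfies $(\ref{eq:assumption one})$, $(\ref{eq:assumption two})$ and $(\ref{eq:assumption three})$, so that a limit state $\tau$ and infinitesimal functional $\tau'$ exist on the limit real infinitesimal probability space $(\cA, \tau, \tau', t)$. Both functionals are tracial, since $\tau$ is the limit of the tracial states $\rE\circ\tr$ and $\tau'$ is the $N^{-1}$-coefficient in an expansion of those same tracial functionals. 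Classical asymptotic freeness of orthogonally invariant ensembles (\cite[Thm.~54]{mp}) together with the symmetry of each $\cA_{i,\sN}$ gives that the limit subalgebras $\cA_1,\dots,\cA_s$ are free with respect to $\tau$ and are symmetric, putting Proposition \ref{prop:non-tracial case} in force.

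It then suffices to verify, for centred cyclically alternating tuples $a_1,\dots,a_n$ with $a_i \in \cA_{j_i}$, that $\tau'(a_1 \cdots a_n) = 0$ when $n$ is odd or $n = 2$, and $\tau'(a_1 \cdots a_n) = \tau(a_1 a_{k+1}^t) \cdots \tau(a_k a_n^t)$ when $n = 2k \geq 4$. To this end I would lift each $a_i$ to a matrix $P_i \in \cA_{j_i,\sN}$ whose joint distribution converges to that of $a_1,\dots,a_n$. Because $\tau(a_i) = 0$, assumption $(\ref{eq:assumption one})$ reads $\rE(\Tr(P_i)) = \tau'(a_i) + o(1)$, which is precisely the input Lemma \ref{lemma:final reduction} demands. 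Since the $\cA_{j_i}$ are free and the $a_i$ are centred and cyclically alternating, $\tau(a_1 \cdots a_n) = 0$, so
\[
\tau'(a_1 \cdots a_n) \;=\; \lim_{N\to\infty} \rE(\Tr(P_1 \cdots P_n)).
\]

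Lemma \ref{lemma:final reduction} directly yields the desired values: for $n$ odd the limit is $0$, and for $n = 2k$ the limit is $\tau(a_1 a_{k+1}^t) \cdots \tau(a_k a_n^t)$. When $n \geq 4$ is even this matches condition $(ii)$ of Proposition \ref{prop:non-tracial case} verbatim. When $n = 2$ the formula reduces to $\tau(a_1 a_2^t)$, which vanishes because $a_2^t \in \cA_{j_2}$ is centred (by symmetry of $\tau$), $a_1 \in \cA_{j_1}$ is centred, and $j_1 \neq j_2$, so freeness of $\cA_{j_1}$ and $\cA_{j_2}$ applies; this reconciles the general product formula with the $n=2$ half of condition $(i)$, completing the verification.

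The substantive labour is already contained in the integration-by-parts of Proposition \ref{prop:int_by_parts} and the case analysis of Lemmas \ref{lemma:inside induction}--\ref{lemma:final reduction}; the proof of the theorem itself is an assembly step. The only points to check carefully are the traciality of $\tau'$ (so the symmetric form of Proposition \ref{prop:non-tracial case} applies), the fact that centred limit elements lift to matrices satisfying the normalisation $\rE(\Tr(P_i)) = \tau'(a_i) + o(1)$, and the automatic collapse of the $n=2$ case, which is the only apparent mismatch between Lemma \ref{lemma:final reduction} and the tracial form of real infinitesimal freeness.
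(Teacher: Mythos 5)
Your proposal is correct and follows essentially the same route as the paper: Lemma \ref{lemma:outside induction} for the existence of the limit infinitesimal law, \cite[Thm.~54]{mp} for freeness of the limit subalgebras, Lemma \ref{lemma:final reduction} to verify conditions $(i)$ and $(ii)$ of Proposition \ref{prop:non-tracial case}, and then that proposition to conclude. Your explicit checks of traciality of $\tau'$ and of the vanishing of $\tau(a_1 a_2^t)$ in the $n=2$ case are correct and merely make precise steps the paper leaves implicit.
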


\begin{proof}
We know that by \cite[Thm. 54]{mp} there is a real second
order probability space $(\cA, \tau, \tau_2)$ and unital
symmetric subalgebras such that the limit distribution of
each $\{ \cA_{i, \sN} \}_\sN$ is that of $\cA_i$ and that
the subalgebras $\cA_1, \dots, \cA_s$ are real second order
free. By Lemma \ref{lemma:outside induction}, we also know
that the joint infinitesimal law of $\{\cA_{1, \sN}, \dots,
\cA_{s, \sN}\}$ has a limit infinitesimal law. By Lemma
\ref{lemma:final reduction} we know that the joint
infinitesimal distribution satisfies the conditions $(i)$
and $(ii)$ of Proposition \ref{prop:non-tracial
  case}. Finally by Proposition \ref{prop:non-tracial case}
the joint distribution is the joint distribution of real
infinitesimally free subalgebras (as defined in Definition
\ref{def:real infinitesimal freeness}).
\end{proof}

\section{Real Infinitesimal Free Cumulants \\ and the Moment-cumulant formula}\label{section:real infinitesimal cumulants}

When we pass from complex infinitesimal freeness to real
infinitesimal freeness we need to use the symmetric
non-crossing annular permutations introduced in \cite{mvb}.
\begin{figure}
\includegraphics[width=10em]{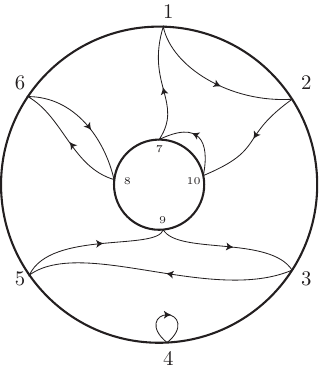}
\caption{\small\label{fig:non-crossing_annular} A non crossing permutation of a $(6, 4)$-annulus.}
\end{figure}

Let us recall that $S_{NC}(p, q)$ denotes the set of
non-crossing permutations of a $(p, q)$-annulus. These are
permutations of $[p+q]$ such that the cycles can be drawn in
an annulus, with $p$ points on the outer circle and $q$
points on the inner circle, in such a way that the cycles do
not cross, see Figure \ref{fig:non-crossing_annular}. See
\cite[\S5.1]{ms} for a full definition and examples. The
simplest characterization of these permutations is through
Euler's formula for the genus of a triangulated surface, but
now transferred into the symmetric group: $\pi \in S_\NC(p,
q)$ if and only if
\[
\pi \vee \gamma_{p,q} = 1_{p+q} \qquad \mbox{and}\qquad
\#(\pi) + \#(\pi^{-1} \gamma_{p,q})  = p + q,
\]
where $\pi \vee \gamma_{p,q} = 1_{p+q}$ means that at least
one cycle of $\pi$ meets both cycles of $\gamma_{p, q}$, and
$\gamma_{p,q} \in S_{p+q}$ is the permutation with two
cycles $(1, 2, 3, \dots, p)(p+1, \dots, p+q)$, and $\#(\pi)$
is the number of cycles in the cycle decomposition of $\pi$,
counting cycles of length $1$.

\subsection{Symmetric annular non-crossing permutations}\label{subsec:symmetric annular non-crossing permutations}

In \cite[\S2]{mvb} a subset of non-crossing annular
permutations was identified. These are the
\textit{symmetric} non-crossing annular permutations.  We
denote this subset by $S_\NC^\delta(n, -n)$; the definition
is recalled in the next paragraph.

Let $n \geq 2$ be an integer. By $S_{\pm n}$ we mean the
permutations of $[\pm n] = \{ \pm 1, \dots,\ab \pm n\}$. We
let $\delta \in S_{\pm n}$ be the permutation with $n$
cycles each of size $2$ given by $\delta(k) = - k$. Next we
let $\gamma_n \in S_n$ be the permutation with the long
cycle $(1, 2, 3, \dots, n)$. Throughout the paper we shall
observe the following convention. If $\pi \in S_n$ then we
consider $\pi$ to also be the permutation of $[\pm n]$ which
acts trivially on $\{ -1, \dots, -n\}$. With this convention
we have that given a $\pi \in S_n$, $\delta \pi \delta$ is a
permutation on $[\pm n]$ which acts trivially on $[n] = \{1,
\dots, n\}$.  Thus with $\gamma_n = (1, \dots, n)$ we have
that
\[
\gamma_n \delta \gamma_n^{-1} \delta = (1, \dots, n)(-n, \dots, -1). 
\]
We shall say that a permutation, $\pi$, is a
\textit{pairing} if all cycles have length $2$. This is
equivalent to saying that $\pi^2$ is the identity and $\pi$
has no fixed points.
\begin{figure}
\begin{center}
\includegraphics{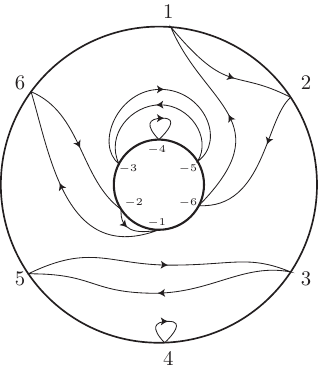}
\end{center}
\caption{\label{fig:symmetric annular}\small A \textit{symmetric} non-crossing annular permutation on a $(6, -6)$-annulus. Note that the orientation of the points on the two circles is the same. This is the opposite convention used in Figure \ref{fig:non-crossing_annular}. }
\end{figure}

\begin{notation}
Let $S_\NC^\delta(n, -n)$ be the permutations $\sigma \in S_{\pm n}$ such that
\begin{itemize}
\item
$\sigma \vee \gamma_n \delta \gamma_n^{-1} \delta = 1_{\pm n} $, and

\smallskip

\item
$\#(\sigma) + \#(\sigma^{-1} \gamma_n \delta \gamma_n^{-1} \delta) = 2 n$, and

\smallskip

\item
$\sigma \delta$ is a pairing.
\end{itemize} 
The first two assumptions mean that $\sigma$ is non-crossing
annular on a $(n, -n)$-annulus, the third is a symmetry
condition explained below. See Figure \ref{fig:symmetric
  annular}.
\end{notation}

\begin{remark}\label{remark:conjugate pairs}
It was noted in \cite[Remark 17]{mvb} that if we set $p =
\sigma \delta$ with $\sigma \in S_\NC^\delta (n, -n)$ then
$\sigma = p \delta$ and thus $\sigma$ is the product of two
pairings and hence the cycles of $\sigma$ appear in
conjugate pairs: $c$ and $c'$ with $c' = \delta c^{-1}
\delta$ (see \cite[Lemma 2]{mp}). Thus the cycle
decomposition of $\sigma$ can always be written $c_1 c_1'
\cdots c_k c_k'$ with $c_i' = \delta c_i^{-1} \delta$. We
call the pair $\{ c_i, c_i'\}$ a \textit{conjugate
  pair}. The blocks of $\sigma\delta \vee \delta$ are
exactly $c_i \cup c_i'$ (again, see \cite[Lemma 2]{mp}).
\end{remark}

\begin{notation}\label{notation:sncdelta}
Let $(\cA, \tau, \tau')$ be a tracial real non-commutative
probability space and $\sigma \in S_\NC^\delta(n, -n)$. We
define $ \kappa_{\sigma/2}(a_1, \dots, a_n) $ as
follows. For each pair of conjugate cycles $\{ c, c'\}$ of
$\sigma$, we write $c = (i_1, \dots, i_k, -j_l, \dots,
-j_1)$ with $i_1, i_2, \dots, i_k$ and $j_1, j_2, \dots, <
j_l$ in cyclic order\footnote{We say that $i_1 , i_2 \dots,
i_k$ are in \textit{cyclic order} if they are in the same
order as in the orbit of $i_1$ under $\gamma_n$.}, we have
the contribution of the pair $\{c, c'\}$ is
\[
\kappa_{k+l}(a_{i_1}, \dots, a_{i_k}, a_{-j_l}^t, \dots, a_{-j_1}^t).
\]
By taking the product over all conjugate pairs $\{c, c'\}$ we get $\kappa_{\sigma/2}$: 
\[
\kappa_{\sigma/2}(a_1, \dots, a_n)
= \kern-2em
\mathop{\prod_{\{c, c'\} \in \sigma}}_{c = (i_1, \dots, i_k, -j_{l}, \dots, -j_1)}
\kern-2em
\kappa_{k+l}(a_{i_1}, \dots, a_{i_k}, a_{-j_l}^t, \dots, a_{-j_1}^t).
\]
The $\sigma/2$ in the notation is meant to signal that we
only take one member of each conjugate pair. Since $\tau$ is
tracial and invariant under the transpose the contributions
of $c$ and $c'$ are the same.
\end{notation}

Recall from \cite[Def. 9.21]{ns} the Kreweras complement,
$K(\pi)$, of a non-crossing partition $\pi$. In \cite{ns},
$K(\pi)$ is defined is defined using the lattice property of
$\NC(n)$. Since $S_\NC^\delta(n, -n)$ is not a lattice,
another construction is needed; and here we will use an
alternative construction found in \cite{ns}. For $\pi \in
\NC(n)$, $K(\pi) = \pi^{-1}\gamma_n$, where $\gamma_n =(1,
\dots, n)$ is the permutation of $[n]$ mentioned in
\S\ref{subsec:symmetric annular non-crossing permutations}
and $\pi$ is the permutation obtained by putting the
elements of each block in increasing order, see
\cite[Ex. 18.25]{ns}.

\begin{definition}\label{def:symmetric kreweras complement}
For $\sigma \in S_\NC^\delta(n, -n)$, let $K^\delta(\sigma)
= \delta\gamma^{-1}_n\delta \sigma^{-1} \gamma_n$. Note that
$K^\delta(\sigma) \in S_\NC^\delta(n, -n)$. We call
$K^\delta(\sigma)$ the \textit{symmetric Kreweras
  complement} of $\sigma$, or for brevity, the
\textit{Kreweras complement} of $\sigma$. In \cite[Notation
  23]{mvb}, this complement was written as $K(\sigma)$. The
same construction was also used in \cite{r}.
\end{definition}

\begin{definition}\label{definition:real infinitesimal cumulants}
Let $(\cA, \tau, \tau')$ be a tracial real infinitesimal
probability space. For $a_1, \dots, a_n \in \cA$ we set for
$n = 1$
\[
\kappa_1'(a_1) = \tau'(a_1)\ \mbox{and}
\]
and for $n \geq 2$
\begin{equation}\label{eq:real infinitesimal moment cumulant}
\tau'(a_1 \cdots a_n) =
\sum_{\pi \in \NC(n)} \partial \kappa_\pi(a_1, \dots, a_n)
+
\sum_{\sigma \in S_\NC^\delta (n, -n)} \kappa_{\sigma/2}(a_1, \dots, a_n).
\end{equation}
Some explicit examples are presented in \S \ref{section:small example}. 
\end{definition}

\subsection{Spatial Derivatives}
As in the usual moment-cumulant formula
(\ref{eq:moment_cumulant}), the equation above inductively
defines the infinitesimal cumulants. For example
\[
\kappa_2'(a_1, a_2) = \tau'(a_1 a_2) -[ \tau'(a_1) \tau(a_2)  + \tau(a_1) \tau'(a_2)  + \tau(a_1a_2^t) - \tau(a_1) \tau(a_2^t)],
\]
\[
= \sum_{\pi \in \NC(2)} \mu(\pi, 1_2) \partial \tau_\pi(a_1, a_2) - \tau(a_1a_2^t) + \tau(a_1) \tau(a_2^t)
\]
where $\mu$ is the M\"obius function of $\NC(n)$. In order
to make this fit into a convenient moment-cumulant relation
we introduce the \textit{spatial} derivative,
$\dot\kappa_n$. In this example, this will amount to
rewriting the equation above as
\[
\kappa_2'(a_1, a_2)  + \dot{\kappa}_2(a_1, a_2) =
\sum_{\pi \in \NC(2)} \mu(\pi, 1_2) \partial \tau_\pi(a_1, a_2),
\]
where $\dot{\kappa}_2(a_1, a_2) = \tau(a_1a_2^t) - \tau(a_1)
\tau(a_2^t) = \kappa_2(a_1, a_2^t)$. This example is meant
to illustrate the name spatial derivative, in that we do not
consider the infinitesimal distributions of $a_1$ and $a_2$,
but the first order joint distribution of $a_1$ and $a_2^t$,
just as was done in \cite[Def.~2.10]{cmss}.

When $n = 3$ we can start with equation (\ref{eq:real
  infinitesimal moment cumulant}) and use the equation above
to write $\kappa_3'$ in terms of $\tau$ and $\tau'$. The
right hand side of (\ref{eq:real infinitesimal moment
  cumulant}) will have nine terms containing a $\kappa_i'$
(for some $i$) and six terms not containing a $\kappa_i'$,
this will be the spatial part. The terms containing a
$\kappa_i'$ can be grouped (after some calculation) into
$\sum_{\pi \in \NC(3)} \mu(\pi, 1_3) \partial\tau_\pi(a_1,
a_2, a_3)$. The six terms not containing a $\kappa_i'$:
\begin{align*} 
\kappa_3(a_1, a_2, a_3^t) &+ \kappa_3(a_2, a_3, a_1^t) + \kappa_3(a_3, a_1, a_2^t)
+ \kappa_1(a_1) \kappa_2(a_2, a_3^t)  \\
& + 
\kappa_2(a_1, a_3^t) \kappa_1(a_2) + \kappa_2(a_1, a_2^t) \kappa_1(a_3),
\end{align*}
can be expanded into 21 terms with a $\tau$ but no
$\tau'$. So instead, we write these remaining six as
\[
\dot{\kappa}_3(a_1, a_2, a_3) + \kappa_1(a_1) \dot\kappa_2(a_2, a_3) + \kappa_1(a_2) \dot\kappa_2(a_3, a_1) + \kappa_1(a_3) \dot\kappa_2(a_1, a_2),
\]
where $\dot\kappa_2$ is as above and
\[
\dot\kappa_3(a_1, a_2, a_3) =
\kappa_3(a_1, a_2, a_3^t) + \kappa_3(a_2, a_3, a_1^t) + \kappa_3(a_3, a_1, a_2^t).
\]
The three terms in $\dot\kappa_3$ are the sum of
$\kappa_{\sigma/2}$ as $\sigma$ runs over the three
permutations in Figure \ref{fig:all through blocks}.

Thus
\[
\kappa_3'(a_1, a_2, a_3) + \dot{\kappa}_3(a_1, a_2, a_3) = \sum_{\pi \in \NC(3)} \mu(\pi, 1_3)
\partial\tau_\pi(a_1, a_2, a_3), \quad\mbox{and}
\]
\[
\tau'(a_1a_2a_3)
=
\sum_{\pi \in \NC(3)} \{\partial\kappa_\pi(a_1, a_2, a_3) + \delta\kappa_\pi(a_1, a_2, a_3)\}, \quad\mbox{where}
\]
$ \delta\kappa_\pi = \sum_{V \in \pi} \dot\kappa_{{\scriptscriptstyle |V|}} \prod_{W \not= V} \kappa_{{\scriptscriptstyle |W|}}$.

The need to give a general definition for $\dot{\kappa}_n$
motivates the presentation of the set $S_\NC^{\delta, a}(n,
-n)$ given in Definition \ref{def:all through cycles} below.

\begin{definition}\label{def:all through cycles}
We let $S_\NC^{\delta, a}(n, -n) \subseteq S_\NC^\delta(n,
-n)$ be those annular permutations for which every cycle
meets both cycles of $\gamma_n \delta \gamma_n^{-1}
\delta$. The superscript `$a$' means that \textit{all}
cycles meet both cycles of $\gamma_n \delta \gamma_n^{-1}
\delta$. These permutations, arising in earlier work on
second order freeness \cite[Prop. 6.1]{cmss}, will play a
prominent role in constructing the real infinitesimal
cumulants below.
\end{definition}

\begin{notation}\label{notation:spatial derivative}
For $n \geq 2$, let $\dot{\kappa}_n(a_1, \dots, a_n) =
\kern-1em\ds\sum_{\sigma \in S_\NC^{\delta, a}(n, -n) }
\kern-1em \kappa_{\sigma/2}(a_1, \dots, a_n)$. If $\pi \in
\NC(n)$ we let
\[
\delta\kappa_\pi(a_1, \dots, a_n) =
\sum_{V \in \pi} \dot{\kappa}_{{\scriptscriptstyle |V|}}(a_1, \dots, a_n | V) \prod_{W \not= V}
\kappa_{{\scriptscriptstyle |W|}}(a_1, \dots, a_n| W), 
\]
where the product is over all blocks $W$ not equal to
$V$. Then we set $\nabla = \partial + \delta$.
\end{notation}

\begin{figure}[t]
\begin{center}
\includegraphics[width=10em]{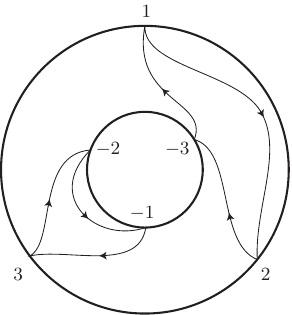}\hfill\includegraphics[width=10em]{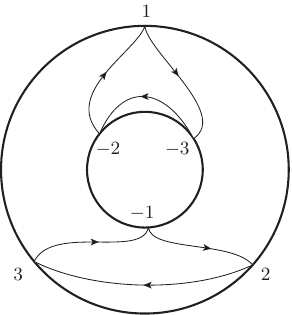}\hfill\includegraphics[width=10em]{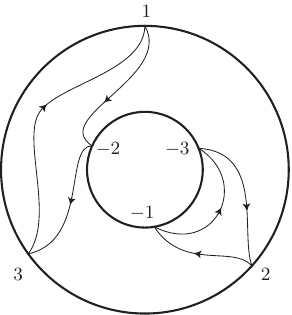}
\end{center}
\caption{\label{fig:all through blocks}\small When $n = 3$ there are three elements in $S_\NC^{\delta, a}(3, -3)$, they are displayed above.}
\end{figure}

\begin{notation}
Recall that $S_\NC^{\delta, a}(n, -n)$ is the subset of $S_\NC^{\delta}(n, -n)$ where all cycles are through cycles. Given $\pi \in \NC(n)$ and $V \in \pi$ we let $S_\NC^{\delta}(n, -n)_{\pi, V} $
\[
=
\left\{ \sigma \in S_\NC^{\delta}(n, -n) \ \vrule height 3em depth 2.5 em width 0.4pt\quad 
\vcenter{\hsize=15.1em\noindent$\mbox{every cycle of\ } \sigma \mbox{\ is either a cycle of}\\ 
\pi\delta\pi^{-1}\delta \mbox{\ or contained in\ } V \cup \delta(V)$, \\
moreover any cycle of $\sigma$ contained in $V \cup \delta(V)$ must be a through cycle}\quad
\right\}.
\]
\end{notation}

\begin{lemma}\label{lemma:annular reduction}
\[
S_\NC^\delta (n, -n)
=
\bigcup_{\pi \in \NC(n)}
\bigcup_{V \in \pi}
S_\NC^{\delta}(n, -n)_{\pi, V},
\]
and the union is disjoint.
\end{lemma}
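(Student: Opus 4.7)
The plan is to produce, for each $\sigma \in S_\NC^\delta(n,-n)$, a unique pair $(\pi, V)$ with $\pi \in \NC(n)$ and $V \in \pi$ such that $\sigma \in S_\NC^\delta(n,-n)_{\pi, V}$. Given such a $\sigma$, I would classify its cycles as \emph{through cycles} (meeting both $[n]$ and $[-n]$) or \emph{non-through cycles} (contained in one of $[n]$, $[-n]$). By Remark~\ref{remark:conjugate pairs} the cycles of $\sigma$ come in conjugate pairs under $c \mapsto \delta c^{-1} \delta$, which preserves the through/non-through distinction and pairs non-through cycles in $[n]$ with non-through cycles in $[-n]$. Set
\[
V := \{\, i \in [n] : i \text{ lies in a through cycle of } \sigma\,\};
\]
the annular connectivity $\sigma \vee \gamma_n \delta \gamma_n^{-1} \delta = 1_{\pm n}$ forces $V \neq \emptyset$, and the through cycles of $\sigma$ are exactly those contained in $V \cup \delta(V)$. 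Let $\pi$ be the partition of $[n]$ whose blocks are $V$ together with the cycles of $\sigma$ restricted to $[n] \setminus V$.

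I would then verify that $\pi \in \NC(n)$; this is the main geometric step. Drawn non-crossing on the $(n,-n)$-annulus, the through cycles of $\sigma$ separate the outer circle into the cyclic arcs bounded by consecutive points of $V$. The non-through cycles of $\sigma$ in $[n]$ are confined to these arcs and, restricted to any one arc, form a non-crossing permutation in the disk sense. Taking $V$ as the single ``outer'' block that cyclically encloses these sub-partitions produces a non-crossing partition of $[n]$. By construction every non-through cycle of $\sigma$ is a block of $\pi$ (or its $\delta$-conjugate in $[-n]$), hence a cycle of $\pi\delta\pi^{-1}\delta$, while every through cycle lies in $V\cup\delta(V)$, so $\sigma \in S_\NC^\delta(n,-n)_{\pi, V}$.

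For uniqueness, suppose also $\sigma \in S_\NC^\delta(n,-n)_{\pi', V'}$. If $i \in [n]$ is in a through cycle of $\sigma$, that cycle cannot be a cycle of $\pi'\delta(\pi')^{-1}\delta$ (whose cycles lie entirely in $[n]$ or in $[-n]$), so the defining condition places it in $V' \cup \delta(V')$, giving $i \in V'$. Conversely, if $i \in V'$ then the cycle of $\sigma$ through $i$ cannot be a cycle of $\pi'\delta(\pi')^{-1}\delta$ either: such a cycle would have to be the block $V'$ itself, which lies in $[n]$ and is non-through, violating the ``moreover'' requirement that cycles of $\sigma$ inside $V'\cup\delta(V')$ be through. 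Hence $V' = V$, and the non-through cycles of $\sigma$ then determine the remaining blocks of $\pi'$, forcing $\pi' = \pi$. Disjointness of the union is immediate.

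The main obstacle is the non-crossingness verification. A robust alternative to the annular-drawing argument is to use the Euler-formula constraint $\#(\sigma) + \#(\sigma^{-1}\gamma_n\delta\gamma_n^{-1}\delta) = 2n$ built into the definition of $S_\NC^\delta(n,-n)$: any crossing of $V$ with another candidate block of $\pi$ would produce an annular configuration inconsistent with the genus-zero count, contradicting the assumption on $\sigma$, so the partition $\pi$ read off from $\sigma$ must be non-crossing.
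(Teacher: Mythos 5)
Your proposal is correct and follows essentially the same route as the paper: read off $V$ as the support in $[n]$ of the through cycles of $\sigma$, take the remaining blocks of $\pi$ to be the non-through cycles in $[n]$, and observe that both data are determined by $\sigma$, giving disjointness. The only difference is cosmetic: for the non-crossingness of $\pi$ the paper simply cites \cite[Def.~8 and Thm.~13]{kms} (and Figure 6 of \cite{mvb}), whereas you sketch the geometric/Euler-characteristic argument behind that citation.
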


\begin{proof}
For each $\pi$ and $V$ we have $S_\NC^{\delta}(n, -n)_{\pi, V} \subseteq S_\NC^{\delta}(n, -n)$. If 
\[
\sigma \in 
S_\NC^{\delta}(n, -n)_{\pi_1, V_1}
\bigcap
S_\NC^{\delta}(n, -n)_{\pi_2, V_2}
\]
then $V_1 \cup \delta(V_1)$ and $V_2 \cup \delta(V_2)$ are
both the union of the through cycles of $\sigma$; so $V_1 =
V_2$. All the non-through cycles of $\sigma$ are cycles of
$\pi_1 \delta\pi_1^{-1}\delta$ and of $\pi_2
\delta\pi_2^{-1}\delta$. So we also have $\pi_1 =
\pi_2$. This proves disjointness.

Given $\sigma \in S_\NC^\delta(n, -n)$ we let $V \subset
[n]$ be such that $V \cup \delta(V)$ is the union of through
cycles of $\sigma$. Let the remaining blocks of $\pi$ be the
cycles of $\sigma$ contained in $[n]$. Then $\pi$ is
non-crossing, see \cite[Def. 8 and Thm. 13]{kms} and the
proof of Proposition 19 and Figure 6 in \cite{mvb}.
\end{proof}

With this notation we can now re-write the relation
(\ref{eq:real infinitesimal moment cumulant}) between
moments and cumulants.
\begin{figure}[t]
\begin{center}
\includegraphics[width=10em]{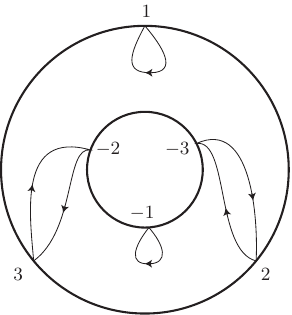}\hfill\includegraphics[width=10em]{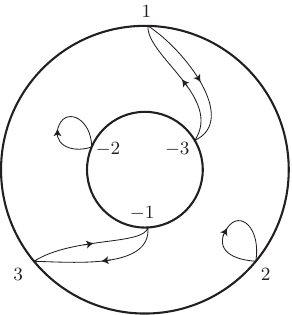}\hfill\includegraphics[width=10em]{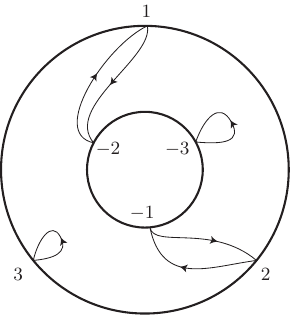}
\end{center}
\caption{\label{fig:remaining three three}\small When $n =
  3$ there are six elements in $S_\NC^{\delta}(3, -3)$; the
  first three are displayed in Figure \ref{fig:all through
    blocks}, the remaining three are displayed above.}
\end{figure}

\subsection{The Moment-Cumulant Formula}

From Definition \ref{eq:real infinitesimal moment cumulant} we have
\[
\tau'(a_1 \cdots a_n) =
\sum_{\pi \in \NC(n)} \partial \kappa_\pi(a_1, \dots, a_n)
+
\sum_{\sigma \in S_\NC^\delta (n, -n)} \kappa_{\sigma/2}(a_1, \dots, a_n).
\]
Using Notation \ref{notation:spatial derivative} and Lemma
\ref{lemma:annular reduction} we may rewrite the second
term:
\[
\sum_{\sigma \in S_\NC^\delta (n, -n)} \kappa_{\sigma/2}(a_1, \dots, a_n)
=
\sum_{\pi \in \NC(n)} \delta \kappa_\pi(a_1, \dots, a_n).
\]

This gives us a way of writing an infinitesimal moment as a
sum of cumulants:
\begin{align}\label{eq:infinitesimal moment cumulant}
\tau'(a_1 \cdots a_n)
=
\sum_{\pi \in \NC(n)} \nabla \kappa_\pi(a_1, \dots, a_n) . 
\end{align}

\begin{theorem}\label{thm:moment cumulant}

\begin{align}\label{eq:infinitesimal cumulant moment}
\nabla \kappa_n(a_1, \dots, a_n) & =  \kappa_n'(a_1, \dots, a_n) + \dot \kappa_n(a_1, \dots, a_n) \\
& =
\sum_{\pi \in \NC(n)} \mu(\pi, 1_n) \partial \tau_\pi (a_1, \dots, a_n). \notag
\end{align}
\end{theorem}

\begin{proof}
Because we were able to write an infinitesimal moment as a
sum over $\NC(n)$ in equation (\ref{eq:infinitesimal moment
  cumulant}), we are entitled to use Möbius inversion (see
\cite[Lect. 11]{ns}) to write the cumulant $\nabla
\kappa_n(a _1, \dots, a_n)$ as a sum of moments, again
indexed by $\NC(n)$.
\end{proof}

\begin{remark}
As observed above, $\dot\kappa_n(a_1, \dots, a_n)$ depends
only on the joint distribution of $\{a_1, a_1^t, \dots, a_n,
a_n^t\}$. This leads to the question as to how do we write
$\{\dot\kappa_n\}_n$ in terms of the joint distribution of
$\{a_1, a_1^t, \dots, a_n, a_n^t\}$? There is an answer to
this question this when $a_1 = \cdots =a_n$ and $a_i =
a_i^t$ for $1 \leq i \leq n$.

Using Equation (\ref{eq:infinitesimal moment cumulant}), we
solve for $\dot\kappa_n$ in terms of moments.  \begingroup
\renewcommand{\arraystretch}{1.5}
\begin{center}
\begin{tabular}{c|l}
$n$ & $\dot\kappa_n$ \\ \hline
$2$ & $m_2 - m_1^2$ \\
$3$ & $3 m_3 - 9 m_1 m_2 + 6m_1^3$\\
$4$ & $6m_4 - 24 m_1 m_3 - 11m_2^2 + 58 m_1^2 m_2 - 29 m_1^4$ \\
$5$ & $10 m_5 - 50 m_1 m_4 - 45 m_2 m_3 + 145 m_1^2 m_3 + 1 35 m_1 m_2^2$ \\
    & $\qquad\mbox{} - 325 m_1^3 m_2 + 130 m_1^5$ \\
$6$ & $15 m_6 - 90 m_1 m_5 - 81 m_2 m_4 + 306 m_1^2 m_4 - 39 m_3^2 + 558 m_1 m_2 m_3$ \\
    & $\qquad\mbox{} - 780 m_1^3 m_3 + 88 m_2^3 - 1101 m_1^2 m_2^2 + 1686 m_1^4 m_2 - 562 m_1^6$ \\    
\end{tabular}
\end{center}
There does not seem to be a discernible pattern, however if
we turn the moments into cumulants then the pattern of
Notation \ref{notation:spatial derivative} emerges.
\begin{center}
\begin{tabular}{c|l}
$n$ &  $\dot\kappa_n$ \\ \hline
$2$ & $\kappa_2$ \\
$3$ & $3 \kappa_3$ \\
$4$ & $6\kappa_4 + \kappa_2^2$  \\
$5$ & $10 \kappa_5 + 5 \kappa_2 \kappa_3$ \\
$6$ & $15 \kappa_6 + 9 \kappa_2 \kappa_4 + 6 \kappa_3^2 + \kappa_2^3$\\
$7$ & $21 \kappa_7 + 14 \kappa_2 \kappa_5 + 21 \kappa_3 \kappa_4 
+ 7 \kappa_2^2 \kappa_3$\\
$8$ & $28 \kappa_8 + 20 \kappa_2 \kappa_6 + 32 \kappa_3 \kappa_5 + 18 \kappa^2_4 + 12 \kappa_2^2 \kappa_4 + \kappa_2^4$\\  
\end{tabular}
\end{center}
\endgroup There is a simple formula connecting the cumulant
generating function $C(z) = 1 + \sum_{n=1}^\infty \kappa_n
z^n$ and the one for the spatial derivatives $\dot C(z) =
\sum_{n=2}^\infty \dot \kappa_n z^n$:
\[
\dot C(x) =
\bigg(x  \frac{\partial}{\partial x} \log
\bigg[ \frac{x C(y) - y C(x)}{x - y} \bigg] \bigg) \Bigg\vert_{y \mapsto x}
 = \frac{1}{2} \frac{x^2 C''(x)}{C(x) - x C'(x)}.
\]
This formula will be used to define the real infinitesimal
$R$-transform. The proof will be presented elsewhere, since
we don't need it for the results in this paper. Note the
similarity to \cite[Eq.~(51)]{cmss}.
\end{remark}

\subsection{Genus calculations}\label{subsec:genus calculations}

In the proof of Lemma \ref{lemma:outside induction} we
deferred a calculation for the genus expansion used in
Equation (\ref{eq:joint genus expansion}). We will complete
it now using the notation presented in \S
\ref{subsec:symmetric annular non-crossing permutations}.

In the next two lemmas we suppose that we have pairings $p,
q \in \cP_2(r)$ with $r$ even. We let $\epsilon =
(\epsilon_1, \dots, \epsilon_r) = (1, -1, 1, -1, \dots, 1,
-1) \in \bZ_2^r$. As in \cite[\S3 p. 995]{m} we also regard
$\epsilon$ as a permutation in $S_{\pm r}$ by setting
$\epsilon( k) = k$ when $\epsilon_{|k|} = 1$ and $\epsilon(
k) = - k$ when $\epsilon_{|k|} = -1$. The permutation
$\epsilon p \delta q \epsilon \in S_{\pm n}$ will then
determine the order of the contribution to Equation
(\ref{eq:joint genus expansion}) of the term for the pair
$(p, q)$. We shall see that for each pair $(p, q)$ there
will be an integer $g_{p,q} \geq 0$ such that either $-n +
\#(p \vee q) + \#(\pi_\peq) = 1 -2 g_{p,q}$ or $-n + \#(p
\vee q) + \#(\pi_\peq) = - g_{p,q}$.

\begin{proposition}\label{prop:disconnected genus}
Suppose $\epsilon p \delta q \epsilon$ leaves $[n]$
invariant, then there is an integer $g_{p,q} \geq 0$ such
that $-n + \#(p \vee q) + \#(\pi_\peq) = 1 -2 g_{p,q}$.
\end{proposition}

\begin{proof}
Let $\rho_0 = \epsilon p \delta q \epsilon$. 
Since $K^\delta(\rho_0) \delta = \delta \gamma^{-1} \delta
\epsilon q \delta p \epsilon \gamma \delta = \delta
\gamma^{-1} \epsilon \delta p \delta q \delta \epsilon
\gamma \delta$ is a pairing, the cycles of
$K^\delta(\rho_0)$ occur in conjugate pairs $c, c'$ with $c'
= \delta c^{-1} \delta$. We choose one representative of
each pair and after taking the absolute value of each cycle
entry, we get a permutation $\pi_\peq \in S_n$.  This
permutation is \textit{pairity preserving} in the
terminology of \cite[Lemma 13]{mp}. We then have
$\#(\pi_\peq) = \frac{1}{2} \#(K^\delta(\rho_0))$. In this
Proposition we are in the case where one cycle of each
conjugate pair only permutes elements of $[n]$. So to create
$\pi_\peq$ we just choose these cycles.  By \cite[Lemma
  4]{mp} we have $\#(p \vee q) = \frac{1}{2} \#(p\delta q) =
\frac{1}{2} \#(\rho_o) = \#(\rho)$ where $\rho$ is the
restriction of $\rho_0$ to $[n]$.  Now we have
\[
\#(\rho) + \#(\rho^{-1}\gamma) + \#(\gamma) = n + 2 - 2 g_{p,q}
\]
for some integer $g_{p, q} \geq 0$. Also $ \frac{1}{2} \#(K^\delta(\rho_0)) = \#(\rho^{-1}\gamma)$. Thus, as claimed, 
\[
-n +  \#(p \vee q) + \#(\pi_\peq) = 1 -2 g_{p,q}.
\]
\end{proof}

\begin{proposition}\label{prop:connected genus}
Suppose $\epsilon p \delta q \epsilon$ does not leave $[n]$
invariant, then there is an integer $g_{p,q} \geq 0$ such
that $-n + \#(p \vee q) + \#(\pi_\peq) = - g_{p,q}$.
\end{proposition}

\begin{proof}
As above we let $\rho_= \epsilon p \delta q \epsilon$. In
this case, the subgroup generated by $rho_0$ and $\delta
\gamma^{-1} \delta \gamma$ acts transitively on $[\pm n]$,
so we have
\[
\#(\rho_0) + \#( K^\delta(\rho_0)) + \#(\delta \gamma^{-1} \delta \gamma)
=
2n + 2 - 2 g_{p,q},
\]
for some integer $g_{p, q} \geq 0$. As in the proof of
Prop. \ref{prop:disconnected genus} we have $\#(p \vee q) =
\frac{1}{2} \#(\rho_o)$, $\#(\pi_\peq) = \frac{1}{2}
\#(K^\delta(\rho_0))$. Thus
\[
-n +  \#(p \vee q) + \#(\pi_\peq) = - g_{p,q}.
\]
\end{proof}

\section{Real infinitesimal cumulants and real infinitesimal freeness}
\label{sec:real infinitesimal cumulants and real infinitesimal freeness}

In this section we shall prove Theorem \ref{thm:freeness and
  the vanishing of mixed cumulants} which shows that for a
tracial real infinitesimal probability space, real
infinitesimal freeness and the vanishing of real
infinitesimal cumulants are equivalent. The proof depends on
Theorem \ref{thm:product rule}, which gives the formula for
cumulants with products as entries. The proof of Theorem
\ref{thm:product rule} is lengthy and is postponed until
Section \ref{section:product formula}.

In a complex infinitesimal probability space the complex
infinitesimal cumulants $\{ \kappa^\sC_n\}_n$ are defined by
the moment-cumulant equation
\[
\tau'(a_1 \cdots a_n)=\sum_{\pi\in NC(n)}\partial\kappa_\pi^{\sC}(a_1, \dots, a_n). 
\]
The addition of the second term on the right hand side of
Equation (\ref{eq:infinitesimal moment cumulant}) then
affects the infinitesimal cumulants $\{\kappa_n'\}_n$. For
example when we take the limit distribution of the GOE, the
real infinitesimal cumulants now vanish; see
\cite[Thm. 24]{m}. In the case of a real Wishart matrix with
$c ' = 0$, we also have that the real infinitesimal
cumulants vanish; see \cite[Thm. 21]{mvb}. On the other
hand, in a recent paper of Popa, Szpojankowski, and Tseng,
the complex infinitesimal cumulants of the limit joint
distribution of a GUE matrix and its transpose are shown to
either $0$ or $1$, depending on the word. See \cite[\S
  5]{pst}.

\begin{theorem}\label{thm:freeness and the vanishing of mixed cumulants}
Let $(\mathcal{A},\tau,\tau')$ be a real tracial
infinitesimal non-commutative probability space and consider
unital symmetric subalgebras $\mathcal{A}_1, \dots,
\mathcal{A}_s\subset \mathcal{A}$. Then the following
statements are equivalent:
\begin{enumerate}
\item The algebras $\mathcal{A}_1, \dots, \mathcal{A}_s$ are real infinitesimally free.
\item Mixed free cumulants and mixed real infinitesimal cumulants of the subalgebras vanish.
\end{enumerate} 
\end{theorem}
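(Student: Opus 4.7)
The plan is to follow the Nica--Speicher strategy for characterizing freeness by the vanishing of mixed cumulants, adapted to the real infinitesimal setting. The essential tools are the moment--cumulant formula of Theorem \ref{thm:moment cumulant}, the tracial reformulation of freeness in Proposition \ref{prop:non-tracial case}, and the product formula of Theorem \ref{thm:product rule}.

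For $(2)\Rightarrow(1)$, I will assume the vanishing of both mixed $\kappa_n$ and mixed $\kappa'_n$, and verify conditions $(i)$ and $(ii)$ of Proposition \ref{prop:non-tracial case} for centered cyclically alternating $a_1,\ldots,a_n$. The ordinary moment--cumulant expansion \eqref{eq:moment_cumulant} for $\tau(a_1\cdots a_n)$ collapses to a sum over non-crossing partitions with monochromatic blocks; cyclic alternation forces each such block to be a singleton, which vanishes by centering, giving $(i)$. For $\tau'(a_1\cdots a_n)$, the expansion $\sum_\pi\nabla\kappa_\pi$ collapses similarly: every non-distinguished block of $\pi$ must be a singleton that vanishes, so only $\pi=1_n$ contributes, yielding $\tau'(a_1\cdots a_n) = \kappa'_n(a_1,\ldots,a_n) + \dot\kappa_n(a_1,\ldots,a_n) = \dot\kappa_n(a_1,\ldots,a_n)$ by hypothesis. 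I will then show that the surviving configurations in $\dot\kappa_n = \sum_{\sigma\in S_\NC^{\delta,a}(n,-n)}\kappa_{\sigma/2}$---those whose through-cycles pick out monochromatic subsets of indices---reproduce exactly the pairing $\tau(a_1a_{k+1}^t)\cdots\tau(a_ka_n^t)$ of condition $(ii)$, the key observation being that through-cycle structure plus monochromaticity plus cyclic alternation together force the unique ``half-translation'' matching.

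For $(1)\Rightarrow(2)$, the vanishing of mixed $\kappa_n$ is the classical Nica--Speicher theorem applied to $(\cA,\tau)$. I prove the vanishing of mixed $\kappa'_n$ by induction on $n$, with $n=1$ vacuous. For the inductive step on $a_1,\ldots,a_n$ with not all $j_i$ equal, if some $j_i=j_{i+1}$ I apply Theorem \ref{thm:product rule} to the merged tuple $\bm{a}_i=a_ia_{i+1}$, expressing the inductively vanishing cumulant $\kappa'_{n-1}(\bm{a}_1,\ldots,\bm{a}_{n-1})$ as a sum in which $\kappa'_n(a_1,\ldots,a_n)$ is the leading term and all other terms are products of $\kappa_{|W|}$'s (vanishing on mixed $W$ by classical freeness) with lower-order $\kappa'_{|V|}$'s (vanishing by induction); this isolates $\kappa'_n(a_1,\ldots,a_n)=0$. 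The remaining case is alternating arguments, for which Möbius inversion \eqref{eq:infinitesimal cumulant moment} expresses $\nabla\kappa_n$ purely in terms of $\partial\tau_\pi$; substituting the explicit formulas of Proposition \ref{prop:non-tracial case} (after using that singletons of constants affect only $\kappa_1$ to reduce to the centered case) produces a sum that cancels with $\dot\kappa_n$ computed by direct expansion, yielding $\kappa'_n=0$.

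The main obstacle is the proper bookkeeping of annular contributions. The complex product formula \eqref{eq:complex product formula} is indexed only by $\NC(n)$ with $\pi\vee\rho_r=1_n$, but the real formula of Theorem \ref{thm:product rule} adds genuine spatial terms from $S_\NC^\delta(n,-n)$ encoding the transpose action. Tracking how these annular terms behave under the product-to-alternating reduction, and verifying the clean cancellation between the $\partial$ and $\delta$ pieces in the alternating centered case, will rely on the conjugate-pair structure of Remark \ref{remark:conjugate pairs} and the disjoint decomposition of Lemma \ref{lemma:annular reduction}, and constitutes the technical heart of the argument.
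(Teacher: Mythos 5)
Your direction $(2)\Rightarrow(1)$ matches the paper's argument: collapse of $\sum_\pi\partial\kappa_\pi$ to zero by the interval/singleton analysis, and identification of the spoke diagram as the unique surviving annular configuration (this is Redelmeier's Lemma~\ref{lem:spoke_diagram}, which the paper invokes), yielding conditions $(i)$ and $(ii)$ of Proposition~\ref{prop:non-tracial case}. That half is fine.

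The gap is in $(1)\Rightarrow(2)$, at the step where you apply Theorem~\ref{thm:product rule} after merging arguments. You assert that besides the leading term $\kappa'_n(a_1,\dots,a_n)$, ``all other terms are products of $\kappa_{|W|}$'s \dots with lower-order $\kappa'_{|V|}$'s.'' That is false for the \emph{real} product formula: its right-hand side contains the genuinely annular sum $\sum_{\sigma\in S_\NC^\delta(n,-n),\,K^\delta(\sigma)\sep\pm M}\kappa_{\sigma/2}(a_1,\dots,a_n)$, whose summands are products of \emph{first-order} cumulants indexed by cycles of $\sigma$ and carry no $\kappa'$ factor at all. Neither classical freeness applied blockwise nor your induction hypothesis disposes of these a priori, since a cycle of $\sigma$ could in principle be monochromatic. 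The paper closes exactly this hole with a separate orbit argument: the separation condition $K^\delta(\sigma)\sep\pm M$ forces the group $\langle\sigma,\rho_r\rangle$ to act transitively on $[\pm n]$ (if it had two orbits, they would be $\delta$-symmetric unions of blocks of $\rho_r$ and $\delta\rho_r^{-1}\delta$, contradicting separation), and transitivity together with mixedness of the tuple forces some cycle of $\sigma$ to be mixed, so $\kappa_{\sigma/2}=0$. You flag the annular bookkeeping as ``the technical heart'' but point at Remark~\ref{remark:conjugate pairs} and Lemma~\ref{lemma:annular reduction}, which are not the relevant tools; without the transitivity argument the inductive step does not close. A secondary, smaller issue: your treatment of the alternating case by ``Möbius inversion producing a sum that cancels with $\dot\kappa_n$'' is not a worked argument. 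The paper instead proves $\kappa'_n=0$ for centred \emph{cyclically} alternating tuples directly by induction in the moment--cumulant formula (obtaining $\tau'=\kappa'_n+\tau'$), extends to non-centred tuples via the observation that $\kappa'_n$ vanishes when some argument is $1$, and then handles all remaining mixed tuples in one stroke by grouping into maximal monochromatic runs (using traciality to make the runs cyclically alternating) and applying the product formula once, rather than merging one adjacent pair at a time.
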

In order to prove Theorem \ref{thm:freeness and the
  vanishing of mixed cumulants} we need the formula for
cumulants with products as entries, which is presented here
as Theorem \ref{thm:product rule}. By using Theorem
\ref{thm:product rule} to prove $(i) \Rightarrow (ii)$ in
Theorem \ref{thm:freeness and the vanishing of mixed
  cumulants}, we obtain a conceptually simpler proof. Since
Theorem \ref{thm:freeness and the vanishing of mixed
  cumulants} is not used in the proof of Theorem
\ref{thm:product rule}, this application does not present a
logical problem. First let us recall a lemma from Redelmeier
\cite[Lemma 3.3]{r2}.

\begin{lemma}\label{lemma:emilys lemma}
If $n$ is odd, all permutations $\sigma\in
S^\delta_{NC}(n,-n)$ have a cycle consisting of a
single-element or a cycle containing two neighbouring
elements.

If $n$ is even, the spoke diagram
$\left\{\Big(k,-(n/2+k)\Big):1\leq k \leq n\right\}$ is the
only permutation in $S^\delta_{NC}(n,-n)$ which does not
have any single-element cycles or any cycle containing two
neighbouring elements.\label{lem:spoke_diagram}
\end{lemma}

\begin{proof}[Proof of Theorem \ref{thm:freeness and the vanishing of mixed cumulants}]
That the vanishing of mixed cumulants implies real
infinitesimal freeness follows easily from the
moment-cumulant formula, as follows. Let $a_1, \dots, a_n$
be centred and cyclically alternating. We must show that
\[
\tau'(a_1 \cdots\ab a_n) = 0 
\]
for $n \geq 3$ and odd, or for $n \geq 2$ and even, that 
\[
\tau'(a_1 \cdots a_n) = \prod_{k=1}^{n/2} \tau(a_k a_{n/2 + k}^t). 
\]
We shall use Equation  (\ref{eq:infinitesimal moment cumulant}) and consider the two terms separately. 

Let's consider $\sum_{\pi \in \NC(n)} \partial
\kappa_\pi(a_1, \dots, a_n)$. We claim that $\partial
\kappa_\pi(a_1, \dots, a_n)\ab = 0$ for all $\pi \in
\NC(n)$. If $\pi$ contains an interval of length greater
than $1$, then there will be mixed cumulants, and so
$\partial \kappa_\pi(a_1, \dots, a_n) = 0$. If $\pi$ has two
or more singletons, one of them will contribute a factor of
$\kappa_1(a_l)$, which equals $0$ by our centering
assumption. So the only possibility is that $\pi$ has only
one interval, and that interval is of length $1$. This can
only happen of $n$ is odd and the singleton is at $(n +
1)/2$ and all other blocks are of size $2$. Then $a_1$ and
$a_n$ will be in a block of size $2$ and $\kappa_2(a_1 ,
a_n) = \kappa_2'(a_1, a_n) = 0$ by our cyclically
alternating assumption.

Now let us consider the second term. The only possible
$\sigma \in S_\NC^\delta(n, -n)$ for which
$\kappa_{\sigma/2}(a_1, \dots, a_n)$ doesn't have a mixed
cumulant is when $\sigma$ is a spoke diagram. This can only
happen when $n = 2m$ is even and $\sigma = (1, -(m+1))(2,
-(m+2)) \cdots (m, -2m) \ab (m+1, -1) \cdots (2m , -m)$. In
this case $\kappa_{\sigma/2}(a_1, \dots, a_n) =
\prod_{k=1}^{n/2} \tau(a_k a_{n/2 + k}^t)$. This proves that
$(i)$ and $(ii)$ of Proposition \ref{prop:non-tracial case}
hold.

For the other direction, $(i) \Rightarrow (ii)$, note first
that real infinitesimal freeness implies the vanishing of
$\kappa'_n(a_1,\ldots, a_n)$ whenever $a_1, \ldots, a_n$ are
centred and cyclically alternating. Indeed, let us prove
this by induction on $n \geq 2$. We have by Definition
\ref{def:real infinitesimal freeness} $(ii)$ and
Eq. (\ref{eq:infinitesimal moment cumulant})
\begin{align*}
0= \tau'(a_1a_2) & = \kappa_2'(a_1, a_2) + \kappa_1'(a_1) \kappa_1(a_2) + \kappa_1(a_1) \kappa_1'(a_2) + \kappa_2(a_1, a_2^t) \\
&  \mbox{} = \kappa_2'(a_1, a_2) + \tau(a_1 a_2^t) = \kappa_2'(a_1, a_2). 
\end{align*} 
Thus $\kappa_2'(a_1, a_2) = 0$. Using induction and the
argument about the intervals of $\pi$ used above to prove
$(ii) \Rightarrow (i)$ (that vanishing of mixed cumulants
implies real infinitesimal freeness) we get that
\[
\sum_{\pi < 1_n} \partial \kappa_\pi(a_1, \dots, a_n) = 0,
\]
and for $\sigma \in S_\NC^\delta(n, -n)$, we have by Lemma
\ref{lemma:emilys lemma}, $\kappa_{\sigma/2}(a_1, \dots,
a_n) = 0$ unless $\sigma$ is a spoke diagram. Thus for $n
\geq 2$ we have for $n$ even
\[
\tau'(a_1 \cdots a_n) = \kappa_n'(a_1, \dots, a_n) + \tau(a_1a_{n/2+1}^t) \cdots \tau(a_{n/2} a_n^t),
\] 
and when $n$ is odd
\[
\tau'(a_1 \cdots a_n) = \kappa_n'(a_1, \dots, a_n).
\]
This proves that when $a_1, \dots, a_n$ are centred and
cyclically alternating we have for $n \geq 2$ that
$\kappa_n'(a_1, \dots, a_n) = 0$.

Now let us show that the same conclusion holds when we only
assume that $a_1, \dots, a_n$ are cyclically alternating. We
achieve this by showing that $\kappa_n'(a_1, \dotsm a_n) =
0$ whenever there is $l$ such that $a_l = 1$.  For
convenience of notation let us assume that $l = n$. By
(\ref{eq:infinitesimal moment cumulant}) we have by
induction
\begin{align*}
\tau'(a_1 \cdots a_n) & = 
\sum_{\pi \in \NC(n)} \partial \kappa_\pi(a_1, \dots, a_{n-1}, 1)
+ \kern-0.5em\sum_{\sigma \in S_\NC^\delta(n, -n)} \kern-0.5em
\kappa_{\sigma/2}(a_1, \dots, a_{n-1}, 1)\\
&\stackrel{(*)}{=} \kappa_n'(a_1, \dots, a_{n-1}, 1) + 
\sum_{\pi \in \NC(n-1)}\partial\kappa_\pi(a_1, \dots, a_{n-1}) \\
&\qquad\mbox{}+ 
\sum_{\sigma \in S_\NC^\delta(n-1, -(n-1))} \kappa_{\sigma/2}(a_1, \dots, a_{n-1})\\
& = \kappa_n'(a_1, \dots, a_{n-1}, 1) + \tau'(a_1, \dots, a_{n-1}),
\end{align*}
where the equality $(*)$ holds because $\partial
\kappa_\pi(a _1, \dots, a_{n-1}, 1) = 0$ unless $n$ is a
singleton of $\pi$. Hence $\kappa_n'(a_1, \dots, a_{n-1}, 1)
= 0$. Thus for $n \geq 2$ and $\centre{a}_i = a_i -
\tau(a_i)$ we have
\[
\kappa'_n(a_1, \dots, a_n)
=
\kappa'_n(\centre{a}_1, \dots, \centre{a}_n).
\]

Now let us lift the requirement that the elements are
cyclically alternating, We prove our claim by induction on
$n \geq 2$. Now suppose $a_i \in \cA_{j_i}$ and $j_1 \not =
j_2$. Then
\[
\kappa'_2(a_1, a_2)
= \kappa'_2(\centre{a}_1, \centre{a}_2) = 0,
\]
because $\centre{a}_1, \centre{a}_2$ are centred and
cyclically alternating. Our induction hypothesis will be
that for $2 \leq l < n$ we have
\[
\kappa'_l(a_1, \dots, a_n) = 0
\]
whenever the $a_i$'s are not all from the same
subalgebra. We now prove the claim for $l = n$.

Given $a_1, \dots, a_n$ we let $m_1, \dots, m_r$ be such
that for $1 \leq l \leq r$ the elements
\[
a_{m_1 + \cdots + m_{l-1}+1}, \dots, a_{m_1 + \cdots + m_l}
\] 
are all from the same subalgebra, but for adjacent $l$'s are
from different subalgebras. By the traciality of $\tau$ and
$\tau'$ we may assume this holds for cyclically adjacent
$l$'s as well. Now let
\[
A_l = a_{m_1 + \cdots + m_{l-1}+1} \cdots a_{m_1 + \cdots + m_l}.
\]
Then $A_1, \dots, A_r$ are cyclically alternating, so by our
earlier discussion we have $\kappa_r'(A_1, \dots,\ab A_r) =
0$. By Theorem \ref{thm:product rule}, the formula for
cumulants with products for entries, we have
\begin{align*}\lefteqn{
0 =  \kappa'_r(A_1, \dots, A_r)} \\
&  = 
\mathop{\sum_{\pi \in \NC(n)}}_
{\pi \vee \rho_r = 1_n}
 \partial\kappa_\pi(a_1, \dots, a_n)
+
\mathop{\sum_{\sigma \in S_\NC^\delta(n, -n)}}_
{K^\delta(\sigma) \sep \pm M}
\kappa_{\sigma/2}(a_1, \dots, a_n)
\end{align*}
where $\rho_r$ is the interval partition $\{ (1, \dots,
m_1), \dots, (m_1 + \cdots + m_{r-1} + 1, \dots,\ab m_1 +
\cdots + m_r)\}$ and $M = \{m_1, \dots, m_1 + \cdots +
m_r\}$.  By $K^\delta(\sigma) \sep \pm M$ we mean that no
two points of $\pm M$ are in the same cycle of
$K^\delta(\sigma) = \delta \gamma_n^{-1} \delta \sigma^{-1}
\gamma_n$. See \S \ref{section:product formula} for more
explanation and \S \ref{section:small example} for a small
example illustrating the definitions and statements. By
induction on $n$, the first term simplifies to
\[
\mathop{\sum_{\pi \in \NC(n)}}_
{\pi \vee \rho_r = 1_n}
 \partial\kappa_\pi(a_1, \dots, a_n) = \kappa_n'(a_1, \dots, a_n).
\]
Thus, we only have to prove
\[
\mathop{\sum_{\sigma \in S_\NC^\delta(n, -n)}}_
{K^\delta(\sigma) \sep \pm M}
\kappa_{\sigma/2}(a_1, \dots, a_n) = 0.
\]
This amounts to showing that if $K^\delta(\sigma) \sep \pm
M$ then the subgroup $\langle \sigma , \rho_r\rangle$
generated by $\sigma$ and $\rho_r$ acts transitivity on
$[\pm n]$, as this will force $\sigma$ to contain a cycle
that connects two subalgebras, which then means we we can
apply the rule for vanishing of mixed (first order)
cumulants.

Since $\sigma^{-1} = \delta \sigma \delta$ the orbits of
$\langle \sigma , \rho_r\rangle$ are symmetric with respect
to $\delta$. Suppose there is more than one orbit of
$\langle \sigma , \rho_r\rangle$, then as it is a union of
cycles of $\rho_r$ and $\delta \rho_r^{-1}\delta$ there are
$j_1, j_2$ and $k_1, k_2$ such that the orbits are contained
in
\begin{align*}\lefteqn{
\{ m_1 + \cdots + m_{j_1 -1}+1, \dots, m_1 + \cdots +m_{j_2} \}  }\\
& \mbox{} \cup \{ -(m_1 + \cdots + m_{k_1 -1}+1), \dots, -(m_1 + \cdots m_{k_2}) \}
\end{align*}
or
\begin{align*}\lefteqn{
\{ m_1 + \cdots + m_{k_1 -1}+1, \dots, m_1 + \cdots +m_{k_2} \} }\\
& \mbox{} \cup
\{ -(m_1 + \cdots + m_{j_1 -1}+1), \dots, -(m_1 + \cdots m_{j_2}) \}  
\end{align*} 
Thus $K^\delta(\sigma)$ does not separate the points of $\pm M$. Hence
\[
\mathop{\sum_{\sigma \in S_\NC^\delta(n, -n)}}_
{K^\delta(\sigma) \sep \pm M}
\kappa_{\sigma/2}(a_1, \dots, a_n) = 0
\]
and thus $\kappa_n'(a_1, \dots, a_m) = 0$. Hence mixed
cumulants vanish. This proves $(i) \Rightarrow (ii)$.
\end{proof}

\section{The product formula}\label{section:product formula}

The product formula is a key tool in free probability for
computing cumulants. It gives an explicit formula for
computing the cumulants of products of random variables,
e.g. $\kappa_3(a_1a_2, a_3, a_4 a_5 a_6)$ in terms of the
cumulants of $\{a_1, \dots, a_6\}$. See \cite[Lecture
  14]{ns} for a discussion and examples. In particular by
considering free compressions by matrices of finite rank
plus scalar the results of \cite{chs} and \cite{s} can be
recovered, see \cite[\S 5]{mt}. In \S \ref{section:small
  example} we give an example for the real free
infinitesimal cumulants of the limit distribution of the
square of a GOE random matrix.

In \cite{mst} the product formula was extended to second
order cumulants and very recently to third order cumulants
\cite{ams}. Unfortunately we cannot obtain Theorem
\ref{thm:product rule} from these results because of our
symmetry condition involving $\delta$. The remainder of the
paper will be devoted to proving Theorem \ref{thm:product
  rule} below.

Throughout we shall suppose $m_1 , \dots, m_r \geq 1$, $m =
m_1 + \cdots + m_r$, and
\[
M = \{m_1, m_1 + m_2, \dots, m_1 + \cdots + m_r\}.
\] 
$\gamma_m = (1, 2, \dots, m) \in S_m$. $K(\pi) = \pi^{-1}
\gamma_m$. We say a permutation $\sigma$ \textit{separates}
the point of $M$ if each point of $M$ is in a different
cycle of $\sigma$. Let $\sigma|_M$ be the permutation of $M$
given by the first return map\footnote{For $a \in M$ we set
$\sigma|_M(a) = \sigma^k(a)$ where $k \geq 1$ is the
smallest integer such that $\sigma^k(a) \in M$.} under
$\sigma$. Then $\sigma$ separates the points of $M$ if and
only if $\sigma|_M = \id_M$. Let
\begin{multline*}
\gamma_{\vm}
=
(1, \dots, m_1)(m_1 + 1, \dots, m_1 + m_2)
\cdots \\
(m_1 + \cdots + m_{r-1} + 1, \dots,
m_1 + \cdots + m_r).
\end{multline*}
When necessary we shall also consider $\gamma_\vm$ to be the
partition whose blocks are the cycles of $\gamma_\vm$.

For $1 \leq k \leq m$ let $I_k = \{ m_1 + \cdots + m_{k-1}
+1, \dots, m_1 + \cdots + m_k\}$. Given $V \subseteq [r]$ we
let $V_\vm = \cup_{k\in V} I_k \subseteq [m]$. Given $\pi
\in \cP(r)$ with $\pi = \{V_1, \dots, V_l\}$ we let $\pi_\vm
\in \cP(m)$ be the partition with blocks $\{ V_{1, \vm},
\dots, V_{l,\vm}\}$. If $\pi \in \NC(r)$ then $\pi_\vm \in
\NC(m)$, and conversely. We also view $\pi_\vm$ as the
permutation with cycles being the blocks of $\pi_\vm$ and
the elements in increasing order. See also
\cite[Def.~9]{mst}.

We shall let $A_1 = a_1 \cdots a_{m_1}$, $A_2 = a_{m_1 + 1}
\cdots\ab a_{m_1 + m_2}$, \dots, and $A_r = a_{m_1 + \cdots
  + m_{r-1} + 1} \cdots a_{m_1 + \cdots + m_{r}}$. Our goal
is to compute $\kappa_r'(A_1, \dots, A_r)$ in terms of the
cumulants and infinitesimal cumulants of $\{ a_1, \dots,
a_m\}$ as stated in the next theorem. This should be
compared to Equation (\ref{eq:complex product formula})
which gives the formula in the complex case. In the theorem
below, $K^\delta(\sigma)$ is the symmetric Kreweras
complement defined in Definition \ref{def:symmetric kreweras
  complement}. The equivalence between $K(\pi)$ separating
the points of $M$ and $\pi \vee \gamma_\vm = 1_m$ is proved
in Theorem 15 of \cite{mst}; see Lemma \ref{lemma:separation
  lemma} and the discussion in \cite{mst} preceding theorem
15.

\begin{theorem}\label{thm:product rule}
\begin{equation}\label{eq:the product rule}
\kappa_r'(A_1, \dots, A_r)
= \kern-1em
\sum_{\pi \in \NC(m)} \partial \kappa_\pi(a_1, \dots, a_m)
+ \kern-1.2em
\sum_{\sigma \in S_\NC^\delta(m, -m)} \kern-1.2em \kappa_{\sigma/2}
(a_1, \dots, a_m),
\end{equation}
where the first sum is over all $\pi$ such that $\pi \vee
\gamma_\vm = 1_m$ $($equivalently that $K(\pi)$ separates
the points of $M)$ and the second term is over all $\sigma$
such that $K^\delta(\sigma)$ separates the points of $\pm
M$.
\end{theorem}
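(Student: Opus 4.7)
My plan is to proceed by induction on $r$, matching the three-part structure indicated by the authors. The base case $r = 1$ is essentially the moment-cumulant relation~(\ref{eq:infinitesimal moment cumulant}): when $r = 1$ we have $\gamma_\vm = \gamma_m$, so $\pi \vee \gamma_\vm = 1_m$ holds automatically for every $\pi \in \NC(m)$, and $\pm M = \{m, -m\}$. It then remains to verify directly that every $\sigma \in S_\NC^\delta(m, -m)$ sends $m$ and $-m$ to distinct cycles of $K^\delta(\sigma)$; granting this, the right-hand side of~(\ref{eq:the product rule}) reduces exactly to $\tau'(a_1 \cdots a_m) = \tau'(A_1) = \kappa_1'(A_1)$.

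For the inductive step I would apply Möbius inversion to~(\ref{eq:infinitesimal cumulant moment}) at level $r$ to write
\[
\nabla \kappa_r(A_1, \ldots, A_r) = \sum_{\pi \in \NC(r)} \mu(\pi, 1_r)\, \partial \tau_\pi(A_1, \ldots, A_r),
\]
and then use $\kappa_r'(A_1, \ldots, A_r) = \nabla \kappa_r(A_1, \ldots, A_r) - \dot\kappa_r(A_1, \ldots, A_r)$. Each $\tau'$-factor created by $\partial$ is expanded using Theorem~\ref{thm:moment cumulant}, and each $\tau$-factor using the ordinary moment-cumulant formula, turning the right-hand side into a double sum over $\pi \in \NC(r)$ paired with either a refinement $\hat\pi \in \NC(m)$ (non-annular part) or an annular $\hat\sigma \in S_\NC^\delta(m, -m)$ (annular part). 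Interchanging the order of summation so that $\hat\pi$ or $\hat\sigma$ is fixed first, the remaining Möbius sum over $\pi \in \NC(r)$ vanishes by the classical identity $\sum_{\pi_0 \leq \pi \leq 1_r} \mu(\pi, 1_r) = 0$ unless $\pi_0 = 1_r$; this kills every $\hat\pi$ with $\hat\pi \vee \gamma_\vm \neq 1_m$ and every $\hat\sigma$ whose $K^\delta(\hat\sigma)$ does not separate $\pm M$.

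The non-annular half (Section~\ref{sec:product formula first step}) follows the template of the classical Nica--Speicher product formula~\cite[Thm.\ 14.4]{ns}, with Leibniz-style bookkeeping to track which block of $\hat\pi$ carries the infinitesimal derivative. The subtlety is that the subtraction of $\dot\kappa_r(A_1, \ldots, A_r)$ exactly absorbs those annular $\hat\sigma \in S_\NC^\delta(m,-m)$ whose conjugate through-cycles sit inside a single block $V_\vm$ of $\pi_\vm$: these $\hat\sigma$'s project at level $r$ to a $\sigma_0 \in S_\NC^{\delta,a}(r,-r)$ whose through cycles collapse to a point, and they must be redistributed block-by-block using Lemma~\ref{lemma:annular reduction}. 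Section~\ref{sec:product formula second step} then collects the remaining genuinely annular $\hat\sigma$ and identifies them with an annular analogue of the Kreweras correspondence between $S_\NC^\delta(m, -m)$ and pairs consisting of an outer $\sigma_0 \in S_\NC^\delta(r, -r)$ together with disk-level refinements filling in its blocks.

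The hard part will be establishing this annular Kreweras correspondence: one must show that the condition $K^\delta(\hat\sigma) \sep \pm M$ corresponds precisely to the through cycles of $\hat\sigma$ spanning all outer blocks of $\vm$, and do this while keeping the symmetry requirement that $\hat\sigma \delta$ is a pairing in force. The difficulty is that a conjugate pair of through cycles of $\hat\sigma$ may either project to a conjugate pair of through cycles of an outer annular permutation (contributing to the annular part) or collapse entirely inside a single $V_\vm$ (contributing to the non-annular part or to $\dot\kappa_r$); verifying that these two possibilities partition $S_\NC^\delta(m,-m)$ cleanly, and that the resulting count of each admissible $\hat\sigma$ reduces to exactly $1$, is the combinatorial crux of the proof.
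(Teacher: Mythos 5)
Your overall architecture (induction on $r$, base case reducing to the moment--cumulant formula (\ref{eq:infinitesimal moment cumulant}), a Nica--Speicher-style treatment of the non-annular part, and a separate Kreweras-type correspondence for the annular part) matches the shape of the paper's argument, and you are right that the base case needs the observation that every $\sigma\in S_\NC^\delta(m,-m)$ automatically separates $\{m,-m\}$. But the proposal has a genuine gap and one substantive error. The gap: the entire combinatorial content of the theorem lies in the two correspondences you defer --- (a) identifying which $\hat\sigma\in S_\NC^\delta(m,-m)$ arise from the terms with $\pi\neq 1_r$ (the paper's Proposition \ref{prop:first}, via the bijection $S_2=\tilde S_2$ of Lemma \ref{lemma:second bijection}, which requires the partitioned-permutation machinery of \cite{mst} to verify that the assembled $\tau$ is non-crossing and that $\pi$ and the marked block $V$ are recovered uniquely from $\tau$ by restricting $K^\delta(\tau)$ to $\pm M$), and (b) the correspondence between $S_\NC^\delta(r,-r)$ and the $\tau$'s for which $K^\delta(\tau)|_{\pm M}$ has through cycles (Proposition \ref{prop:third bijection}, which needs the $\gamma_\sz$-reduction of \cite[Lemma 24]{mvb} even to define the refinement of an annular $\sigma$, plus the computation showing $K^\delta(\tau)|_{\pm M}=K^\delta(\sigma_\vm)|_{\pm M}$). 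Declaring these ``the combinatorial crux'' and stopping there leaves the theorem unproved; in particular your M\"obius-cancellation step for the annular part presupposes that, for a fixed $\hat\sigma$, the compatible pairs $(\pi,V)$ form an interval in $\NC(r)$ counted once each, which is exactly what Lemmas \ref{lemma:annular restrictions} and \ref{lemma:second bijection} establish and is not automatic.

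The error: you claim that subtracting $\dot\kappa_r(A_1,\dots,A_r)$ absorbs the $\hat\sigma$ whose through cycles sit inside a single block $V_\vm$. It is the other way around: those $\hat\sigma$ are generated by the block $V$ of $\pi$ carrying the $\tau'$ in $\partial\tau_\pi$ (they are the paper's $S_2$ together with part of $S_1$, and $K^\delta(\hat\sigma)|_{\pm M}$ then has \emph{no} through cycles), whereas $\dot\kappa_r(A)=\sum_{\sigma_0\in S_\NC^{\delta,a}(r,-r)}\kappa_{\sigma_0/2}(A)$ expands into $\hat\sigma$ whose through cycles span several blocks (a sub-family of the paper's $S_3$). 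A conjugate pair of through cycles confined to one $V_\vm\cup\delta(V_\vm)$ does not ``project to a $\sigma_0\in S_\NC^{\delta,a}(r,-r)$''; it projects to nothing annular at level $r$. Finally, note that the paper takes a route that avoids your M\"obius sums altogether: it expands $\phi'(A_1\cdots A_r)=\phi'(a_1\cdots a_m)$ by (\ref{eq:infinitesimal moment cumulant}) at both levels, uses the induction hypothesis to expand $\partial\kappa_\pi(A_1,\dots,A_r)$ for $\pi<1_r$, and solves for the $1_r$-term $\kappa_r'(A_1,\dots,A_r)$ by subtraction --- which also explains why the induction on $r$ is genuinely needed, something your M\"obius version as described never uses.
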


\section{Small Examples with the Square of a Semi-Circle}\label{section:small example}

To illustrate the notation let us examine the infinitesimal
free cumulants of the square of a semi-circular operator,
where the infinitesimal law is that of the GOE \cite{m}. The
results of this section are not needed in the rest of the
paper, but the example will make it easier to follow the
notation and logic of the proof.

Suppose $r = m_1 = m_2 = 2$ and $a_1 = a_2 = a_3 = a_4 = s$
where $s$ is a standard semi-circular operator with mean $0$
and variance $1$ and infinitesimal law\footnote{We know the
infinitesimal moments from \cite{m} and if we use
(\ref{eq:infinitesimal moment cumulant}) we get the
vanishing of the infinitesimal free cumulants of $s$.}
$\kappa_r' = 0$ for $r = 1, 2, \dots$. Let $x = s^2$. Recall
that for the GOE we have $\tau(s^2) = 1$, $\tau(s^4) = 2$,
$\tau'(s^2) = 1$, and $\tau'(s^4) = 5$, see \cite[Lemma
  23]{m}. By (\ref{eq:infinitesimal moment cumulant})
\[
\tau'(x^2) = \kappa_2'(x, x) + 2 \kappa_1(x) \kappa_1'(x) + \kappa_2(x, x^t).
\]
Now $x = s^2$ is a free Poisson operator, so $\kappa_2(x, x) = 1$. Thus
\[
\kappa_2'(x, x) = \tau'(s^4) - 2 \tau(s^2) \tau'(s^2) - \kappa_2(x, x)
= 2.
\]
If one compares this with Figure \ref{fig:four-four-annulus}
below, one sees that of the $5$ elements of $\NC_2^\delta(4,
-4)$, only the two pairings
\[
\sigma_1 = (1, -4)(-1, 4)(2, 3)(-2, -3) \qquad
\sigma_2 = (1, 4)(-1, -4)(2, -3)(-2, 3)
\]
with Kreweras complements
\[
K^\delta(\sigma_1) = (1, 3, -4)(2)(-1, 4, -3)(-2)
\]
\[
K^\delta(\sigma_2) = (1, -2, 3)(4)(-1, -3, 2)(-4)
\]
have the property that $K^\delta(\sigma)$ separates the
points $2$ and $4$. Thus these two ways of computing
$\kappa_2'(x, x)$, using either (\ref{eq:infinitesimal
  moment cumulant}) or (\ref{eq:the product rule}), agree.

One can do this again to find $\kappa_3'(x, x, x)$. By
(\ref{eq:infinitesimal moment cumulant}) we have
\begin{align*}\lefteqn{
\tau'(x^3) }\\
& = 
\kappa_3'(x, x, x) + 3 \kappa_1'(x) \kappa_2(x, x) + 3 \kappa_1(x) \kappa_2'(x, x) \\
&\mbox{} + 
3 \kappa_1'(x) \kappa_1(x)^2 +
\sum_{\sigma \in S_\NC^\delta(3, 3)} \kappa_{\sigma/2}(x, x, x). 
\end{align*}
To evaluate the \textsc{lhs} we have $\tau'(x^3) =
\tau'(s^6) = 22$, by \cite[Lemma 23]{m}. As for the
\textsc{rhs} we have that $\kappa_{\sigma/2}(x, x, x) = 1$
for all $\sigma$, as $x$ is a free Poisson. By
\cite[Prop. 19]{mvb}, $|S_\NC^\delta(3, -3)| = 6$. Thus the
\textsc{rhs} of the equation above is
\[
\kappa_3'(x, x, x) + 3 + 6 +  3 + 6 = 18 + \kappa_3'(x, x, x).
\]
Solving for $\kappa_3'(x, x, x)$, we get $\kappa_3'(x, x, x) = 4$. 
\begin{figure}[t]
\noindent
\includegraphics{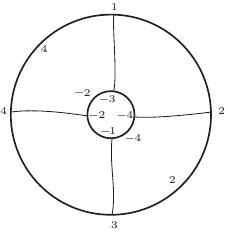}\hfill \includegraphics{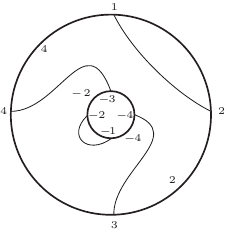}\hfill \includegraphics{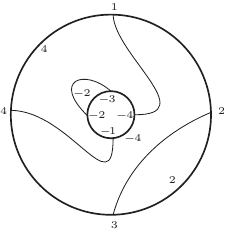}

\noindent
\hfill\includegraphics{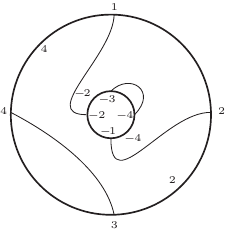}\hfill \includegraphics{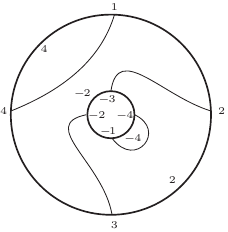}\hfill\hbox{}
\caption{\label{fig:four-four-annulus}\small The 5
  non-crossing pairings of a $(4, -4)$-annulus mentioned in
  \S \ref{section:small example}. We have marked the
  positions of the points $\{2, -2, 4, -4\}$ in the Kreweras
  complement. Only the third and the fifth have the property
  that $K^\delta(\pi)$ separates the points of $\{2, -2, 4,
  -4\}$. These are the two that contribute to $\kappa_2'(x,
  x)$. \hbox{}\hfill$\diamond$}
\end{figure}

Now turning to equation (\ref{eq:the product rule}) we have to evaluate
\[
\mathop{\sum_{\sigma \in S_\NC^\delta(6, -6)}}_%
{K^\delta(\sigma) \sep \{\pm 2, \pm 4, \pm 6\}}
\kappa_{\sigma/2}(s, s, s, s, s, s, s).
\]
Since $s$ is semi-circular, the only $\sigma$'s that appear
are pairings and their contribution is the same for all
pairings $\sigma$. In \cite[Lemma 23]{m} we found that the
number of pairings is 22. If one examines these 22 annular
pairings\footnote{The details are not provided here, but it
is instructive to examine this case.} one finds that only
the following 4 satisfy the condition that
$K^\delta(\sigma)$ separates the points of $\{ \pm 2, \pm 4,
\pm 6\}$
\[
(1, -4)(2, -5)(3, -6)(-1, 4)(-2, 5)(-3, 6),
\]
\[
(1, -6)(2, 3)(4, 5)(-1, 6)(-2, -3)(-4, -5),
\]
\[
(1, 6)(2, 3)(4, -5)(-1, -6)(-2, -3)(-4, 5),
\]
\[
(1, 6)(2, -3)(4, 5)(-1, -6)(-2, 3)(-4, -5).
\]
Thus using equation (\ref{eq:product rule}) we also get the
conclusion that $\kappa_3'(x, x, x) = 4$. One may (naively)
conjecture that $\kappa'_n(x, \dots, x) = 2^{n-1}$, based on
three examples. However, this conjecture turns out to be
correct; indeed it is shown in \cite{mvb2} that the real
infinitesimal $r$-transform, $r(z) = (1 - 2 z)^{-1}$.

We can also compare the infinitesimal law of $x$ to that of
$y$ which has the limit distribution of a real Wishart
matrix with $c = 1$ and $c' = 0$ (see \cite[Corollary
  18]{mvb}). Then the base distribution of $x$ and $y$ are
both Marchenko-Pastur with parameter $1$ (see
\cite[Def.~2.11]{ms}). However we have $\kappa'_2(x, x) = 2$
and $\kappa_2'(y, y) = 1$, see \cite[Cor.~18]{mvb}. So the
infinitesimal laws of $x$ and $y$ are different even though
the base distributions are the same.

\section{The proof of Theorem \ref{thm:product rule}: outline}
\label{sec:product rule outline}

In this section we will present the outline of the proof of
Theorem \ref{thm:product rule} and the show that the proof
can be reduced to demonstrating Equations (\ref{eq:product
  first step}) and (\ref{eq:product second step}). The proof
of the claim about (\ref{eq:product first step}) will be
given in Proposition \ref{prop:first} and the proof of the
claim about (\ref{eq:product second step}) will be given in
Proposition \ref{prop:third bijection}.

\begin{lemma}[{\cite[Lemma 14]{mst}}]\label{lemma:separation lemma}
Suppose $\rho \in \NC(m)$. Then $\rho \vee \gamma_\vm =
\pi_\vm$ if and only if $\rho^{-1}\pi_\vm$ separates the
points of $M$.
\end{lemma}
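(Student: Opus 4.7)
The plan is to establish the equivalence by two successive reductions that bring the statement to a classical fact about the Kreweras complement.

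First I would observe that both sides of the equivalence force $\rho \le \pi_{\vm}$. The forward direction is immediate from $\rho \le \rho \vee \gamma_{\vm} = \pi_{\vm}$. For the reverse, suppose $\rho^{-1}\pi_{\vm}$ separates $M$; if some block of $\rho$ had elements in two distinct blocks $V_{j_1,\vm}, V_{j_2,\vm}$ of $\pi_{\vm}$, then iterating $\rho^{-1}\pi_{\vm}$ starting from a right-endpoint of some $I_k \subseteq V_{j_1,\vm}$ would drag us through a right-endpoint of some $I_{k'} \subseteq V_{j_2,\vm}$, trapping two elements of $M$ in one cycle and contradicting separation.

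Once $\rho \le \pi_{\vm}$ is in hand, both conditions decompose block-by-block over $\pi_{\vm}$. On each block $V_{j,\vm}$ the cycle of $\pi_{\vm}$ supplies a cyclic order in which $\rho|_{V_{j,\vm}}$ is non-crossing, and the standard fact that for $\rho \le \sigma$ in $\NC(m)$ the product $\rho^{-1}\sigma$ restricted to a cycle of $\sigma$ is the Kreweras complement of the corresponding restriction of $\rho$ (with respect to that cyclic order) identifies $\rho^{-1}\pi_{\vm}|_{V_{j,\vm}}$ with $K(\rho|_{V_{j,\vm}})$ taken inside $V_{j,\vm}$. Under this identification, $\rho \vee \gamma_{\vm} = \pi_{\vm}$ becomes the collection of statements $\rho|_{V_{j,\vm}} \vee \gamma_{\vm}|_{V_{j,\vm}} = 1_{V_{j,\vm}}$, and separation of $M$ by $\rho^{-1}\pi_{\vm}$ becomes separation of $M \cap V_{j,\vm}$ by $K(\rho|_{V_{j,\vm}})$, one for each $j$.

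This reduces everything to the single-block case: for $\rho \in \NC(n)$ and intervals $I_1,\dots,I_r$ partitioning $[n]$ with right-endpoint set $M$, one must prove $\rho \vee \gamma_{\vm} = 1_n$ iff $K(\rho) = \rho^{-1}\gamma_n$ separates $M$. To handle this I would combine the non-crossing identity $\#(\rho) + \#(K(\rho)) = n+1$, the genus-zero Riemann--Hurwitz equality $\#(\rho) + \#(\gamma_{\vm}) + \#(\rho^{-1}\gamma_{\vm}) = n + 2\,\#(\rho \vee \gamma_{\vm})$ (valid because $\rho$ and $\gamma_{\vm}$ are both non-crossing), and the factorization $\rho^{-1}\gamma_{\vm} = K(\rho)\cdot \mu$, where $\mu = \gamma_n^{-1}\gamma_{\vm}$ is a single $r$-cycle supported on $M$ and the identity elsewhere. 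Writing $t$ for the number of cycles of $K(\rho)$ that meet $M$, a short count of cycles of $K(\rho)\mu$ in terms of $t$ converts the Riemann--Hurwitz equation into $\#(\rho \vee \gamma_{\vm}) = 1 \Longleftrightarrow t = r \Longleftrightarrow K(\rho)$ separates $M$.

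I expect the main obstacle to be the block-decomposition step: the identification of $\rho^{-1}\pi_{\vm}|_{V_{j,\vm}}$ with the Kreweras complement inside $V_{j,\vm}$ relies on the geodesic (transposition-length) characterization of the partial order on $\NC(m)$, namely $|\rho| + |\rho^{-1}\pi_{\vm}| = |\pi_{\vm}|$. Once this geometric piece is in place, both the reduction and the single-block genus arithmetic are routine.
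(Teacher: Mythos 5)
This lemma is not proved in the paper at all: it is imported verbatim from \cite[Lemma 14]{mst}, so there is no in-house argument to compare yours against and your proposal has to stand on its own. Its architecture is the natural one (and matches the methods of \cite{mst}): force $\rho\le\pi_\vm$, decompose both conditions block-by-block over $\pi_\vm$ via the factorization of $\rho^{-1}\pi_\vm$ into blockwise Kreweras complements, and settle the one-block case by cycle counting. One minor correction: the factorization $\rho^{-1}\pi_\vm=\prod_V(\rho|_V)^{-1}c_V$ is purely formal once $\rho\le\pi_\vm$ (group the cycles of $\rho$ according to the block of $\pi_\vm$ containing them); it does not rely on the geodesic identity $|\rho|+|\rho^{-1}\pi_\vm|=|\pi_\vm|$, which is a consequence rather than an input.

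The genuine gap sits exactly where you park the whole argument: the equality $\#(\rho)+\#(\gamma_\vm)+\#(\rho^{-1}\gamma_\vm)=n+2\,\#(\rho\vee\gamma_\vm)$, justified only by ``valid because $\rho$ and $\gamma_\vm$ are both non-crossing.'' Planarity of a \emph{pair} of permutations is not an off-the-shelf consequence of each being separately non-crossing, and here it carries the full weight of the statement: the Riemann--Hurwitz \emph{inequality} ($\le$) does give you the direction ``$\rho\vee\gamma_\vm=1_n\Rightarrow K(\rho)$ separates $M$,'' but the converse direction needs the equality, so this cannot be waved through. You must either prove planarity of $(\rho,\gamma_\vm)$ for $\rho$ non-crossing and $\gamma_\vm$ an interval partition, or route around it (e.g.\ via the complementation bijection underlying \cite[Theorem 14.4]{ns}). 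Two smaller holes: (a) your claim that separation of $M$ by $\rho^{-1}\pi_\vm$ already forces $\rho\le\pi_\vm$ is supported only by the heuristic ``iterating would drag us through,'' which is not an argument; note that wherever the paper invokes the lemma (Lemma \ref{lemma:first restriction}, the set $\tilde N_2$) the hypothesis $\rho\le\pi_\vm$ is carried separately, so you may simply add it and drop this step. (b) In the count for $K(\rho)\mu$, the half asserting that the cycle number drops by strictly less than $r-1$ when two points of $M$ share a cycle of $K(\rho)$ needs a line: restrict to the single orbit of $\langle K(\rho),\mu\rangle$ meeting $M$, observe it supports at least one cycle of $K(\rho)\mu$, whereas equality would force it to support $t-r+1\le 0$ cycles.
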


\begin{lemma}[{\cite[Prop. 11.12]{ns}}]\label{lemma:zeroth order product}
For $\pi \in \NC(r)$ we have 
\begin{equation}\label{eq:product rule}
\kappa_\pi(A_1, \dots, A_r)
=
\sum_{\rho \in \NC(m)} \kappa_\rho(a_1, \dots, a_m)
\end{equation}
where the sum is over all $\rho$ such that $\rho \vee \gamma_\vm = \pi_\vm$. 
\end{lemma}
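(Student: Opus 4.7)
My plan is to prove Lemma \ref{lemma:zeroth order product} by Möbius inversion in the lattice $\NC(r)$, following the classical argument of \cite[Proof of Prop.~11.12]{ns}. First, I would expand the cumulant via the defining Möbius formula
\[
\kappa_\pi(A_1, \ldots, A_r) = \sum_{\pi' \in \NC(r),\, \pi' \leq \pi} \mu(\pi', \pi)\, \tau_{\pi'}(A_1, \ldots, A_r),
\]
and then use the identity $\tau_{\pi'}(A_1, \ldots, A_r) = \tau_{\pi'_\vm}(a_1, \ldots, a_m)$. The latter follows by unpacking each $A_l$ as the ordered product of the $a_j$'s with $j \in I_l$, and observing that a block-by-block trace along $\pi'$ coincides with a block-by-block trace along the inflated partition $\pi'_\vm$.

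Next, I would apply the moment-cumulant formula at the $a$-level to expand
\[
\tau_{\pi'_\vm}(\vec a) = \sum_{\rho \in \NC(m),\, \rho \leq \pi'_\vm} \kappa_\rho(\vec a),
\]
substitute this into the previous step, and interchange the two summations to obtain
\[
\kappa_\pi(\vec A) = \sum_{\rho \in \NC(m)} \kappa_\rho(\vec a) \left(\sum_{\substack{\pi' \in \NC(r) \\ \pi' \leq \pi,\ \rho \leq \pi'_\vm}} \mu(\pi', \pi)\right).
\]
The entire problem then reduces to showing that the parenthesized inner Möbius sum equals $1$ when $\rho \vee \gamma_\vm = \pi_\vm$ and vanishes otherwise.

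To evaluate that inner sum, I would exploit the order-preserving injection $\NC(r) \hookrightarrow \NC(m)$, $\tilde\pi \mapsto \tilde\pi_\vm$, whose image is exactly the interval $[\gamma_\vm, 1_m]$ in $\NC(m)$, i.e.\ the partitions coarser than $\gamma_\vm$. Since every $\pi'_\vm$ occurring in the sum lies in this image, the condition $\rho \leq \pi'_\vm$ is equivalent to $\rho \vee \gamma_\vm \leq \pi'_\vm$; and because $\rho \vee \gamma_\vm$ itself lies in $[\gamma_\vm, 1_m]$, there is a unique $\tilde\rho \in \NC(r)$ with $\tilde\rho_\vm = \rho \vee \gamma_\vm$. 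Pulling the inequality back along the injection converts the constraint to $\tilde\rho \leq \pi'$, and the inner sum collapses by the defining property of the Möbius function: $\sum_{\tilde\rho \leq \pi' \leq \pi} \mu(\pi', \pi) = \delta_{\tilde\rho, \pi}$, which is nonzero precisely when $\tilde\rho = \pi$, i.e.\ when $\rho \vee \gamma_\vm = \pi_\vm$.

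The main obstacle will be the lattice-theoretic bookkeeping in the final step: verifying carefully that the inflation $\tilde\pi \mapsto \tilde\pi_\vm$ is an order-preserving bijection of $\NC(r)$ onto $[\gamma_\vm, 1_m] \subseteq \NC(m)$, and that $\rho \vee \gamma_\vm$ always pulls back to a genuine element of $\NC(r)$. Once this structural correspondence is in hand, the remainder is routine manipulation of the moment-cumulant relation and standard Möbius-function identities.
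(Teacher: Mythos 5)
Your argument is correct, and it coincides with the proof the paper is implicitly relying on: the paper gives no proof of this lemma, citing only \cite[Prop.~11.12]{ns}, and your Möbius-inversion route --- expand $\kappa_\pi$ in moments, identify $\tau_{\pi'}(A_1,\dots,A_r)=\tau_{\pi'_\vm}(a_1,\dots,a_m)$, re-expand in cumulants, interchange sums, and collapse the inner Möbius sum using the lattice isomorphism $\NC(r)\cong[\gamma_\vm,1_m]$ --- is precisely the argument given there. The only point worth writing out carefully in a full version is the verification that inflation is an order isomorphism onto $[\gamma_\vm,1_m]$, which you have already flagged.
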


\noindent
To set up the proof of Theorem \ref{thm:product rule} we
shall set the following notation. Let

\begin{itemize}\itemsep1em
\item
$N_1 = \{ \pi \in \NC(m) \mid K(\pi) \sep M \}$ and  

\item
$N_2 = \{ \pi \in \NC(m) \mid K(\pi)|_M \not= \textit{id}_M\}$.
\end{itemize} 

\medskip\noindent
Then $\NC(m) = N_1 \cup N_2$. 

For $\sigma \in S_\NC^\delta(m, -m)$ let $K^\delta(\sigma) = \delta \gamma_m^{-1} \delta \sigma^{-1} \gamma_m$.  Let
\begin{itemize}\itemsep1em

\item
$S_1 = \{ \sigma \in S_\NC^\delta(m, -m) \mid K^\delta(\sigma) \sep \pm M\}$, 

\item
$S_2 = \{ \sigma \in S_\NC^\delta(m, -m) \mid K^\delta(\sigma)|_{\pm M} \not= \id_{\pm M}$ but $K^\delta(\sigma)|_{\pm M}$ has no through cycles$\}$

\item
$S_3 = \{ \sigma \in S_\NC^\delta(m, -m) \mid K^\delta(\sigma)|_{\pm M}$ has through cycles$\}$.

\end{itemize}\medskip

\noindent
Then $S_1 \cup S_2 \cup S_3 = S_\NC^\delta(m, -m)$.

First we prove (\ref{eq:the product rule}) when $r =
1$. Then, using induction on $r$, we may, for each $\pi \in
\NC(r) \setminus \{ 1_r \}$ expand $\partial \kappa_\pi(A_1,
\dots, A_r)$ using (\ref{eq:the product rule}) and use this
to prove the theorem for $\pi = 1_r$. The main idea is to
expand
\[
\tau'(A_1 \cdots A_m)
=
\tau'(a_1 \cdots a_m)
\]
using the moment-cumulant formula (\ref{eq:infinitesimal
  moment cumulant}), on p.~ \pageref{eq:infinitesimal moment
  cumulant}, in two ways and compare the results.

First we expand $\tau'(A_1 \cdots A_m)$ using the moment-cumulant formula:
\begin{equation}\label{eq:infinitesimal moment cumulant products}
\tau'(A_1 \cdots A_m)
=
\sum_{\pi \in \NC(r)} \partial \kappa_\pi(A_1, \dots, A_r)
\end{equation}
\[
\mbox{} + 
\sum_{\sigma \in S_\NC^\delta(r, -r)} \kappa_{\sigma/2}(A_1, \dots, A_r).
\]
We write the first term on the right-hand side of
(\ref{eq:infinitesimal moment cumulant products}) as
\begin{equation}\label{eq:induction term one}
\kappa'_r(A_1, \dots, A_r)
+
\mathop{\sum_{\pi \in\NC(r)}}_{\pi \not= 1_r}
\partial\kappa_\pi(A_1, \dots, A_r).
\end{equation}
Now applying (\ref{eq:infinitesimal moment cumulant}) to
(\ref{eq:product rule}) we will show in Proposition
\ref{prop:first}, that the second term on the right-hand
side of (\ref{eq:induction term one}) is
\begin{equation}\label{eq:product first step}
\sum_{\rho \in N_2} \partial \kappa_\rho(a_1, \dots, a_m)
+
\sum_{\tau \in S_2} \kappa_{\tau/2}(a_1, \dots, a_m).
\end{equation}
Then we will show in Proposition \ref{prop:third bijection},
that the second term on the right-hand side of
(\ref{eq:infinitesimal moment cumulant products}) equals
\begin{equation}\label{eq:product second step}
\sum_{\tau \in S_3}
\kappa_{\tau/2}(a_1, \dots, a_m).
\end{equation}
When we put these two results together we obtain 
\begin{align}\label{eq:combined formulas}
& 
\kappa'_r(A_1, \dots, A_r) +
\sum_{\rho \in N_2} \partial \kappa_\rho(a_1, \dots, a_m) 
+
\sum_{\tau \in S_2}  \kappa_{\tau/2}(a_1, \dots, a_m) \notag\\
& \mbox{} +
\sum_{\tau \in S_3} \kappa_{\tau/2}(a_1, \dots, a_m) 
= \tau'(A_1 \cdots A_r) = \tau'(a_1 \cdots a_m)\notag \\
& \mbox{} =
\sum_{\rho \in \NC(r)} \partial \kappa_\rho(a_1, \dots, a_m)
 + 
\sum_{\tau \in S_\NC^\delta(r, -r)} \kern-1em
\kappa_{\tau/2}(A_1, \dots, A_r).
\end{align}
Thus
\begin{align*}\lefteqn{
\kappa'_r(A_1, \dots, A_r)} \\
&  =
\sum_{\rho \in \NC(m) \setminus N_2} \partial \kappa_\rho(a_1, \dots, a_m)
\kern1em + \kern-2em
\sum_{\tau \in S_\NC^\delta(m, -m) \setminus (S_2 \cup S_3)} \kern-2.5em
 \kappa_{\tau/2}(a_1, \dots, a_m)  \\
& =
\sum_{\rho \in N_1} \partial \kappa_\rho(a_1, \dots, a_m)
+
\sum_{\tau \in S_1}  \kappa_{\tau/2}(a_1, \dots, a_m) 
\end{align*}
which is exactly the claim of Theorem \ref{thm:product rule}.

\section{First Step: Proposition \ref{prop:first}}
\label{sec:product formula first step}

Since in Proposition \ref{prop:first} below, we exclude the
case $\pi = 1_r$ we may use induction on $r$ and thus we may
use (\ref{eq:the product rule}):
\[
\kappa_s'(A_1, \dots, A_s)
= \kern-1em
\sum_{\pi \in \NC(m)} \partial \kappa_\pi(a_1, \dots, a_m)
+ \kern-1.2em
\sum_{\sigma \in S_\NC^\delta(m, -m)} \kern-1.2em \kappa_{\sigma/2}
(a_1, \dots, a_m),
\]
for $s < r$, provided that we start the induction by proving
(\ref{eq:the product rule}) when $s = 1$. However in this
case (\ref{eq:the product rule}) reduces to the moment
cumulant formula (\ref{eq:infinitesimal moment cumulant}),
but the $s = 1$ step is just the definition of real
infinitesimal cumulants. This clears the way to start our
induction. Let us recall the definition of $N_2$ and $S_2$:

\[
N_2 = \{ \pi \in \NC(m) \mid K(\pi)|_M \not= \textit{id}_M\} \mbox{\ and}
\]
\begin{multline*}
S_2 = \{ \sigma \in S_\NC^\delta(m, -m) \mid
K^\delta(\sigma)|_{\pm M} \not= \id_{\pm M} \mbox{\ but\ }
\\ K^\delta(\sigma)|_{\pm M} \mbox{\ has no through
  cycles\ }\}.
\end{multline*}

\begin{proposition}\label{prop:first}
\[
\mathop{\sum_{\pi \in\NC(r)}}_{\pi \not= 1_r}
\partial\kappa_\pi(A_1, \dots, A_r)
=
\sum_{\rho \in N_2} \partial \kappa_\rho(a_1, \dots, a_m)
+
\sum_{\tau \in S_2} \kappa_{\tau/2}(a_1, \dots, a_m)
\]
\end{proposition}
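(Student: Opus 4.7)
The plan is to proceed by induction on $r$, with the base case $r=1$ already handled in Section \ref{sec:product rule r = 1} (where \eqref{eq:the product rule} reduces to the moment-cumulant formula \eqref{eq:infinitesimal moment cumulant}). For the inductive step, fix $\pi \in \NC(r)$ with $\pi \neq 1_r$ and write
\[
\partial \kappa_\pi(A_1, \dots, A_r) = \sum_{V_0 \in \pi} \kappa_{|V_0|}'(A_1, \dots, A_r \mid V_0) \prod_{W \in \pi,\, W \neq V_0} \kappa_{|W|}(A_1, \dots, A_r \mid W).
\]
Since $\pi \neq 1_r$ forces $|V_0| < r$, the inductive hypothesis applies to the primed factor and expresses it as a sum of two pieces: a non-annular sum over non-crossing partitions $\rho_{V_0}$ of the indices in $V_{0,\vm}$ whose join with the inherited interval structure is the full block, plus an annular sum over $\sigma_{V_0} \in S_\NC^\delta(V_{0,\vm}, -V_{0,\vm})$ whose Kreweras complement $K^\delta(\sigma_{V_0})$ separates the endpoints of the intervals comprising $V_0$. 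Each unprimed factor is expanded via Lemma \ref{lemma:zeroth order product} as a sum over $\rho_W \in \NC(W_\vm)$ with $\rho_W \vee \gamma_\vm|_{W_\vm} = 1_{W_\vm}$.

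For the non-annular piece, assembling $\rho_{V_0}$ with the $\rho_W$'s yields a global $\rho \in \NC(m)$ carrying a distinguished block inherited from $\partial \kappa_{\rho_{V_0}}$. The constraints on the sub-partitions are jointly equivalent to $\rho \vee \gamma_\vm = \pi_\vm$, and because every block of $\rho$ sits inside a unique $V_{0,\vm}$, this assembly defines a bijection between triples $(\pi \neq 1_r,\, V_0,\, \rho_{V_0},\, \{\rho_W\})$ with a marked block and pairs $(\rho,\, \text{marked block})$ where $\rho \vee \gamma_\vm \neq 1_m$. Summing reproduces $\sum_{\rho \in N_2} \partial \kappa_\rho(a_1, \dots, a_m)$ exactly.

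For the annular piece, the analogous assembly produces a global $\tau \in S_\NC^\delta(m, -m)$ whose through cycles are precisely those of $\sigma_{V_0}$ and therefore contained in $V_{0,\vm} \cup \delta(V_{0,\vm})$, while the remaining cycles are supplied by the non-through cycles of $\sigma_{V_0}$ and by the $\rho_W$'s (viewed as disk permutations paired with their $\delta$-conjugates). The separation conditions on $K^\delta(\sigma_{V_0})$ and on the $K(\rho_W)$'s translate into the statement that $K^\delta(\tau)|_{\pm M}$ has no through cycles, while $\pi \neq 1_r$ (i.e.\ $V_{0,\vm} \subsetneq [m]$) ensures $K^\delta(\tau)|_{\pm M} \neq \id_{\pm M}$, placing $\tau$ in $S_2$. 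Conversely, any $\tau \in S_2$ decomposes uniquely: by Remark \ref{remark:conjugate pairs} its through cycles come in $\delta$-conjugate pairs whose union pins down a unique block $V_0$ of a unique $\pi \in \NC(r) \setminus \{1_r\}$ via Lemma \ref{lemma:annular reduction}, and the restrictions of $\tau$ to the remaining intervals $W_\vm$ recover the $\rho_W$'s.

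The main obstacle is the annular bijection, in particular verifying that the local separation conditions on $K^\delta(\sigma_{V_0})$ and on each $K(\rho_W)$ are precisely equivalent, after assembly, to the two-sided condition defining $S_2$: that $K^\delta(\tau)|_{\pm M}$ has no through cycles but is not the identity. This amounts to a careful Kreweras-complement calculation on the annulus, reconciling the symmetry under $\delta$ with the interval decomposition inherited from $\pi$, and verifying that a hypothetical through cycle of $K^\delta(\tau)|_{\pm M}$ would force the through cycles of $\tau$ to leave $V_{0,\vm}$ --- contradicting the fact that $\sigma_{V_0}$ lives entirely inside $V_{0,\vm}$.
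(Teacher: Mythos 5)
Your proposal follows the same overall strategy as the paper's proof: induct on $r$, expand $\partial\kappa_\pi(A_1,\dots,A_r)$ by marking a block $V_0$, apply the induction hypothesis to the $\kappa'$-factor and Lemma~\ref{lemma:zeroth order product} to the others, then reassemble the local data into a global $\rho\in N_2$ or $\tau\in S_2$. The non-annular half is essentially complete modulo notation (the bijection $N_2=\tilde N_2$ is the content of Lemma~\ref{lemma:first bijection}, which you reproduce in spirit via the equivalence $K(\rho)|_M\neq\id_M\iff\rho\vee\gamma_\vm\neq 1_m$).

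The gap, which you partly acknowledge, is in the annular half, and it is not merely a matter of ``a careful Kreweras-complement calculation.'' Two specific points. First, for the forward direction you assert that a through cycle of $K^\delta(\tau)|_{\pm M}$ would force a through cycle of $\tau$ to escape $V_{0,\vm}\cup\delta(V_{0,\vm})$; there is no direct implication of this form, and the paper instead proves (Lemma~\ref{lemma:annular restrictions}) the identity $K^\delta(\tau)|_{\pm M}=K^\delta(\pi_\vm)|_{\pm M}$ under the assembly condition $(0_\tau,\tau)\leq(\cU_\sV,\delta\pi_\vm^{-1}\delta\pi_\vm)$, from which the absence of through cycles in $K^\delta(\tau)|_{\pm M}$ follows because $\pi_\vm$ acts trivially on $[-m]$. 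Second, for the converse direction you invoke Lemma~\ref{lemma:annular reduction} to pin down $V_0$ and $\pi$, but that lemma decomposes $S_\NC^\delta(m,-m)$ over $\pi'\in\NC(m)$ with a marked block of $\pi'$ --- it does not by itself produce a $\pi\in\NC(r)$ and a block $V_0\in\pi$ compatible with the interval structure $\gamma_\vm$. To get that, the paper's Lemma~\ref{lemma:second bijection} must first use the ``no through cycles in $K^\delta(\tau)|_{\pm M}$'' hypothesis to conclude that $K^\delta(\tau)$ leaves $M$ invariant, then recover $\pi$ by inverting the Kreweras complement of $K^\delta(\tau)|_M$ inside $M$, and finally establish the cardinality relations $|\tau|+|K^\delta_{\pi_\vm}(\tau)|=|\delta\pi_\vm^{-1}\delta\pi_\vm|+2$ and $\#(\cU_\sV)=2\#(\pi_\vm)-1$ to show $(0_\tau,\tau)\leq(\cU_\sV,\delta\pi_\vm^{-1}\delta\pi_\vm)$. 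None of this is a formal consequence of the lemmas you cite, so the annular bijection remains unproved.
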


The idea is that for each $\rho \in N_2$ we can associate a
unique $\pi(\rho) \in \NC(r) \setminus \{1_r\}$ and for each
$\tau \in S_2$ we can associate a unique $\pi(\tau) \in
\NC(r) \setminus \{1_r\}$ such that
\begin{equation}\label{eq:two part strategy}
\partial\kappa_\pi(A_1, \dots, A_r)
=
\mathop{\sum_{\rho \in N_2}}_{\pi(\rho) = \pi}
 \partial \kappa_\rho(a_1, \dots, a_m)
+
\mathop{\sum_{\tau \in S_2}}
_{\pi(\tau) = \pi}
\kappa_{\tau/2}(a_1, \dots, a_m).
\end{equation}

For $\pi \in \NC(r)$, recall that $K(\pi) =
\pi^{-1}\gamma_r$, and for $\rho \in \NC(m)$, $K(\rho) =
\rho^{-1}\gamma_m$, and $\psi: [r] \rightarrow M$ is given
by $\psi(k) = m_1 + \cdots + m_k$.

\begin{lemma}\label{lemma:first restriction}
For $\rho \leq \pi_\vm$ with $\rho \vee \gamma_\vm = \pi_\vm$, we have ${K(\rho)}|_\sM \psi = \psi K(\pi)$. 
\end{lemma}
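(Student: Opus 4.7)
My plan is to establish the identity $K(\rho)|_\sM \circ \psi = \psi \circ K(\pi)$ in two moves. First, I will show that $K(\rho)|_\sM = K(\pi_\vm)|_\sM$ as permutations of $M$, so that $K(\rho)|_\sM$ depends on $\rho$ only through the coarser partition $\pi_\vm$. Second, I will compute $K(\pi_\vm)(\psi(k))$ directly from the cycle structure of $\pi_\vm$, observe that the image already lies in $M$, and check that it equals $\psi(K(\pi)(k))$.

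For the first move, I will invoke the fact that the Kreweras complement is an antiautomorphism of the lattice $\NC(m)$, so the hypothesis $\rho \vee \gamma_\vm = \pi_\vm$ translates to $K(\pi_\vm) = K(\rho) \wedge K(\gamma_\vm)$. A direct calculation of $K(\gamma_\vm) = \gamma_\vm^{-1}\gamma_m$ shows that this partition has $M = \{\psi(1), \ldots, \psi(r)\}$ as a single block and every $a \in [m] \setminus M$ as a singleton: if $a$ is not the last element of its interval $I_l$ then $\gamma_m(a) = a+1 \in I_l$ and $\gamma_\vm^{-1}(a+1) = a$, while if $a = \psi(l)$ then $\gamma_m(a)$ is the first element of $I_{l'}$ with $l' = (l \bmod r)+1$, which $\gamma_\vm^{-1}$ sends to $\psi(l')$. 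Taking the meet with such a partition leaves every $a \notin M$ as a singleton and replaces each block $B$ of $K(\rho)$ inside $M$ by $B \cap M$. Because the restriction of an NC partition to any subset is again non-crossing in the inherited cyclic order, this set-partition meet is already non-crossing and therefore coincides with the NC meet $K(\pi_\vm)$. Consequently $K(\rho)|_\sM$ and $K(\pi_\vm)|_\sM$, viewed as first-return permutations of $M$, have identical cycles, namely the nonempty sets $B \cap M$ with their cyclic order inherited from $[m]$, and hence agree.

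For the second move, fix $k \in [r]$, set $k' = (k \bmod r)+1$, and let $W = \{j_1 < \cdots < j_s\}$ be the block of $\pi$ containing $k'$, say $k' = j_a$. Then $\psi(k)+1$ (read mod $m$) is the smallest element of the interval $I_{k'} \subseteq W_\vm$, and $\pi_\vm^{-1}(\psi(k)+1)$ is the element of $W_\vm$ immediately preceding it in the cyclic order of the $\pi_\vm$-cycle on $W_\vm$. Because the intervals $I_{j_1}, \ldots, I_{j_s}$ appear inside $W_\vm$ in the same cyclic order as $j_1 < \cdots < j_s$ in $W$, this preceding element is the last point of $I_{j_{a-1}}$, namely $\psi(j_{a-1})$, with indices read mod $s$. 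On the other hand $K(\pi)(k) = \pi^{-1}(k') = \pi^{-1}(j_a) = j_{a-1}$ since $\pi$ acts on $W$ as the cycle $(j_1, \ldots, j_s)$. Thus $K(\pi_\vm)(\psi(k)) = \psi(j_{a-1}) = \psi(K(\pi)(k))$, which lies in $M$, so combining with the first move yields $K(\rho)|_\sM(\psi(k)) = \psi(K(\pi)(k))$.

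The main bookkeeping challenge is handling the two cyclic wrap-arounds simultaneously, namely $k = r$ (where $\psi(r)+1 \equiv 1 \pmod m$) and $a = 1$ (where $j_{a-1}$ must be read as $j_s$); both are resolved by reading all indices cyclically, but they must be tracked consistently throughout the argument.
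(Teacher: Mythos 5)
Your proof is correct, and it follows the same two-step decomposition as the paper's: reduce the identity to $K(\rho)|_\sM = K(\pi_\vm)|_\sM$, and separately establish $K(\pi_\vm)|_\sM = \psi K(\pi)\psi^{-1}$. Where you differ is in the tool used for the first step. The paper works at the level of permutations: it factors $K(\rho) = (\rho^{-1}\pi_\vm)\,K(\pi_\vm)$, notes that $K(\pi_\vm)$ acts trivially off $M$, invokes a lemma of Mingo--Speicher--Tan on first-return maps of such products to get $K(\rho)|_\sM = (\rho^{-1}\pi_\vm)|_\sM\, K(\pi_\vm)|_\sM$, and then kills the first factor using the hypothesis in the form ``$\rho^{-1}\pi_\vm$ separates $M$'' (the equivalence with $\rho\vee\gamma_\vm=\pi_\vm$ being the paper's Lemma 8.1). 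You instead work at the level of the lattice $\NC(m)$: the Kreweras anti-automorphism turns the join hypothesis into $K(\pi_\vm)=K(\rho)\wedge K(\gamma_\vm)$, and the explicit computation of $K(\gamma_\vm)$ (one block $M$, singletons elsewhere) makes the meet transparent. Your route is more self-contained --- it avoids both external lemmas from \cite{mst} --- at the cost of two small points you pass over quickly: (a) the join $\rho\vee\gamma_\vm$ must be read in $\NC(m)$ for the anti-automorphism to apply, which is harmless here since $\gamma_\vm$ is an interval partition and the $\NC$- and set-partition joins coincide; and (b) knowing that $K(\rho)|_\sM$ and $K(\pi_\vm)|_\sM$ have the same orbits $B\cap M$ only identifies them as partitions of $M$, and to conclude they agree as permutations you need that both underlying permutations traverse their cycles in increasing cyclic order --- true because both are Kreweras complements of non-crossing partitions, but worth stating, since it is exactly the content your phrase ``cyclic order inherited from $[m]$'' is carrying. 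Finally, your second move supplies an explicit verification of $K(\pi_\vm)|_\sM=\psi K(\pi)\psi^{-1}$, which the paper simply asserts; that computation is correct, including the handling of the two cyclic wrap-arounds.
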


\begin{proof}
Since $K(\pi_\vm)|_{\sM} = \psi K(\pi) \psi^{-1}$ we just
have to show that $K(\rho)|_\sM = K(\pi_\vm)|_\sM$. However
we have $K(\rho) = \rho^{-1}\pi_\vm \, K(\pi_\vm)$ and
$\pi_\vm^{-1}\gamma_m$ acts trivially on $M^c$. We have that
$K(\rho)|_\sM = (\rho^{-1}\pi_\vm)|_\sM \cdot
K(\pi_\vm)|_\sM$, by \cite[Lemma 6]{mst} . By assumption
$\rho^{-1}\pi_\vm|_\sM = \id_\sM$, so we have $K(\rho)|_\sM
= K(\pi_\vm)|_\sM$.
\end{proof}

\begin{figure}
\setbox1=\hbox{\includegraphics{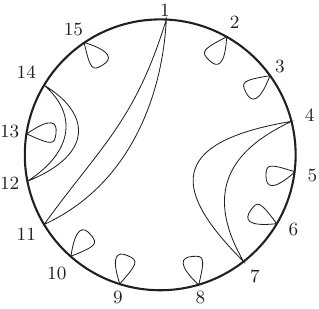}}
\setbox2=\hbox{\includegraphics{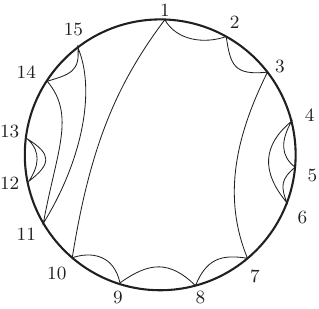}}
\setbox3=\hbox{\includegraphics{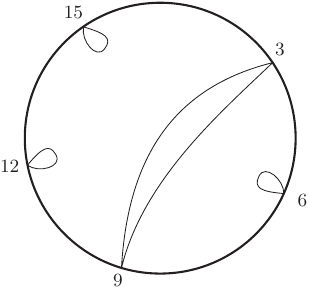}}
\setbox4=\hbox{\includegraphics{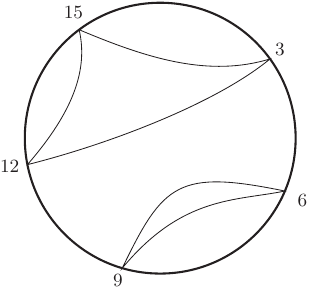}}
\setbox5=\hbox{\includegraphics{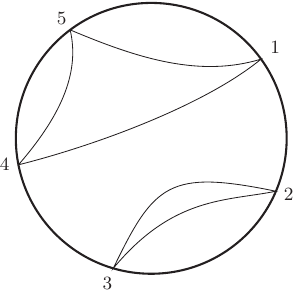}}
\setbox6=\hbox{\includegraphics{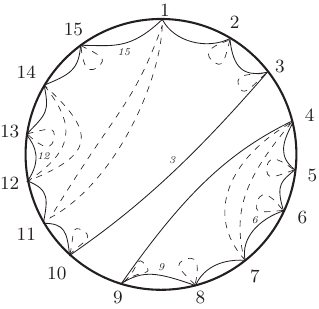}}

\leavevmode
\vbox{\raggedright\hsize\wd1\box1
{\tiny 
\begin{center}$\rho = (1,11)(2)(3)(4, 7)(5)(6)(8)(9)\ab(10)(12, 14)(13)(15)$\end{center}}} \hfill
\vbox{\raggedright\hsize\wd2\box2
{\tiny 
\begin{center}$K(\rho) = (1,2,3,7,8,9,10)(4, 5,6)\ab(11,14,15)(12, 13)$\end{center}}}

\leavevmode
\vbox{\raggedright\hsize\wd3\box3
{\tiny 
\begin{center}$K(\rho)|_M = (3,9)(6)(12)(15)$\end{center}}} 
\hfill
\vbox{\raggedright\hsize\wd4\box4
{\tiny 
\begin{center}$K^{-1}[K(\rho)|_M] = (3,12,15)(6,9)$\end{center}}}

\leavevmode
\vbox{\raggedright\hsize\wd5\box5
{\tiny 
\begin{center}$\pi = \psi^{-1}(K(\rho)|_M)\psi$\end{center}}}
\hfill
\vbox{\raggedright\hsize\wd6\box6
{\tiny 
\begin{center}$\pi_\vm = (1,2,3,10,11,12,13,14,15)\ab(4,5,6,7,8,9)$\end{center}}}

\caption{\label{fig:first term}\small The six steps in Lemma
  \ref{lemma:first bijection}. We are given $\rho$ in the
  upper left; its Kreweras complement in the upper right;
  the restriction to $M$ in the centre left; the inverse
  Kreweras complement of the restriction of the Kreweras
  complement in the centre right; $\pi$ in the lower left,
  and $\pi_\vm$ in the lower right, with $\rho$ in dashed
  lines. Note that $\rho^{-1}\pi_\vm$ separates the points
  of $M$ (\textit{marked} {\tiny\textit{3, 6, 9, 12, 15})}.}
\end{figure}

\begin{lemma}\label{lemma:first bijection}
Let 
\begin{multline}
\tilde{N}_2 =
\{ \rho \in \NC(m) \mid \exists\,
1_r \not =\pi \in \NC(r) \mbox{\ such that\ } \\
\rho \leq \pi_\vm 
\mbox{\ and\ }
\rho^{-1}\pi_\vm \sep M\}.
\end{multline}
Then $N_2 = \tilde{N}_2$.
\end{lemma}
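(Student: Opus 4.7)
The plan is to establish both inclusions using the two structural lemmas already available: the lemma of \cite{mst} (stated immediately before Lemma \ref{lemma:zeroth order product}) which asserts $\rho \vee \gamma_\vm = \pi_\vm$ iff $\rho^{-1}\pi_\vm$ separates the points of $M$, together with Lemma \ref{lemma:first restriction}, which says that whenever $\rho \leq \pi_\vm$ and $\rho \vee \gamma_\vm = \pi_\vm$, one has $K(\rho)|_\sM \psi = \psi K(\pi)$. Both directions then reduce to the translation between the restriction $K(\rho)|_\sM$ (which controls membership in $N_2$) and the Kreweras complement $K(\pi)$ (which, for $\pi \in \NC(r)$, is the identity precisely when $\pi = 1_r$).

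For the inclusion $N_2 \subseteq \tilde{N}_2$, given $\rho \in \NC(m)$ with $K(\rho)|_\sM \neq \id_\sM$, I would take $\pi \in \NC(r)$ to be the unique non-crossing partition of $[r]$ with $\pi_\vm = \rho \vee \gamma_\vm$ (such a $\pi$ exists since every non-crossing partition of $[m]$ that is $\geq \gamma_\vm$ is of the form $\pi_\vm$). Then $\rho \leq \pi_\vm$ by construction, and the \cite{mst} lemma gives $\rho^{-1}\pi_\vm \sep M$. The only remaining point is to verify $\pi \neq 1_r$: if $\pi = 1_r$, then $\pi_\vm$ consists of the single block $[m]$, so as a permutation $\pi_\vm = \gamma_m$, whence $\rho^{-1}\pi_\vm = \rho^{-1}\gamma_m = K(\rho)$. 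But then $\rho^{-1}\pi_\vm \sep M$ would force $K(\rho)|_\sM = \id_\sM$, contradicting $\rho \in N_2$.

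For the reverse inclusion $\tilde{N}_2 \subseteq N_2$, given $\rho \in \tilde{N}_2$ together with a witness $\pi \neq 1_r$ satisfying $\rho \leq \pi_\vm$ and $\rho^{-1}\pi_\vm \sep M$, the \cite{mst} lemma yields $\rho \vee \gamma_\vm = \pi_\vm$, so the hypotheses of Lemma \ref{lemma:first restriction} are met and $K(\rho)|_\sM = \psi K(\pi) \psi^{-1}$. Since $K(\pi) = \pi^{-1}\gamma_r$ equals $\id_r$ only for $\pi = 1_r$, and $\pi \neq 1_r$ by assumption, we conclude $K(\rho)|_\sM \neq \id_\sM$, hence $\rho \in N_2$. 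No step here should be seriously hard; the only delicate point is keeping straight that $1_r$ (as a partition) corresponds to $\gamma_m$ (as a permutation) under the $\vm$-inflation, which is where the case analysis for $\pi = 1_r$ gets its bite.
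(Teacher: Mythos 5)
Your proof is correct. The inclusion $\tilde{N}_2 \subseteq N_2$ is exactly the paper's argument: apply Lemma \ref{lemma:first restriction} (after converting the separation hypothesis into $\rho \vee \gamma_\vm = \pi_\vm$ via the quoted \cite{mst} lemma) and use $\pi \neq 1_r \Rightarrow K(\pi) \neq \id$. For the converse inclusion you take a genuinely different and more economical route. The paper builds $\pi$ from the other end: it sets $\tilde\pi = \gamma_\sM (K(\rho)|_\sM)^{-1}$, the inverse Kreweras complement of the restriction of $K(\rho)$ to $M$, and then has to verify by hand — via two applications of \cite[Lemma 6]{mst} and an argument that $K(\pi_\vm) \leq K(\rho)$ — that $\rho \leq \pi_\vm$ and $(\rho^{-1}\pi_\vm)|_\sM = \id_\sM$. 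You instead define $\pi$ by $\pi_\vm = \rho \vee \gamma_\vm$ (legitimate, since the join is non-crossing and refines into the intervals $I_k$, as the paper's own remark ``If $\pi \in \NC(r)$ then $\pi_\vm \in \NC(m)$, and conversely'' supports), which hands you $\rho \leq \pi_\vm$ and the separation condition simultaneously from the join--separation equivalence; your contradiction argument for $\pi \neq 1_r$ is also sound, resting correctly on the fact that $1_m$ viewed as a permutation is $\gamma_m$. The two constructions produce the same $\pi$, since both force $K(\pi_\vm)|_\sM = K(\rho)|_\sM$. The one thing the paper's explicit formula buys, and your version leaves implicit, is the uniqueness of the witness $\pi$, which is invoked later when the sum over $\NC(r)\setminus\{1_r\}$ is decomposed; uniqueness is not part of the lemma's statement, and in any case it follows from your construction too (any witness must satisfy $\pi_\vm = \rho \vee \gamma_\vm$ by the same \cite{mst} equivalence), so nothing is lost.
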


Before reading the proof one should inspect the example in
Figure \ref{fig:first term}.

\begin{proof}
Let $\rho \in \tilde{N}_2$. By Lemma \ref{lemma:first
  restriction}, $K(\rho)|_\sM \psi = \psi K(\pi)$. By
assumption $\pi \not = 1_r$ so $K(\pi) \not= 0_r$. Hence as
a permutation $K(\pi)$ is not the identity, thus
$K(\rho)|_\sM \not= \id_\sM$.

Conversely, suppose $\rho \in N_2$. We obtain $\pi$ as
follows.  Start with $\rho$, take its Kreweras complement,
restrict to $M$, then take the inverse Kreweras
complement. This produces a permutation on $M$. Finally use
$\psi$ to turn this into a permutation on $[r]$.

Let $\gamma_\sM = (m_1, m_1 + m_2, \dots, m_1 + \cdots +
m_r) \in S(M)$ be the permutation with one cycle and let
$\tilde\pi$ be the inverse Kreweras complement of
$K(\rho)|_\sM$, namely $\tilde\pi = \gamma_\sM
(K(\rho)|_\sM)^{-1}$, and finally let $\pi \in S_r$ be given
by $\tilde\pi\psi = \psi\pi$. In \cite[Notation 4]{mst} it
was shown that $K(\rho)|_\sM \in \NC(M)$, thus $\pi \in
\NC(r)$, as $\psi$ is order preserving. As $K(\pi_\vm)$ acts
trivially on $M^c$, we have $K(\pi_\vm)|_\sM = \psi K(\pi)
\psi^{-1}$ and so $K(\rho)|_\sM = K(\pi_\vm)|_\sM$.

Note that 
\[
K(\rho)|_\sM = (\rho^{-1} \gamma_m )|_\sM = 
(\rho^{-1} \pi_\vm \pi_\vm^{-1} \gamma_m)|_\sM
=
(\rho^{-1} \pi_\vm K(\pi_\vm)|_\sM
\]
Again as $K(\pi_\vm)$ acts trivially on $M^c$, we have by
\cite[Lemma 6]{mst} that
\[
(\rho^{-1} \pi_\vm K(\pi_\vm)|_\sM
=
(\rho^{-1} \pi_\vm)|_\sM (K(\pi_\vm)|_\sM.
\]
Thus $K(\rho)|_\sM = (\rho^{-1}\pi_\vm)|_\sM K(\pi_\vm
)|_\sM = (\rho^{-1}\pi_\vm)|_\sM K(\rho )|_\sM$. By
cancelling $K(\rho)|_\sM$ we have $\rho^{-1}\pi_\vm|_\sM =
\id_\sM$.

Finally note that the actions of $K(\rho)$ and $K(\pi_\vm )$
on $M$ are the same and $K(\pi_\vm)$ acts trivially on
$M^c$, so $K(\pi_\vm) \leq K(\rho)$. Hence $\rho \leq
\pi_\vm$. This shows that $\rho \in N_2$. In addition, our
formula for $\pi$ shows that $\pi$ is unique.
\end{proof}

Now we can explain our proof strategy for Equation
(\ref{eq:two part strategy}). First fix $\pi \in \NC(r)$
with $\pi \not= 1_r$. For $V \in \pi$, $V_\vm \subseteq [m]$
is defined in \S\ref{section:product formula}.
\[
\partial\kappa_\pi(A_1, \dots, A_r) = \sum_{V \in \pi} 
\Big\{
\kappa_{|V|}'(A_1, \dots, A_r \mid V)
\prod_{W \not=V} \kappa_{|W|}(A_1, \dots, A_r | W) \Big\} 
\]
\[
 \stackrel{(*)}{=}
\sum_{V \in \pi} \Big\{
\kappa_{|V|}'(A_1, \dots, A_r \mid V)
\prod_{W \not=V}
\mathop{\sum_{\rho \in \NC(W_{\vv m})}}_{K(\rho) \sep M \cap W_{\vv m}}
\kern-1em
\kappa_\rho(a_1, \dots, a_m \mid W_{\vv m}) 
\]
\begin{multline}\label{eq:parts one and two}
\stackrel{(**)}{=}
\sum_{V \in \pi} \Bigg\{
\prod_{W \not=V} \kern-1em
\mathop{\sum_{\rho \in \NC(W_{\vv m})}}_{K(\rho) \sep M \cap W_{\vv m}}
\kern-1em
\kappa_\rho(a_1, \dots, a_m \mid W_{\vv m}) \\
\mbox{} \times
\bigg\{ \kern-1em
\mathop{\sum_{\rho \in \NC(V_{\vv m})}}_{K(\rho) \sep M \cap V_{\vv m}}
\partial \kappa_\rho(a_1, \dots, a_m \mid V_{\vv m})   \\
\mbox{} + \kern-2em
\mathop{\sum_{\tau \in S_\NC^\delta(V_{\vv m}, -V_{\vv m})}}
_{K^\delta(\tau) \sep \pm (M \cap V_{\vv m})}
\kappa_{\tau/ 2}(a_1, \dots, a_m | V_{\vv m}) \bigg\}\Bigg\}
\end{multline}
where $(*)$ holds by the product formula in the disc and
$(**)$ holds by the induction hypothesis. Our strategy is
now to split this last line into two parts; then prove the
first part equals $\mathop{\sum\limits_{\rho \in
    N_2}}\limits_{\pi(\rho) = \pi} \partial \kappa_\rho(a_1,
\dots, a_m)$ and the second equals
$\mathop{\sum\limits_{\tau \in S_2}}\limits _{\pi(\tau) =
  \pi} \kappa_{\tau/2}(a_1, \dots, a_m)$.

\subsection{\normalsize The First Part of Equation (\ref{eq:parts one and two})}

We break the last expression into two parts and consider the first part. 
\[
\sum_{V \in \pi}\Bigg\{
\prod_{W \not=V} \kern-1.0em
\mathop{\sum_{\rho \in \NC(W_{\vv m})}}_{K(\rho) \sep M \cap W_{\vv m}}
\kern-2em
\kappa_\rho(a_1, \dots, a_m \mid W_{\vv m})
\Bigg\}  \kern-1.5em
\mathop{\sum_{\rho \in \NC(V_{\vv m})}}_{K(\rho) \sep M \cap V_{\vv m}} \kern-1.5em
\partial \kappa_\rho(a_1, \dots, a_m \mid V_{\vv m})
\]
\[
\mbox{} \stackrel{(***)}{=} \kern-1.5em
\mathop{\sum_{\rho \in \NC(m),\, \rho \leq \pi_\vm}}_{\rho^{-1}\pi_\vm \sep M} \kern-1.5em
\partial \kappa_\rho(a_1, \dots, a_m)
\]
where in $(*\!*\!*)$ we combined all the $\rho$'s into a
single $\rho$. To justify this notice that when we have for
each $W \in \pi$, a $\rho_\sW \in \NC(W_{\vv m})$ such that
$\rho_\sW \vee \gamma_{\sW_{\vv m}} = 1_{\sM \cap \sW_{\vv
    m}}$, we get that $\prod_{W \in \pi} \rho_\sW^{-1}
\gamma_{\sW_{\vv m}}$ separates the points of $M$. Then
$\rho^{-1} \pi_{\vv m} = \prod_{\sW \in \pi} \rho_\sW^{-1}
\gamma_{\sW_{\vv m}}$ separates the points of $M$.  Thus by
Lemma \ref{lemma:first bijection}, this first part equals
\[
\sum_{\rho \in N_2} \partial \kappa_\rho(a_1, \dots, a_m).
\]

\subsection{\normalsize The Second Part of Equation (\ref{eq:parts one and two})}

Now let us consider the second part of the expression
(\ref{eq:parts one and two}) above (\textit{recall that $\pi
  \not = 1_r$ has been fixed}):
\begin{equation}\label{eq:second term}
\sum_{V \in \pi}
\prod_{W \not=V} \kern-1.0em
\mathop{\sum_{\rho \in \NC(W_{\vv m})}}_{K(\rho) \sep M \cap W_{\vv m}}
\kern-2em
\kappa_\rho(a_1, \dots, a_m \mid W_{\vv m}) \kern-2em
\mathop{\sum_{\tau \in S_\NC^\delta(V_{\vv m}, -V_{\vv m})}}
_{K^\delta(\tau) \sep \pm(M \cap V_\vm)} 
\kern-2em
\kappa_{\tau}(a_1, \dots, a_m | V_{\vv m}). 
\end{equation}
We will show that this equals $\mathop{\sum\limits_{\tau \in S_2}}\limits
_{\pi(\tau) = \pi} \kappa_{\tau/2}(a_1, \dots, a_m)$. 

We need to combine all the $\rho$'s for each $W$ and the
$\tau$ for $V$ to obtain a single $\tau$ as follows. If
$\rho_\sW \in \NC(W_\vm)$, then we double this to obtain
$\delta\rho_\sW^{-1}\delta \rho_\sW \in \NC(W_\vm) \times
\NC(-W_\vm)$. For the $\tau_\sV \in S_\NC^\delta(V_\vm,
-V_\vm)$ we let it stand unchanged. The $\tau$ we want is
then the product of all these parts
\begin{equation}\label{eq:tau construction}
\tau = \tau_\sV \prod_{W \not= V} \delta \rho_\sW^{-1} \delta \rho_\sW.
\end{equation}
We want to show that the $\tau$'s obtained this way are such
that $K^\delta(\tau)$ does not separate the points of $\pm
M$ but does separate $M$ from $-M$. For this we shall need
some additional notation. As before we start with $\pi \in
\NC(r)$ and construct $\pi_\vm \in \NC(m)$. Then $\delta
\pi_\vm^{-1} \delta \pi_\vm \in \NC(m) \times \NC(-m)$. For
each $V \in \pi$ we get a block $V_\vm$ of $\pi_\vm$. If we
fix $V \in \pi$, we let $\cU_\sV$ be the partition of $[ \pm
  m]$ obtained from $\delta \pi_\vm^{-1} \delta \pi_\vm$ by
joining $V_\vm$ with $\delta(V_\vm)$.  Then each cycle of
$\pi_\vm$ is contained in a block of $\cU_\sV$, and thus
$(\cU_\sV, \pi_\vm)$ is a partitioned permutation in the
sense of \cite[\S1]{mst}, (\textit{see p.}~4754). The
crucial part is that now for the $\tau$ constructed above
(\ref{eq:tau construction}), $(0_\tau, \tau)$ is
non-crossing with respect to $(\cU_\sV, \delta\pi_\vm^{-1}
\delta \pi_\vm)$, or equivalently $(0_\tau, \tau) \leq
(\cU_\sV, \delta\pi_\vm^{-1} \delta \pi_\vm)$ in the sense
of \cite[Cor. 38 $(iii)$]{mst}.

Thus we may write the second term (\ref{eq:second term}) as
\[
\sum_{V \in \pi}
\prod_{W \not=V} \kern-1.0em
\mathop{\sum_{\rho \in \NC(W_{\vv m})}}_{K(\rho) \sep M \cap W_{\vv m}}
\kern-2em
\kappa_\rho(a_1, \dots, a_m \mid W_{\vv m}) \kern-2em
\mathop{\sum_{\tau \in S_\NC^\delta(V_{\vv m}, -V_{\vv m})}}
_{K^\delta(\tau) \sep \pm(M \cap V_\vm)} 
\kern-2em
\kappa_{\tau}(a_1, \dots, a_m | V_{\vv m})
\] 
\[
\mbox{}\stackrel{(*)}{=}
\sum_{V \in \pi} \kern-0.75em \mathop{\mathop{\sum_{\tau \in S_\NC^\delta(m, -m)}}_
{(0_\tau, \tau) \leq (\cU_\sV, \delta\pi_\vm^{-1} \delta \pi_\vm)}}_
{K^\delta_{\pi_{\vv m}}(\tau) \sep \pm M}
\kappa_{\tau/2}(a_1, \dots, a_m),
\]
where $K^\delta_{\pi_\vm}(\tau) := \delta \pi_\vm^{-1}
\delta \tau^{-1} \pi_\vm$ is the \textit{relative} Kreweras
complement of $\tau$ with respect to $\pi_\vm$. The
justification for this last equality $(*)$ is the same as
above: all of the separation conditions are local; so when
we put the cycles together to form $\tau$ we get that
$K^\delta_{\pi_\vm}(\tau)$ \textit{still} separates the
points of $M$. Since the non-crossing condition is
characterized by a metric property, \cite[Notation 4]{mst},
non-crossing on each piece, plus that the pieces don't cross
give us that $\tau \in S_\NC^\delta(m, -m)$. See Figure
\ref{fig:two} for a simple example.

\medskip\noindent
Let
\begin{multline*}
\tilde{S}_2 = 
\{ \tau \in S_\NC^\delta(m, -m) \mid \exists!\,
\pi \in \NC(r) \mbox{\ and\ } V \in \pi
\mbox{\ such that\ } \pi \not = 1_r,\\
(0_\tau, \tau) \leq (\cU_\sV, \delta\pi_\vm^{-1} \delta \pi_\vm),  
 \mbox{\ and\ } 
K_{\pi_\vm}^\delta(\tau) \sep \pm M\}.
\end{multline*}

First we have the annular version of Lemma \ref{lemma:first
  restriction}.

\begin{lemma}\label{lemma:annular restrictions}
Suppose that $\tau \in S_\NC^\delta(m, -m)$, $\pi \in
\NC(r)$, $V \in \pi$ with $(0_\tau, \tau) \leq (\cU_\sV,
\pi_\vm)$ and $K_{\pi_\vm}^\delta(\tau)|_{\pm \sM} =
\id_{\pm \sM}$. Then \[ K^\delta(\tau)|_{\pm \sM} = \ab
K^\delta(\pi_\vm) |_{\pm \sM}.\]
\end{lemma}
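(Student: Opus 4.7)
The plan is to derive an algebraic identity relating $K^\delta(\tau)$, $K^\delta(\pi_\vm)$, and $K^\delta_{\pi_\vm}(\tau)$, then use the separation hypothesis on $K^\delta_{\pi_\vm}(\tau)$ to match the first-return maps on $\pm M$. First, solving the defining identity $K^\delta_{\pi_\vm}(\tau) = \delta\pi_\vm^{-1}\delta\tau^{-1}\pi_\vm$ for $\tau^{-1} = \delta\pi_\vm\delta\,K^\delta_{\pi_\vm}(\tau)\,\pi_\vm^{-1}$ and substituting into $K^\delta(\tau) = \delta\gamma_m^{-1}\delta\tau^{-1}\gamma_m$, a short cancellation (using $\delta^2 = e$ and $\pi_\vm\pi_\vm^{-1} = e$) yields
\[
K^\delta(\tau) \;=\; L\cdot K^\delta_{\pi_\vm}(\tau)\cdot K(\pi_\vm), \qquad L := \delta\gamma_m^{-1}\pi_\vm\delta,
\]
where $L$ is supported on $[-m]$ (acting there as the conjugate of $\gamma_m^{-1}\pi_\vm$ by $\delta$) and $K(\pi_\vm) = \pi_\vm^{-1}\gamma_m$ is supported on $[m]$. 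The analogous factorization $K^\delta(\pi_\vm) = L_0\cdot K(\pi_\vm) = K(\pi_\vm)\cdot L_0$, with $L_0 := \delta\gamma_m^{-1}\delta$ supported on $[-m]$, follows from disjoint-support commutation; in particular $K^\delta(\pi_\vm)$ preserves the decomposition $[\pm m] = [m]\sqcup[-m]$. Note that $L = L_0 \cdot \delta\pi_\vm\delta$, so the two ``left factors'' differ by the action of $\pi_\vm$ transported to $[-m]$ by $\delta$.

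Next, the hypothesis $K^\delta_{\pi_\vm}(\tau)|_{\pm M} = \id_{\pm M}$ asserts that no cycle of $K^\delta_{\pi_\vm}(\tau)$ contains more than one point of $\pm M$. Combined with the containment $(0_\tau, \tau) \leq (\cU_V, \delta\pi_\vm^{-1}\delta\pi_\vm)$, which forces every cycle of $K^\delta_{\pi_\vm}(\tau)$ to lie inside a single block of $\cU_V$, this localizes the separation to each disc block $W_\vm$ (resp.\ $\delta W_\vm$) for $W \in \pi \setminus \{V\}$ and to the annular block $V_\vm \cup \delta V_\vm$. The central step is then, for each $x \in \pm M$, to trace the first-return map of the composite permutation $K^\delta(\tau) = L\cdot K^\delta_{\pi_\vm}(\tau)\cdot K(\pi_\vm)$ and to verify that each ``excursion'' through $K^\delta_{\pi_\vm}(\tau)$ stays inside one of its cycles, which by the separation hypothesis meets $\pm M$ only at its starting point. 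The ``correction'' from $L_0$ to $L$ is exactly the one produced by such excursions, so the first return to $\pm M$ under $K^\delta(\tau)$ coincides with that under $K^\delta(\pi_\vm) = L_0\cdot K(\pi_\vm)$.

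The main obstacle lies in the annular block $V_\vm \cup \delta V_\vm$, where cycles of $K^\delta_{\pi_\vm}(\tau)$ may be through cycles in the sense of Definition~\ref{def:all through cycles}, mixing $[m]$ and $[-m]$. Thus a trajectory starting in $M \cap V_\vm$ may pass through $[-m]$ before returning to $M$, and one must check that it does not visit $-M$ en route (and symmetrically for $x \in -M \cap \delta V_\vm$). This relies on the symmetry property from Remark~\ref{remark:conjugate pairs} — cycles of any $\sigma \in S_\NC^\delta$ come in conjugate pairs $c$ and $\delta c^{-1}\delta$ — combined with the local separation of $\pm M \cap (V_\vm \cup \delta V_\vm)$ furnished by the hypothesis. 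Once verified block by block and reassembled using the partitioned-permutation structure (as in \cite[Cor.~38]{mst}), the global equality $K^\delta(\tau)|_{\pm M} = K^\delta(\pi_\vm)|_{\pm M}$ follows.
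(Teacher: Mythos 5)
Your opening factorization is correct and is in substance the paper's own computation: writing $K^\delta_{\pi_\vm}(\tau)=\delta\pi_\vm^{-1}\delta\tau^{-1}\pi_\vm$ and substituting gives $K^\delta(\tau)=L\cdot K^\delta_{\pi_\vm}(\tau)\cdot K(\pi_\vm)$ with $L=\delta\gamma_m^{-1}\pi_\vm\delta$, and both $L$ and $K(\pi_\vm)$ fix $[\pm m]\setminus(\pm M)$ pointwise. But from there the proposal goes wrong in two ways. First, you have misread the right-hand side of the lemma: here $K^\delta(\pi_\vm)$ does not mean the literal $\delta\gamma_m^{-1}\delta\,\pi_\vm^{-1}\gamma_m=L_0\cdot K(\pi_\vm)$ (note $\pi_\vm\notin S_\NC^\delta(m,-m)$, so the defining formula does not apply to it verbatim); it is shorthand for $K^\delta(\delta\pi_\vm^{-1}\delta\pi_\vm)=\delta\gamma_m^{-1}\pi_\vm\delta\cdot\pi_\vm^{-1}\gamma_m=L\cdot K(\pi_\vm)$, as the way the lemma is invoked inside Lemma \ref{lemma:second bijection} and the data in Figure \ref{fig:two} confirm. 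With your reading the statement is simply false: two applications of the first-return multiplicativity of \cite[Lemma 6]{mst} (once for the right factor $K(\pi_\vm)$, once for the left factor $L$, both trivial off $\pm M$) together with the hypothesis $K^\delta_{\pi_\vm}(\tau)|_{\pm M}=\id_{\pm M}$ give exactly $K^\delta(\tau)|_{\pm M}=L|_{\pm M}\,K(\pi_\vm)|_{\pm M}$, and $L|_{-M}\neq L_0|_{-M}$ in general: $L_0|_{-M}$ is always the $r$-cycle $-\psi(l)\mapsto-\psi(l-1)$ independently of $\pi$, whereas $L|_{-M}$ sends $-\psi(l)$ to $-\psi(\pi(l)-1)$, so the two agree only when $\pi$ is the partition into singletons. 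No bookkeeping of ``excursions'' can repair this, because the hypothesis already eliminates the middle factor exactly, leaving $L$, not $L_0$.

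Second, even granting the correct target, your central step is a plan rather than an argument: ``trace the first-return map,'' ``verify that each excursion stays inside one of its cycles,'' ``one must check that it does not visit $-M$ en route,'' ``once verified block by block.'' None of these checks is carried out, and neither the conjugate-pair symmetry of Remark \ref{remark:conjugate pairs} nor the partitioned-permutation order is needed. The whole proof is the two-line computation above:
\[
K^\delta(\tau)|_{\pm M}
= L|_{\pm M}\cdot K^\delta_{\pi_\vm}(\tau)|_{\pm M}\cdot K(\pi_\vm)|_{\pm M}
= L|_{\pm M}\cdot K(\pi_\vm)|_{\pm M}
= \big(L\,K(\pi_\vm)\big)\big|_{\pm M}
= K^\delta(\delta\pi_\vm^{-1}\delta\pi_\vm)\big|_{\pm M},
\]
where the first and third equalities use \cite[Lemma 6]{mst} together with the fact that $\gamma_m^{-1}\pi_\vm$ fixes $[m]\setminus M$ pointwise (so $L$ fixes $[\pm m]\setminus(\pm M)$) and likewise for $K(\pi_\vm)$. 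You had all the ingredients for this after your first paragraph; the detour through trajectory-tracing aimed at the wrong identity is where the proof breaks down.
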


\begin{proof}
First note that $\pi_\vm^{-1} \gamma_m|_{[m] \setminus M} =
\id_{[m] \setminus M}$. So by \cite[Lemma 6]{mst},
\begin{multline*}
\pi_\vm^{-1} \gamma_m |_{\pm \sM} = K^\delta_{\pi_\vm}(\tau) |_{\pm \sM} \pi_\vm^{-1} \gamma_m |_{\pm \sM} \\
\mbox{}= \delta \pi_\vm^{-1} \delta \tau^{-1} \pi_\vm \cdot \pi_\vm^{-1} \gamma_m |_{\pm \sM} =
\delta \pi_\vm^{-1} \delta \tau^{-1} \gamma_m|_{\pm \sM}.
\end{multline*}
Likewise, $\delta \gamma_m^{-1}\pi_\vm \delta|_{[-m]
  \setminus -M} = \id_{[-m] \setminus -M}$. So by
\cite[Lemma 6]{mst},
\[
\delta \gamma_m^{-1} \pi_\vm \delta|_{\pm\sM} \delta \pi_\vm^{-1} \delta \tau^{-1} \gamma_m|_{\pm \sM}
= \delta \gamma_m^{-1} \delta \tau^{-1} \gamma_m |_{\pm \sM} = K^\delta(\tau)|_{\pm \sM}.
\]
Combining these two identities we have 
\[
K^\delta(\pi_\vm)|_{\pm \sM} = \delta \gamma_m^{-1} \delta \tau \gamma_m |_{\pm \sM} = K^\delta(\tau)|_{\pm \sM}.
\]
\end{proof}
Second, we have the annular version of Lemma
\ref{lemma:first bijection}.

\begin{lemma}\label{lemma:second bijection}
$S_2 = \tilde{S}_2$.
\end{lemma}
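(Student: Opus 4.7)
The plan is to mirror the proof of Lemma \ref{lemma:first bijection}, replacing the disc-based Kreweras complement $K(\cdot)$ and restriction to $M$ by their annular, $\delta$-symmetric counterparts $K^\delta(\cdot)$ and restriction to $\pm M$. Lemma \ref{lemma:annular restrictions} will play the role that Lemma \ref{lemma:first restriction} played there, supplying the key identity $K^\delta(\tau)|_{\pm M} = K^\delta(\pi_\vm)|_{\pm M}$ under the hypotheses defining $\tilde{S}_2$. Uniqueness of the witnessing pair $(\pi, V)$ will come from the disjointness in Lemma \ref{lemma:annular reduction}, combined with an explicit recovery of $\pi$ from $K^\delta(\tau)|_{\pm M}$ alone.

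For the inclusion $\tilde{S}_2 \subseteq S_2$, I would take $\tau \in \tilde{S}_2$ with witnessing pair $(\pi, V)$ and apply Lemma \ref{lemma:annular restrictions} to get $K^\delta(\tau)|_{\pm M} = K^\delta(\pi_\vm)|_{\pm M}$. The hypothesis $\pi \neq 1_r$ forces $K(\pi)$ to be non-trivial on $[r]$; via the correspondence $K(\pi_\vm)|_M = \psi K(\pi) \psi^{-1}$, neither is $K^\delta(\pi_\vm)|_{\pm M}$, so the first condition defining $S_2$ holds. For the no-through-cycle property, the key observation is that $\pi_\vm$ comes from a disc partition $\pi \in \NC(r)$, so the doubled permutation $\delta\pi_\vm^{-1}\delta\pi_\vm$ has no through cycles on the $(m,-m)$-annulus; a block-level inspection (or an Euler-characteristic count of the type in \cite[Def.~8]{kms}) then shows that $K^\delta(\pi_\vm)|_{\pm M}$ itself has no through cycles, placing $\tau$ in $S_2$.

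For the converse inclusion $S_2 \subseteq \tilde{S}_2$, I would invert this recipe. Given $\tau \in S_2$, the permutation $K^\delta(\tau)|_{\pm M}$ has no through cycles and, by the $\delta$-symmetry of $\tau$, restricts to a non-trivial non-crossing permutation $\alpha$ of $M$ paired with $\delta \alpha^{-1}\delta$ on $-M$. Transporting $\alpha$ to $[r]$ via $\psi$ and taking the inverse disc Kreweras complement produces the desired $\pi \in \NC(r)$ with $\pi \neq 1_r$, exactly as in Lemma \ref{lemma:first bijection}. The block $V \in \pi$ is dictated by Lemma \ref{lemma:annular reduction} applied to $\tau$: the through cycles of $\tau$ are contained in $\tilde V \cup \delta(\tilde V)$ for a unique block $\tilde V$, which one then identifies with $V_\vm$ for a uniquely determined $V \in \pi$. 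The dominance $(0_\tau, \tau) \leq (\cU_V, \delta\pi_\vm^{-1}\delta\pi_\vm)$ follows from this containment together with $\tau$ agreeing with $\delta\pi_\vm^{-1}\delta\pi_\vm$ away from $V_\vm \cup \delta(V_\vm)$, while the separation $K^\delta_{\pi_\vm}(\tau) \sep \pm M$ reduces, via \cite[Lemma 6]{mst} applied on $\pm M$, to the identity $K^\delta(\tau)|_{\pm M} = K^\delta(\pi_\vm)|_{\pm M}$ already at hand.

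The main obstacle I expect is to carry through the annular computation of the relative Kreweras complement cleanly. The identity in Lemma \ref{lemma:annular restrictions} was obtained by cancelling factors that act trivially on the complement of $\pm M$; to run that argument in reverse, deducing $K^\delta_{\pi_\vm}(\tau)|_{\pm M} = \id_{\pm M}$ from $K^\delta(\tau)|_{\pm M} = K^\delta(\pi_\vm)|_{\pm M}$, one must verify that the factorisation and triviality hypotheses of \cite[Lemma 6]{mst} apply at each step of the computation, a bookkeeping task that is routine but delicate. Once this is done, uniqueness of $(\pi, V)$ is automatic: $\pi$ is explicitly reconstructed from $K^\delta(\tau)|_{\pm M}$, and $V$ from the through-cycle structure of $\tau$ via the disjoint decomposition in Lemma \ref{lemma:annular reduction}.
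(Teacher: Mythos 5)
Your overall strategy coincides with the paper's: the inclusion $\tilde S_2 \subseteq S_2$ via Lemma \ref{lemma:annular restrictions} is exactly the paper's argument (and your justification that $K^\delta(\pi_\vm)|_{\pm\sM}$ has no through cycles is immediate, since $\pi_\vm^{-1}\gamma_m$ and $\delta\gamma_m^{-1}\delta$ each preserve $[m]$ and $[-m]$ separately), and your reconstruction of $\pi$ from $K^\delta(\tau)|_{\pm\sM}$ via $\psi$ and the inverse disc Kreweras complement is also how the paper proceeds. The problem is in the converse inclusion, where you dispose of the existence of the block $V$ and of the dominance $(0_\tau,\tau)\leq(\cU_\sV,\delta\pi_\vm^{-1}\delta\pi_\vm)$ in one sentence each; these are precisely the two points where the paper has to do real work.

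First, Lemma \ref{lemma:annular reduction} only hands you a subset $\tilde V\subseteq[m]$ such that $\tilde V\cup\delta(\tilde V)$ carries the through cycles of $\tau$; it says nothing about how $\tilde V$ sits relative to the partition $\pi_\vm$ built from $K^\delta(\tau)|_{\pm\sM}$, and a priori $\tilde V$ could meet several blocks of $\pi_\vm$ or only part of one. The paper closes this by a genuinely geometric argument: assuming two distinct blocks $V, V'$ of $\pi_\vm$ meet through cycles of $\tau$, it uses the endpoints of $V$ and the non-crossing condition to force $\tau$ under one of two explicit two-block partitions of $[\pm m]$, and derives a contradiction; only then does one know that the through cycles live over a single block $V$ and that $\tau\vee\delta\pi_\vm^{-1}\delta\pi_\vm=\cU_\sV$. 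Second, the order on partitioned permutations is not orbit containment: $(0_\tau,\tau)\leq(\cU_\sV,\delta\pi_\vm^{-1}\delta\pi_\vm)$ requires the geodesic identity $|(\cU_\sV,\delta\pi_\vm^{-1}\delta\pi_\vm)|=|\tau|+|K^\delta_{\pi_\vm}(\tau)|$, which the paper extracts by combining three length equations (from $K^\delta(\delta\pi_\vm^{-1}\delta\pi_\vm)\leq K^\delta(\tau)$, from $\tau\in S_\NC^\delta(m,-m)$, and from $\pi_\vm\in\NC(m)$) with the count $\#(\cU_\sV)=2\#(\pi_\vm)-1$. Your appeal to ``containment together with $\tau$ agreeing with $\delta\pi_\vm^{-1}\delta\pi_\vm$ away from $V_\vm\cup\delta(V_\vm)$'' is not sufficient (and that agreement is itself not established: the non-through cycles of $\tau$ inside the other blocks $W_\vm$ need not be the cycles of $\pi_\vm$ there). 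Until both points are supplied, the converse inclusion, and hence the lemma, is not proved.
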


\begin{proof}
For convenience, let us recall the definition of $S_2$:
\begin{multline*}
S_2 = \{ \sigma \in S_\NC^\delta(m, -m) \mid
K^\delta(\sigma)|_{\pm M} \not= \id_{\pm M} \mbox{\ but\ }
\\ K^\delta(\sigma)|_{\pm M} \mbox{\ has no through
  cycles\ }\}.
\end{multline*}
Let $\tau \in \tilde S_2$, then by Lemma \ref{lemma:annular
  restrictions} we have $K^\delta(\tau)|_{\pm \sM} =
K^\delta(\pi_\vm)|_{\pm \sM}$. Since $\pi \not = 1_r$ we
cannot have that $K(\pi)$ is trivial. Thus
$K^\delta(\tau)|_{\pm \sM} \not = \id_{\pm\sM}$. In
addition, as $K^\delta(\pi_\vm)|_{\pm \sM}$ has no through
cycles, we have that $K^\delta(\tau)|_{\pm \sM}$ has no
through cycles. Thus $\tau \in S_2$.

Now let $\tau \in S_2$. Let us recall some notation from
Lemma \ref{lemma:first bijection}. By construction
$K^\delta(\tau)|_{\pm M}$ leaves $M$ invariant\footnote{Note
the similarity to \cite[Lemma 20]{mst}}. Let $\gamma_\sM =
(m_1, m_1 + m_2, \dots,\ab m_1 + \cdots + m_r) \in S(M)$ be
the permutation with one cycle and let $\tilde\pi$ be the
inverse Kreweras complement of $K^\delta(\tau)|_\sM$, namely
$\tilde\pi = \gamma_\sM (K^\delta(\tau)|_\sM)^{-1}$, and
finally let $\pi \in S_r$ be given by $\tilde\pi\psi =
\psi\pi$. In \cite[Notation 4]{mst} it was shown that
$K(\tilde\pi)|_\sM \in \NC(M)$, thus $\pi \in \NC(r)$, as
$\psi$ is order preserving. As $K(\pi_\vm)$ acts trivially
on $M^c$, we have $K(\pi_\vm)|_\sM = \psi K(\pi) \psi^{-1}$
and so $K^\delta(\tau)|_\sM = K(\pi_\vm)|_\sM$. By the
symmetry of $\tau$ we also have $K^\delta(\tau)|_{-\sM} =
K^\delta(\delta\pi_\vm^{-1}\delta)|_{-\sM}$. Combining this
with the result on $M$ we have $K^\delta(\tau)|_{\pm\sM} =
K^\delta(\delta\pi_\vm^{-1}\delta \pi_\vm)|_{\pm
  \sM}$. Since $K^\delta(\delta\pi_\vm^{-1}\delta \pi_\vm) =
\delta \gamma_m^{-1} \pi_\vm \delta \pi_\vm^{-1} \gamma_m$
acts trivially on $(\pm M)^c$, we have $K^\delta(\delta
\pi_\vm^{-1} \delta \pi_\vm) \leq K^\delta(\tau)$ in the
sense of \cite[Notation 4]{mst}. This then implies that
\begin{equation*}\label{eq:relative complement}
|K^\delta(\delta \pi_\vm^{-1}\delta\pi_\vm)| + |K^\delta_{\pi_\vm}(\tau)| = |K^\delta(\tau)|.
\end{equation*}
The fact that $\tau \in S_\NC^\delta(m, -m)$ means
\begin{equation*}\label{eq:non-crossing tau}
|\tau| + |K^\delta(\tau)| = |\delta \gamma_m^{-1}\delta \gamma_m| + 2.
\end{equation*}
Finally as $\pi_\vm \in \NC(m)$ we have
\begin{equation*}\label{eq:non-crossing pi}
|\delta \pi_\vm^{-1} \delta \pi_\vm| + |K^\delta( \delta \pi_\vm^{-1} \delta \pi_\vm) | = |\delta \gamma_\vm^{-1} \delta \gamma_\vm|.
\end{equation*}
Putting the last three equations together we have
\begin{align}
|\tau| &   + |K^\delta_{\pi_\vm}(\tau)| = \big(2 + |\delta \gamma_m^{-1} \delta \gamma_m|  - |K^\delta(\tau)|\big) \\
&\qquad\mbox{} + \big(
|K^\delta(\tau)| - |K^\delta(\delta \pi_\vm^{-1} \delta \pi_\vm)| \big)\notag \\ 
& \mbox{} =
2 + |\delta \gamma_m^{-1} \delta \gamma_m| -  |K^\delta(\delta \pi_\vm^{-1} \delta \pi_\vm)| = |\delta \pi_\vm^{-1}\delta \pi_\vm| + 2. \notag
\end{align}

So now we show that there exists a unique $ V \in \pi$ such that
\begin{enumerate}
\item[$(a)$]
$\#(\cU_\sV) = 2 \#(\pi_\vm) - 1$, and

\item[$(b)$]
$\tau \vee K^\delta_{\pi_\vm}(\tau) = \cU_\sV$. 
\end{enumerate}
Property $(a)$ will show that 
\[
|(\cU_\sV, K^\delta(\delta\pi_\vm^{-1} \delta \pi_\vm))| = |\delta \pi_\vm^{-1}\delta \pi_\vm| + 2
= | \tau| + |K^\delta_{\pi_\vm}(\tau)|. 
\]
Combining this with property $(b)$ we will have $(0_\tau,
\tau) \leq (\cU_\sV, \delta \pi^{-1}_\vm \delta \pi_\vm)$;
which implies that $\tau \in \tilde{S}_2$. So let us prove
$(a)$ and $(b)$.

As $\tau$ has through cycles, there must be at least one
block, $V$, of $\pi_\vm$ that meets a through cycle of
$\tau$.  Suppose that $V$ and $V'$ are distinct blocks of
$\pi_\vm$ that meet through cycles of $\tau$. As there two
such blocks we may let $x$ and $y$ be the first and last
elements of $V$. By this we mean that proceeding in cyclic
order, starting at $V'$, we meet $x$ before $y$. Then
$\gamma_m^{-1}(x)$ and $y$ are in the same cycle of
$K^\delta(\delta\pi^{-1}_\vm\delta\pi_\vm) \leq
K^\delta(\tau)$; the same also holds for $-\gamma_m^{-1}(x)$
and $-y$. Thus, considering $\tau$ as a partition, either
\[
\tau \leq 
\{(1, \dots, \gamma_m^{-1}(x), \gamma_m(y), \dots, m, -1, \dots, -m),
(x, \dots, y)\}, \mbox{\ or}
\]
\[
\tau \leq
\{(1, \dots, \gamma_m^{-1}(x), \gamma_m(y), \dots, m), 
(x, \dots, y, -1, \dots, -m)\}.
\]
Since we assumed that $V$ meets a through cycle of $\tau$ we must have 
\[
\tau \leq
\{(1, \dots, \gamma_m^{-1}(x), \gamma_m(y), \dots, m), 
(x, \dots, y, -1, \dots, -m)\}.
\]
As $V' \subseteq (1, \dots, \gamma_m^{-1}(x), \gamma_m(y),
\dots, m)$, we see that $V'$ cannot meet a through cycle of
$\tau$.  Then $\tau$ connects $V$ to $-V$, so $\tau \vee
K^\delta_{\pi_\vm}(\tau) = \tau \vee \delta\pi_\vm^{-1}
\delta \pi_\vm = \cU_\sV$ and $\#(\cU_\sV) = \#(\delta
\pi_\vm^{-1} \delta \pi_\vm) -1 = 2\#(\pi_\vm) -1 $. This
proves the claims $(a)$ and $(b)$. Hence $\tau \in \tilde
S_2$
\end{proof}

\subsection{\normalsize The two parts combined}

To complete the proof of Proposition \ref{prop:first} we
only have to combine these two terms to obtain a proof of
Equation (\ref{eq:two part strategy}):
\[
\partial\kappa_\pi(A_1, \dots, A_r)
=
\sum_{\rho \in N_2}
\partial \kappa_\rho(a_1, \dots, a_m) 
+
\sum_{\tau \in S_2}
\kappa_{\tau/2}(a_1, \dots, a_m),
\]
and then sum over $\pi \in \NC(r) \setminus \{1_r\}$.

\begin{figure}[t]
\includegraphics[width=112.5pt]{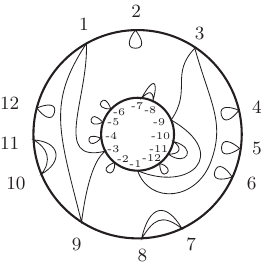}\hfill
\includegraphics[width=112.5pt]{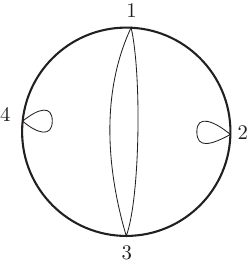} \hfill
\includegraphics[width=112.5pt]{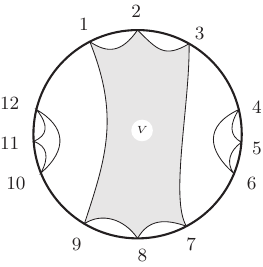}
\caption{\label{fig:two}\small This figure illustrates the
  proof of Lemma \ref{lemma:second bijection}. We have $r =
  4$ with $m_1 = m_2 = m_3 = m_4 = 3$. $\tau$ is on the
  left, $\pi$ is in the centre, $\pi_\vm$ is on the right,
  and $V= \{1,2,3,7,8,9\}$.
\begin{center}
$K^\delta_{\pi_\vm}(\tau) = (1, 2, -8, -3)(-1, 3, 8, -2)
(4, 5, 6)(-6,  -5, -4)(7)(-7)\ab (9)(-9)(10)(-10)(11, 12)(-11, -12)$\\
$K^\delta(\tau) =
(1, 2, -8, -6, -5, -4, -3)(3, 4, 5, 6, 8, -2, -1)(7)(-7)\ab (9,11,12)(-12,-11,-9)(10)(-10)$
\end{center}
Note that $K^\delta(\pi_\vm)|_{\pm \sM} =
K^\delta(\tau)|_{\pm \sM}$, as
$K_{\pi_\vm}^\delta(\tau)|_{\pm \sM} = \id_{\pm
  \sM}$. \hbox{}\hfill$\diamond$}
\end{figure}

\section{The second step: Proposition \ref{prop:third bijection}}
\label{sec:product formula second step}

As noted above, if we can prove Equation (\ref{eq:combined
  formulas}), then we will have concluded the proof of
Theorem \ref{thm:product rule}.  Proposition \ref{prop:third
  bijection} below will provide the proof.  Let us recall
the definition of $S_3$:

\[
S_3 = \{ \sigma \in S_\NC^\delta(m, -m) \mid K^\delta(\sigma)|_{\pm M} 
\mbox{\ has through cycles\ } \}. 
\]

\begin{proposition}\label{prop:third bijection}
\begin{equation}\label{eq:s three}
\sum_{\sigma \in S_\NC^\delta(r, -r)} \kappa_{\sigma/2}(A_1, \dots, A_r)
=
\sum_{\tau \in S_3}
\kappa_{\tau/2}(a_1, \dots, a_m)
\end{equation}
\end{proposition}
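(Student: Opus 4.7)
The plan is to mirror the structure of the proof of Proposition \ref{prop:first}, but now starting with a genuinely annular permutation $\sigma \in S_\NC^\delta(r, -r)$ at the macro level rather than a disc partition $\pi \in \NC(r) \setminus \{1_r\}$. For each such $\sigma$, I would enlarge it to an annular permutation $\sigma_\vm \in S_\NC^\delta(m, -m)$ on the finer index set, apply the classical product formula for free cumulants (Lemma \ref{lemma:zeroth order product}) to each conjugate pair of cycles of $\sigma$, and then track what constraints the resulting refinements $\tau$ must satisfy. The bijective step will match pairs (a choice of $\sigma$ plus a refinement $\tau$) with the elements of $S_3$.

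Concretely, for a cycle $c = (i_1, \dots, i_k, -j_l, \dots, -j_1)$ of $\sigma$, I would replace each positive index $i_s$ with the interval $I_{i_s}$ read in increasing cyclic order and each negative index $-j_t$ with $-I_{j_t}$ read in decreasing order; the conjugate cycle $c'$ is handled by $\delta$-symmetry, and the resulting $\sigma_\vm$ inherits the non-crossing annular and $\delta$-pairing properties from $\sigma$. Applying Lemma \ref{lemma:zeroth order product} to each factor $\kappa_{|c|}(A_{i_1}, \dots, A_{i_k}, A_{j_l}^t, \dots, A_{j_1}^t)$ expands it into a sum over non-crossing partitions $\rho_c$ on the combined $a$-indices assigned to $c$, with the constraint that $\rho_c$ joins all the intervals $I_{i_s}$ and $-I_{j_t}$. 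Pairing $\rho_c$ with its $\delta$-conjugate $\rho_{c'}$, combining over all conjugate pairs, and using the same partitioned-permutation bookkeeping as in the proof of Proposition \ref{prop:first}, we arrive at
\[
\kappa_{\sigma/2}(A_1, \dots, A_r)
=
\mathop{\sum_{\tau \in S_\NC^\delta(m, -m)}}_{
\substack{(0_\tau, \tau) \leq (\cU_{\sigma_\vm}, \sigma_\vm) \\
K^\delta_{\sigma_\vm}(\tau) \sep \pm M}}
\kappa_{\tau/2}(a_1, \dots, a_m),
\]
where $\cU_{\sigma_\vm}$ is the partition whose blocks are the unions of conjugate-pair cycles of $\sigma_\vm$, and $K^\delta_{\sigma_\vm}(\tau) := \delta \sigma_\vm^{-1} \delta \tau^{-1} \sigma_\vm$ is the relative Kreweras complement.

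To conclude, one needs to establish a bijection between pairs $(\sigma, \tau)$ appearing in the summation above and elements $\tau \in S_3$. By an analog of Lemma \ref{lemma:annular restrictions}, the separation condition $K^\delta_{\sigma_\vm}(\tau) \sep \pm M$ forces $K^\delta(\tau)|_{\pm M} = K^\delta(\sigma_\vm)|_{\pm M}$, which has through cycles because $\sigma$ is genuinely annular, so $\tau$ lies in $S_3$. Conversely, given $\tau \in S_3$, the through cycles of $K^\delta(\tau)|_{\pm M}$ determine a unique $\sigma$ via an inverse Kreweras construction on $\pm M$ followed by transport through $\psi$; the genus-zero counting identity
\[
|\tau| + |K^\delta_{\sigma_\vm}(\tau)| = |\sigma_\vm| + 2,
\]
together with $|\sigma_\vm| + |K^\delta(\sigma_\vm)| = |\delta \gamma_m^{-1}\delta \gamma_m| + 2$, then guarantees $(0_\tau, \tau) \leq (\cU_{\sigma_\vm}, \sigma_\vm)$. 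The main obstacle is this last bijective step: unlike Proposition \ref{prop:first}, where $\pi$ was a disc partition with a single distinguished block $V$ to track, here $\sigma$ is already annular so through cycles exist on both sides of the correspondence, and one must check carefully that the interaction between $\sigma$'s through cycles at the macro level and $\tau$'s through cycles at the micro level gives a clean bijection with $S_3$ (including uniqueness of the recovered $\sigma$ from a given $\tau$).
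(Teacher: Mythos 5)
Your overall strategy (expand each macro-level annular cumulant via a product formula, then match the resulting $\tau$'s with $S_3$) is the right one, and you have correctly identified the place where the argument is delicate. But you are missing the tool that makes the paper's proof go through cleanly, namely the flattening result from \cite[Lemma 24]{mvb}: for every $\sigma \in S_\NC^\delta(r,-r)$ there are $1 \leq j < k \leq r$ such that $\sigma$ is non-crossing with respect to $\gamma_\sz = \delta\hat\gamma^{-1}\delta\hat\gamma$, where $\hat\gamma = (1,\dots,j-1,-(k-1),\dots,-j,k,\dots,r)$ is a \emph{single} cycle through a mix of positive and negative indices. Taking $\hat\sigma$ to be the cycles of $\sigma$ lying inside $\hat\gamma$, one gets
\[
\kappa_{\sigma/2}(A_1,\dots,A_r) = \kappa_{\hat\sigma}(A_1,\dots,A_{j-1},A_{k-1}^t,\dots,A_j^t,A_k,\dots,A_r),
\]
which is now an ordinary disc-level cumulant (with some $A$'s transposed). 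The paper then applies the disc product formula of \cite[Thm.\ 15]{mst} \emph{once}, to $\hat\sigma$ and the blown-up reference $\hat\sigma_\vm$, getting a sum over $\tau_\sz \in \NC([\hat m])$ with $\tau_\sz^{-1}\hat\sigma_\vm$ separating $\widehat M$; it sets $\tau = \delta\tau_\sz^{-1}\delta\tau_\sz$ and verifies by a direct group-theoretic computation that $K^\delta(\tau)|_{\pm M} = K^\delta(\sigma_\vm)|_{\pm M}$, which has through cycles, so $\tau \in S_3$.

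Your cycle-by-cycle application of Lemma \ref{lemma:zeroth order product} plus partitioned-permutation bookkeeping is plausible as a starting point, but there are two concrete difficulties you do not resolve. First, the $\rho_c$'s you obtain per conjugate pair live on index sets wrapping around the annulus; after gluing and symmetrizing you must certify that the resulting $\tau$ is actually in $S_\NC^\delta(m,-m)$, and unlike Proposition \ref{prop:first} this is not a simple pasting of disc pieces inside a fixed $\cU_\sV$ --- the through cycles of $\sigma$ twist around both circles. The flattening lemma dissolves exactly this issue by reducing to a single disc picture, after which the $\delta$-symmetrization $\tau = \delta\tau_\sz^{-1}\delta\tau_\sz$ is automatic. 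Second, the ``relative Kreweras complement'' $K^\delta_{\sigma_\vm}(\tau) := \delta\sigma_\vm^{-1}\delta\tau^{-1}\sigma_\vm$ you propose is not the object the paper uses here; in the first step (Prop.\ \ref{prop:first}) that construction made sense because $\pi_\vm$ was a disc permutation, while in this step the separation condition must be read off from $\tau_\sz^{-1}\hat\sigma_\vm\sep\widehat M$ on the flattened side, and the passage to $K^\delta(\tau)|_{\pm M}$ requires the explicit computation with the auxiliary permutations $\rho$ and $\tilde\rho$ in the paper. Your own remark that ``the main obstacle is this last bijective step'' is well placed: without the flattening, the claim that each $\tau \in S_3$ arises from a unique $\sigma$ is exactly what is hardest to see, and the paper's recovery of $\sigma$ from $K^\delta(\tau)|_{\pm M}$ via $\sigma = \gamma_r\,(\psi^{-1}(K^\delta(\tau)|_{\pm M})\psi)^{-1}\,\delta\gamma_r^{-1}\delta$ relies on the clean $K^\delta(\tau)|_{\pm M} = K^\delta(\sigma_\vm)|_{\pm M}$ identity that the flattening makes available.
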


\begin{proof}
It may be helpful to use the example in Figure
\ref{fig:second term} to follow the proof.

Recall from \cite[Lemma 24]{mvb} that for every $\sigma \in
S_\NC^\delta(r, -r)$ there are $1 \leq j < k \leq r$ such
that $\sigma$ is non-crossing with respect to $\gamma_\sz$
where
\[
\gamma_\sz
= (1, \dots, j-1, -(k-1), \dots, -j, k, k+1, \dots, r)
\]
\[
\qquad\mbox{}\times
(-r, \dots, -k, j, j+1, \dots, k-1, -(j-1), \dots, -1).
\]
If we let $\hat\gamma = (1, \dots, j-1, -(k-1), \dots, -j,
k, k+1, \dots, r)$, then $\gamma_\sz = \delta
\hat\gamma^{-1} \delta \hat\gamma$.  We let $\hat\sigma$ be
the permutation consisting of the cycles of $\sigma$
contained in $\hat\gamma$. Then
\[
\kappa_{\sigma/2}(A_1, \dots, A_r) = 
\kappa_{\hat\sigma}(A_1, \dots, A_{j-1}, A_{k-1}^t, \dots, A_j^t, A_k, \dots, A_r). 
\]

Next we let $\sigma_\vm$ be the permutation of $[\pm m]$
defined as follows. For $k \in [m] \setminus M$ we set
$\sigma_\vm(k) = \gamma_m(k)$. For $k \in M$ with $k = m_1 +
\cdots + m_l$ we define \[ \sigma_\vm(k) =
\begin{cases} 
m_1 + \cdots + m_{\sigma(l) -1 } + 1 & \mbox{if\ } \sigma(l) \in [r] \\
-(m_1 + \cdots + m_{-\sigma(l)}) & \mbox{if\ }  \sigma(l) =\in [-r].
\end{cases}
\]
Next for $k \in [-m]\setminus -\gamma_m(M)$ we set
$\sigma_\vm(k) = \delta\gamma_m^{-1}\delta(k)$. If $k \in
-\gamma_m(M)$ with $k = -(m_1 + \cdots + m_{l-1} +1)$ we set
\[
\sigma_\vm(k) =
\begin{cases} 
-(m_1 + \cdots + m_{\sigma(-l)}) & \mbox{if\ } \sigma(-l) \in [-r] \\
m_1 + \cdots + m_{\sigma(-l)-1}+ 1 & \mbox{if\ }  \sigma(-l) \in [r].
\end{cases}
\]
Note that if $l \in [r]$ and $\sigma(l) \in [-r]$ then
$\sigma_\vm(m_1 + \cdots + m_l) =$ $-(m_1 + \cdots +
m_{-\sigma(l)})$ and thus $K^\delta(\sigma_\vm)(-(m_1 +
\cdots + m_{-\sigma(l)})) = m_1 + \cdots + m_l$. So
$K^\delta(\sigma_\vm)|_{\pm M}$ always has a through
cycle. Also $\sigma_\vm = \delta \hat \sigma_\vm^{-1} \delta
\hat\sigma_\vm$.

Let $I_l = \{ m_1 + \cdots + m_{l-1} +1, \dots, m_1 + \cdots
+ m_l\}$ and $J_l = \{ -(m_1\ab + \cdots +m_l), \dots, -(m_1
+ \cdots + m_{l-1}+1)\}$ (note the reversal of order).  We
let $ [\hat m] = I_1 \cup \cdots \cup I_{j-1} \cup J_{k-1}
\cup \cdots \cup J_j \cup I_k \cup \cdots \cup I_r$ and
\begin{multline*}
\widehat{M}
= \{ m_1, \dots, m_1 + \cdots + m_{j-1}, -(m_1 + \cdots + m_{k-2}+1), 
\dots, \\
-(m_1 + \cdots + m_{j-1} + 1), m_1 + \cdots + m_k, \dots, m_1 + \cdots + m_r\}.
\end{multline*}
Note that $\widehat{M} \subseteq [\hat m]$. We let
$\hat\sigma_\vm$ be the restriction of $\sigma_\vm$ to
$[\hat m]$.

\begin{figure}
\setbox1=\hbox{\includegraphics{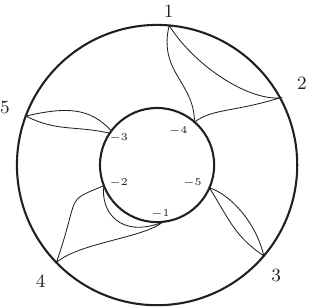}}
\setbox2=\hbox{\includegraphics{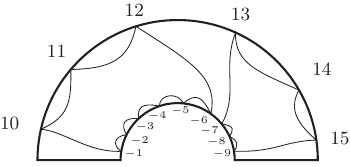}}
\setbox3=\hbox{\includegraphics{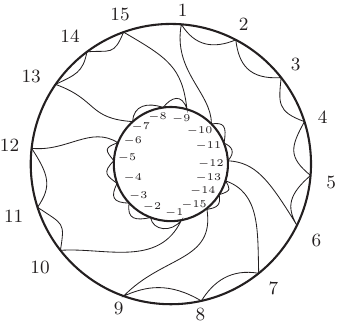}}
\setbox4=\hbox{\includegraphics{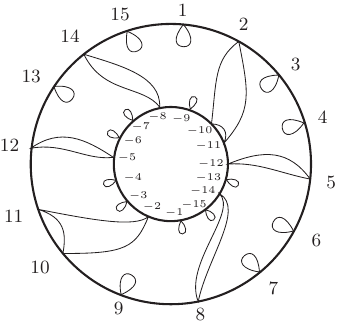}}
\setbox5=\hbox{\includegraphics{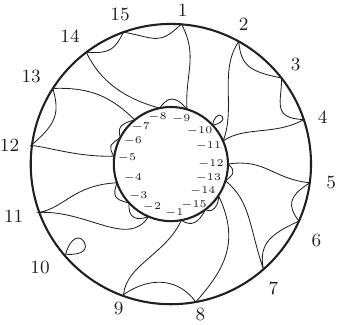}}
\setbox6=\hbox{\includegraphics{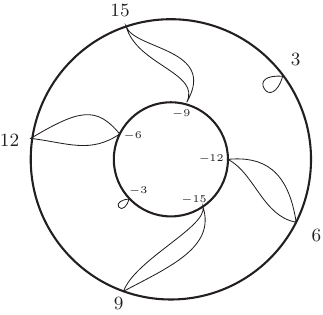}}

\leavevmode
\vbox{\raggedright\hsize\wd1\box1
{\tiny 
\begin{center}$\sigma = (1, 2, -4) (3, -5)(4, -2, -1)(5, -3)$\end{center}}} \hfill
\vbox{\raggedright\hsize\wd2\box2
{\tiny 
\begin{center}$\sigma_\vm = (10, 11, 12, -6, -5, -4, -3, -2, -1)\ab (13, 14, 15, -9, -8, -7)$

$\widehat{M} = \{12, 15, -7, -4, -1 \}$

$\hathat M = \{3, 6, 9, -13, -10\}$
\end{center}}}

\kern-0.4em

\leavevmode
\vbox{\raggedright\hsize\wd3\box3
{\tiny 
\begin{center}$\hat\sigma_\vm = (10, 11, 12, -6, -5, -4, -3, -2, -1)\ab (13, 14, 15, -9, -8, -7)(1, 2, 3, 4, 5, 6, \ab -12, -11, -10)(13, 14, 15, -9, -8, -7)$\end{center}}} 
\hfill
\vbox{\raggedright\hsize\wd4\box4
{\tiny 
\begin{center}$\tau= (1)(2, -11, -10)(3)(4)(5, -12) (6) (7) \ab (8, -14) (9) (10, 11, -2) (12, -5) (13) (14, -8)\ab (-1)(-3)(-4)(-6)(-7) (-9) (-13)(-15)$\end{center}}} 

\kern-0.4em

\leavevmode
\vbox{\raggedright\hsize\wd5\box5
{\tiny 
\begin{center}$K^\delta(\tau)$\end{center}}}
\hfill
\vbox{\raggedright\hsize\wd6\box6
{\tiny 
\begin{center}$K^\delta(\tau)|_{\pm M}$\end{center}}}

\kern-1em 

\caption{\label{fig:second term}\small The six steps in
  Proposition \ref{prop:third bijection}. We are given
  $\sigma$ in the upper left; we construct $\sigma_\vm$ in
  the upper right; then $\hat\sigma_\vm$ in the centre
  left. In the centre right we have a $\tau$ that produces
  the given $\sigma$; $K^\delta(\tau)$ in the lower left,
  and $K^\delta(\tau)|_{\pm M}$ in the lower right.  }
\end{figure}

Since we reversed the order of elements in the
$J$-intervals, we can just say that $\widehat{M}$ consists
of the right hand endpoints of the intervals $\{I_1, \dots,
I_{j-1},\ab J_{k-1}, \dots, J_j, I_k, \dots, I_r\}$.

Thus when we expand $\kappa_{\hat\sigma}(A_1, \dots,
A_{j-1}, A_{k-1}^t, \dots, A_j^t, A_k, \dots, A_r)$ we get
by \cite[Thm.~15]{mst}
\begin{multline}\label{eq:local product expansion} 
\mathop{\sum_{\tau_\sz \in \NC([\hat m])}}_
{\tau_\sz^{-1} \hat\sigma_\vm \sep \widehat M}
\kappa_{\tau_\sz}(a_1, \dots, a_{m_1 + \cdots + m_{j-1}},
a_{m_1 + \cdots + m_{k-1}}^t, \dots, \\ 
a_{m_1 + \cdots + m_{j-1}+1}^t,
a_{m+1 + \cdots + m_{k-1}+1}, \dots, a_{m+1 + \cdots + m_r}).
\end{multline}
Now for $\tau_\sz \in \NC([\hat m])$ with $\tau_\sz^{-1}\hat
\sigma_\vm |_{\widehat M} = \id_{\widehat M}$, we let $\tau
= \delta \tau_\sz^{-1} \delta \tau_\sz$. We claim that
$K^\delta(\tau)|_{\pm M} = K^\delta(\sigma_\vm)|_{\pm
  M}$. This will prove that $\tau \in S_3$.

Note that 
\[
K^\delta(\sigma_\vm)  = \delta \gamma_m^{-1} \hat\sigma_\vm  \delta \cdot \hat\sigma_\vm^{-1} \gamma_m
\mbox{\ and\ } 
K^\delta(\tau)  = \delta \gamma_m^{-1} \tau_\sz \delta \cdot \tau_\sz^{-1}\gamma_m 
\]

\noindent
Now let 
\hfill$
\hathat M = \{
m_1 + \cdots + m_j, \dots, m_1 + \cdots + m_{k-1} \} \ \cup
$\hfill\hbox{}
\[ \{ 
-(m_1 + \cdots + m_{k-1} + 1), \dots, -1, -(m_1 + 1), 
\dots, -(m_1 + \cdots + m_{j-2} + 1)\}
\]
Then $\delta\gamma_m\delta(\pm M) = \widehat{M} \cup \hathat
M$. Let $\rho = \tau_\sz^{-1} \hat\sigma_\vm \delta \tau_\sz
\hat\sigma_\vm^{-1} \delta$. Then $\rho|_{\widehat{M} \cup
  \hathat M} = \id_{\widehat{M} \cup \hathat M}$. So we let
$\tilde\rho = \delta \gamma_m^{-1} \delta \rho \delta
\gamma_m \delta$. Then $\tilde\rho|_{\pm M} = \id_{\pm
  M}$. Next we observe that $\tilde \rho K^\delta(\hat
\sigma_\vm) = K^\delta(\tau)$. Indeed
\begin{align*}
\tilde \rho  K^\delta(\hat \sigma_\vm) 
& =
\delta \gamma_m^{-1} \delta \rho \delta \gamma_m \delta \cdot \delta \gamma_m^{-1}
\hat \sigma_\vm \delta \hat \sigma_\vm^{-1} \gamma_m \\
& =
\delta \gamma_m^{-1} \delta \big[ \tau_\sz^{-1} \hat\sigma_\vm \delta \tau_\sz \hat \sigma_\vm^{-1} \delta \big] \delta \gamma_m \delta \cdot \delta \gamma_m^{-1} \hat\sigma_\vm \delta \hat\sigma_\vm^{-1} \gamma_m \\
& =
\delta \gamma_m^{-1} \delta  \cdot \tau_\sz^{-1} \hat \sigma_\vm \cdot \delta \tau_\sz \hat \sigma_\vm^{-1} \delta \cdot \delta \hat\sigma_\vm \delta \cdot \hat\sigma_\vm^{-1} \gamma_m  \\
& =
\delta \gamma_m^{-1} \delta  \cdot \underbrace{\tau_\sz^{-1} \hat \sigma_\vm} \cdot 
 \underbrace{\delta \tau_\sz  \delta} \cdot \hat\sigma_\vm^{-1} \gamma_m \\
& \stackrel{(*)}{=}
\delta \gamma_m^{-1} \delta  \cdot  \underbrace{\delta \tau_\sz  \delta} \cdot 
\underbrace{\tau_\sz^{-1} \hat \sigma_\vm} \cdot  \hat\sigma_\vm^{-1} \gamma_m = K^\delta(\tau),\\
\end{align*}
where the equality $(*)$ holds because $\tau_\sz^{-1} \hat\sigma_\vm$ and $\delta \tau_\sz \delta$ commute as they act on disjoint subsets of $[\pm m]$. As 
$K^\delta(\hat \sigma_\vm)$ acts trivially on $(\pm M)^c$ (the complement of $\pm M$) we have by \cite[Lemma 6]{mst} that
\[
 K^\delta(\hat \sigma_\vm) |_{\pm M} = \tilde \rho|_{\pm M}  K^\delta(\hat \sigma_\vm) |_{\pm M} = ( \tilde \rho  K^\delta(\hat \sigma_\vm)) |_{\pm M}
= K^\delta(\tau)|_{\pm M}
\]
as claimed. Since $K^\delta(\sigma_\vm)|_{\pm M}$ has a
through cycle (as observed above) we see that
$K^\delta(\tau)|_{\pm M}$ has a through cycle and thus $\tau
\in S_3$.

So now we have shown that the left hand side of (\ref{eq:s
  three}) can be written as a sum over $\tau$'s with each
$\tau$ in $S_3$. We must further show that each $\tau \in
S_3$ occurs once and only once in the expansion
(\ref{eq:local product expansion}). To achieve this we must
show how to recover $\sigma$ from $\tau$, exactly as in the
second part of the proof of Lemma \ref{lemma:second
  bijection}. We will take $\sigma \in S_{\pm r}$ to be the
inverse Kreweras complement of the restriction of the
Kreweras complement of $\tau$ to $\pm M$.

Indeed, we consider $K^\delta(\tau)|_{\pm M}$; this is a
permutation of $\pm M$. Then we conjugate by $\psi:[\pm r]
\rightarrow \pm M$, where $\psi$ is the map $\psi(k) = m_1 +
\cdots + m_k$ and $\psi(-k) = -(m_1 + \cdots + m_k)$ for $k
> 0$. Then $\psi^{-1} (K^\delta(\tau)|_{\pm M}) \psi$ is a
permutation of $[\pm r]$. We seek $\sigma$ such that
$K^\delta(\sigma) = \psi^{-1} (K^\delta(\tau)|_{\pm M})
\psi$. To this end we let
\[
\sigma =  \gamma_r (\psi^{-1} (K^\delta(\tau)|_{\pm M}) \psi)^{-1} \delta \gamma_r^{-1}\delta.  
\]
Then $K^\delta(\sigma) = \psi^{-1} (K^\delta(\tau)|_{\pm M})
\psi$. Thus $K^\delta(\sigma_\vm) |_{\pm M} =
K^\delta(\tau)|_{\pm M}$.  This shows that every term
appears once and only once and this completes the proof of
Proposition \ref{prop:third bijection}. \end{proof}

\thebottomline\end{document}